\theoremstyle{plain}
\newtheorem{theorem}{Theorem}[section]
\newtheorem{lemma}[theorem]{Lemma}
\newtheorem{corollary}[theorem]{Corollary}
\numberwithin{equation}{section}
\theoremstyle{definition}
\newtheorem{remark}[theorem]{Remark}
\newtheorem{example}[theorem]{Example}
\theoremstyle{remark}
\newcommand{\cC}{{\mathcal C}}
\newcommand{\cD}{{\mathcal D}}
\newcommand{\cG}{{\mathcal G}}
\newcommand{\cH}{{\mathcal H}}
\newcommand{\cK}{{\mathcal K}}
\newcommand{\cL}{{\mathcal L}}
\newcommand{\cM}{{\mathcal M}}
\newcommand{\cO}{{\mathcal O}}
\newcommand{\cS}{{\mathcal S}}
\newcommand{\cU}{{\mathcal U}}
\newcommand{\cX}{{\mathcal X}}
\newcommand{\cY}{{\mathcal Y}}
\newcommand{\boldf}{{\mathbf f}}
\newcommand{\bg}{{\mathbf g}}
\newcommand{\by}{{\mathbf y}}
\newcommand{\bz}{{\mathbf z}}
\newcommand{\bA}{{\mathbf A}}
\newcommand{\bC}{{\mathbf C}}
\newcommand{\lam}{\lambda}
\newcommand{\bGamma}{\boldsymbol{\Gamma}}
\newcommand{\sbm}[1]{\left[\begin{smallmatrix} #1
		\end{smallmatrix}\right]}
\begin{document}

\title[Bitangential interpolation]
{The bitangential matrix 
Nevanlinna-Pick interpolation problem revisited}
\author[J.A.\ Ball]{Joseph A. Ball}
\address{Department of Mathematics,
Virginia Tech,
Blacksburg, VA 24061-0123, USA}
\email{joball@math.vt.edu}
\author[V. Bolotnikov]{Vladimir Bolotnikov}
\address{Department of Mathematics,
The College of William and Mary,
Williamsburg VA 23187-8795, USA}
\email{vladi@math.wm.edu}

\dedicatory{Dedicated to Heinz Langer, with respect and admiration}

\begin{abstract}  We revisit four approaches to the BiTangential 
    Operator Argument Nevanlinna-Pick (BTOA-NP) interpolation theorem on the 
    right half plane: (1) the state-space approach of 
    Ball-Gohberg-Rodman, (2) the Fundamental Matrix Inequality 
    approach of the Potapov school, (3) a reproducing kernel space 
    interpretation for the solution criterion, and (4) the 
    Grassmannian/Kre\u{\i}n-space geometry approach of Ball-Helton.  
    These four approaches lead to three distinct solution criteria 
    which therefore must be equivalent to each other.  We give  
    alternative concrete direct proofs of each of these latter 
    equivalences.
    In the final section we show how all the results extend to the 
    case where one seeks to characterize interpolants in the 
    Kre\u{\i}n-Langer generalized Schur class $\cS_{\kappa}$ of 
    meromorphic matrix functions on the right half plane, with the integer 
    $\kappa$ as small as possible.
 \end{abstract}

\subjclass{47A57; 46C20, 47B25, 47B50}
\keywords{bitangential Nevanlinna-Pick interpolation, generalized 
Schur class and Kre\u{\i}n-Langer factorization, maximal negative 
subspace, positive and indefinite kernels, reproducing kernel 
Pontryagin space, Kolmogorov decomposition, linear-fractional 
parametrization}

\maketitle

\section{Introduction}  \label{S:Intro}

The simple-multiplicity case of the BiTangential Nevanlinna-Pick 
(BTNP) Interpolation Problem  over the right half plane $\Pi_{+} = \{ 
z \in {\mathbb C} \colon \operatorname{Re} z > 0\}$ can be formulated 
as follows. Let $\cS^{p \times m}(\Pi_{+})$ denote the Schur class of
$\mathbb C^{p\times m}$-valued functions that are analytic and contractive-valued
on $\Pi_+$:
$$
 \cS^{p \times m}(\Pi_{+}) : = \{ S \colon \Pi_{+} \to {\mathbb C}^{p
 \times m} \colon \| S(\lam) \| \le 1 \text{ for all } \lam \in
 \Pi_{+} \}.
$$
The data set ${\mathfrak D}_{\rm simple}$ for the problem consists of
a collection of the form
\begin{align}
{\mathfrak D}_{\rm simple} =  \{ & z_{i} \in \Pi_{+}, \;  x_{i} \in 
{\mathbb C}^{1 \times p},\; y_{i} \in {\mathbb C}^{1 \times m}\;  
\text{ for }\; i=1,\ldots,N, \notag \\
&w_j\in \Pi_{+}, \;  u_{j} \in {\mathbb C}^{m \times 1},\, v_{j} \in {\mathbb C}^{p
  \times 1}\;  \text{ for } \; j = 1, \dots, N',\notag \\
  & \rho_{ij} \in {\mathbb C} \text{ for } (i,j) \text{ such that }
  z_{i} = w_{j} =: \xi_{ij}\}. \label{babydata}
  \end{align}
  The problem then is to find a function $S\in \cS^{p \times m}(\Pi_{+})$ 
that satisfies the collection of interpolation conditions
\begin{align}
  &  x_{i} S(z_{i}) = y_{i} \;  \text{ for } \; i = 1, \dots, N, 
    \label{inter1'} \\
 &  S(w_{j}) u_{j} = v_{j}\; \text{ for } \; j = 1, \dots, N', 
 \label{inter2'} \\
 & x_{i} S'(\xi_{ij}) y_{j} = \rho_{ij} \; \text{ for } (i,j) \; \text{ 
 such that } \; z_{i} = w_{j} =: \xi_{ij}.  \label{inter3'}
\end{align}
We note that the existence of a solution $S$ 
to interpolation conditions \eqref{inter1'}, \eqref{inter2'}, 
\eqref{inter3'} forces the data set \eqref{babydata} to satisfy 
additional compatibility equations; indeed, if $S$ solves 
\eqref{inter1'}--\eqref{inter3'}, and if $(i,j)$ 
is a pair of indices where $z_{i} = w_{j} =: \xi_{ij}$, then the 
quantity $x_{i} S(\xi_{ij}) u_{j}$ can be computed in two ways:
\begin{align*}
    & x_{i} S(\xi_{ij}) u_{j} = (x_{i} S(\xi_{ij})) u_{j} = y_{i} 
    u_{j}, \\
& x_{i} S(\xi_{ij}) u_{j} = x_{i} (S(\xi_{ij}) u_{j}) = x_{i} v_{j}
\end{align*}
forcing the compatibility condition
\begin{equation}   \label{babycomp}
    x_{i} v_{j} = y_{i} u_{j} \; \text{ if } \; z_{i} = w_{j}.
\end{equation}
Moreover, there is no loss of generality in assuming that each row 
vector $x_{i}$ and each column vector $u_{j}$ in \eqref{babydata} is nonzero;
if $x_{i}= 0$ for some $i$, existence of a solution $S$ then forces also that 
$y_{i}=0$ and then the interpolation condition $x_{i} S_{i}(z_{i}) = 
y_{i}$ collapses to $0 = 0$ and can be discarded, with a similar 
analysis in case some $u_{j} = 0$. 

\smallskip

The following result gives the precise solution criterion.  The 
result actually holds even without the normalization conditions on the data 
set discussed in the previous paragraph.

\begin{theorem}  \label{T:BT-NP}
{\rm (See \cite[Section 4]{LA} for the case where $z_{i} \ne w_{j}$ for all $i,j$).} 
Given a data set ${\mathfrak  D}_{\rm simple}$ as in \eqref{babydata}, there 
exists a solution $S$ of the associated problem {\rm BTNP} if and only if the 
associated Pick matrix
   \begin{equation}  \label{Pick-simple}
       P_{{\mathfrak D}_{\rm simple}} : = \begin{bmatrix} 
    P_{11} & P_{12} \\ P_{12}^{*} & P_{22} \end{bmatrix}
    \end{equation}
    with entries given by
    \begin{align*}
&	[P_{11}]_{ij} = \frac{x_{i} x_{j}^{*} - y_{i} 
y_{j}^{*}}{z_{i} + \overline{z}_{j}} \; \text{ for } \; 1 \le i,j \le N, \\
&  [P_{12}]_{ij} = \left\{\begin{array}{cl}
 {\displaystyle\frac{x_{i}v_{j} - y_{i}u_{j}}{ w_{j}-z_{i}}} & \text{ if } \; 
 z_{i} \ne w_{j},  \\ \rho_{i j} & \text{ if } z_{i} = w_{j}, \end{array}\right. 
 \text{ for } 1  \le i \le N, 1 \le j \le N', \\
& [ P_{22}]_{ij} = \frac{ u_{i}^{*} u_{j} - 
v_{i}^{*} v_{j}}{  \overline{w}_{i} +  w_{j} } \; \text{ for } \; 1 \le 
i, j \le N',
\end{align*}
is positive semidefinite.
\end{theorem}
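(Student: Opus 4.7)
The plan is to handle the two directions separately: \emph{necessity} of positive semidefiniteness of the Pick matrix by exhibiting it as a concrete Gram matrix inside a single reproducing kernel Hilbert space attached to $S$, and \emph{sufficiency} by constructing a rational $J$-inner coefficient matrix from the Pick data and using the associated linear-fractional parametrization.

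For necessity, I would work inside the de~Branges--Rovnyak space $\cH(K_S)$ with reproducing kernel
\[
K_S(z,w) = \frac{I_p - S(z)S(w)^*}{z + \overline{w}},
\]
which is a positive kernel on $\Pi_+$ precisely because $S \in \cS^{p \times m}(\Pi_+)$. For each left-sided interpolation node I take the reproducing element $f_i(z) = K_S(z, z_i) x_i^*$; by \eqref{inter1'} the inner products $\langle f_j, f_i\rangle_{\cH(K_S)}$ produce the block $P_{11}$. For each right-sided node I use the divided-difference element
\[
g_j(z) = \frac{S(z) - S(w_j)}{z - w_j}\,u_j,
\]
which is analytic on all of $\Pi_+$ because $S(w_j) u_j = v_j$. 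The technical core of the necessity step is to verify that each $g_j$ lies in $\cH(K_S)$ and to compute the mixed pairings $\langle g_j, f_i\rangle$ and $\langle g_l, g_j\rangle$. The reproducing property gives $\langle g_j, f_i\rangle = x_i\, g_j(z_i)$ immediately; writing this out and handling $z_i \ne w_j$ and $z_i = w_j$ uniformly via the divided difference recovers $[P_{12}]_{ij}$, the coincident case producing exactly $x_i S'(\xi_{ij}) u_j = \rho_{ij}$. A parallel computation for $\langle g_l, g_j\rangle$ produces $[P_{22}]_{lj}$. Hence $P_{{\mathfrak D}_{\rm simple}}$ is the Gram matrix of $\{f_1, \ldots, f_N, g_1, \ldots, g_{N'}\}$ and is positive semidefinite.

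For sufficiency, given $P_{{\mathfrak D}_{\rm simple}} \geq 0$ I would follow the Ball--Gohberg--Rodman blueprint. First, assemble the data into matrices $C_\pi, D_\pi, Z = \operatorname{diag}(z_i)$ on the left and $B_\sigma, E_\sigma, W = \operatorname{diag}(w_j)$ on the right, and verify that the three Pick blocks are characterized by the coupled Sylvester/Lyapunov equations
\[
Z P_{11} + P_{11} Z^* = C_\pi C_\pi^* - D_\pi D_\pi^*, \qquad W^* P_{22} + P_{22} W = B_\sigma^* B_\sigma - E_\sigma^* E_\sigma,
\]
\[
Z P_{12} - P_{12} W = C_\pi E_\sigma - D_\pi B_\sigma.
\]
These identities permit the explicit construction of a rational $(p+m) \times (p+m)$-valued $J$-inner function $\Theta(z)$ (for $J = I_p \oplus (-I_m)$) in state-space form, with state matrix built from $\operatorname{diag}(Z, -W^*)$, input/output matrices built from the tangential vectors, and Lyapunov gramian $P_{{\mathfrak D}_{\rm simple}}$; $J$-innerness of $\Theta$ then reduces to a purely algebraic identity equivalent to the Sylvester equations just displayed. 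Conformally partitioning $\Theta$ into blocks $\Theta_{11}, \Theta_{12}, \Theta_{21}, \Theta_{22}$ of the obvious sizes, every function
\[
S(z) = \bigl(\Theta_{11}(z) G(z) + \Theta_{12}(z)\bigr)\bigl(\Theta_{21}(z) G(z) + \Theta_{22}(z)\bigr)^{-1}, \qquad G \in \cS^{p \times m}(\Pi_+),
\]
is automatically a Schur-class solution; the interpolation conditions \eqref{inter1'}--\eqref{inter3'} are then verified by expanding $\Theta$ about each interpolation node and matching Laurent coefficients.

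The main obstacle I anticipate is the \emph{degenerate} case in which $P_{{\mathfrak D}_{\rm simple}}$ has nontrivial kernel. On the necessity side, proving that $g_j$ actually belongs to $\cH(K_S)$ and computing its norm is genuine work since $g_j$ is not a reproducing element, and the Gram-matrix identification becomes delicate when the resulting linear combinations are not linearly independent. On the sufficiency side, the state-space construction of $\Theta$ is singular on $\ker P_{{\mathfrak D}_{\rm simple}}$, so one must either pre-compress the data to $(\ker P_{{\mathfrak D}_{\rm simple}})^\perp$ and justify via the compatibility relation \eqref{babycomp} that the reduced problem has the same solution set as the original, or else work throughout with a Moore--Penrose pseudoinverse. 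Because the theorem is stated without either the normalization $x_i \ne 0$, $u_j \ne 0$ or distinctness of $\{z_i\} \cup \{w_j\}$, the degenerate cases must be confronted rather than sidestepped, and this is where the proof will require the most care.
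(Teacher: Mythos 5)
Your necessity argument has a genuine flaw in the $P_{22}$ block: the claimed equality $\langle g_l, g_j\rangle_{\cH(K_S)}=[P_{22}]_{lj}$ is false in general.  The simplest counterexample is $S\equiv 0$: then $v_j=S(w_j)u_j=0$, so every $g_j(z)=\frac{S(z)u_j-v_j}{z-w_j}$ is identically zero and all inner products $\langle g_l,g_j\rangle_{\cH(K_S)}$ vanish, while $[P_{22}]_{jj}=\frac{u_j^*u_j}{2\operatorname{Re}w_j}>0$ whenever $u_j\neq 0$.  What is true is only the Loewner inequality $\bigl[\langle g_j,g_l\rangle_{\cH(K_S)}\bigr]_{l,j}\preceq P_{22}$, because $g_j$ is the image of the dual kernel element $\widetilde K_S(\cdot,\overline w_j)u_j\in\cH(\widetilde K_S)$, with $\widetilde K_S(z_*,\zeta_*)=\frac{I_m-S(\overline z_*)^*S(\overline\zeta_*)}{z_*+\overline\zeta_*}$, under a contractive (not isometric) coupling map into $\cH(K_S)$.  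The paper sidesteps this by never working inside the one-component space: it uses the $2\times 2$ block kernel $\mathbf{K}_S$ of \eqref{ker22}, whose positivity on $(\Pi_+\times\Pi_+)\times(\Pi_+\times\Pi_+)$ characterizes $S$ being Schur, evaluates it at $z_1,\dots,z_N,\overline w_1,\dots,\overline w_{N'}$ to produce the manifestly positive block matrix \eqref{Pick-pre}, and conjugates by $\operatorname{diag}(x_1,\dots,x_N,u_1^*,\dots,u_{N'}^*)$ to reach $P_{{\mathfrak D}_{\rm simple}}\succeq 0$.  Your route can be patched, since positive-semidefiniteness of the genuine Gram matrix of $\{f_i,g_j\}$ together with $P_{22}-\bigl[\langle g_j,g_l\rangle\bigr]\succeq 0$ still yields $P_{{\mathfrak D}_{\rm simple}}\succeq 0$; but as written the displayed identity fails, and the cleanest repair is exactly the two-component kernel the paper uses, since proving the contractivity of the coupling map already requires it.

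On sufficiency you and the paper take the same state-space road: the paper obtains Theorem~\ref{T:BT-NP} as the special case of Theorem~\ref{T:BTOA-NP} with data aggregated as in \eqref{babydata'}.  The one divergence worth flagging is the degenerate case, where the paper perturbs $P_{{\mathfrak D}_{\rm simple}}$ to a strictly positive-definite Pick matrix and passes to a normal-families limit; this is lighter than the compression-to-$(\ker P_{{\mathfrak D}_{\rm simple}})^{\perp}$ or pseudoinverse route you suggest, since it avoids re-identifying what interpolation problem the compressed data encode.
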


Given a data set ${\mathfrak D}_{\rm simple}$ as above, 
it is convenient to repackage it in a more aggregate form 
as follows (see \cite{BGR}). With data as in  \eqref{babydata}, 
form the septet of matrices
$(Z,X,Y, W, U, V, \Gamma)$ where:
\begin{align}
 & Z = \begin{bmatrix} z_{1} & & 0 \\ & \ddots & \\ 0 & & z_{N} 
\end{bmatrix}, \; \; 
 X = \begin{bmatrix} x_{1} \\ \vdots \\ x_{N} \end{bmatrix}, \; \;  
 Y = \begin{bmatrix} y_{1} \\ \vdots \\  y_{N} \end{bmatrix}, \notag \\
& W = \begin{bmatrix} w_{1} & & 0 \\ & \ddots & \\0 & & w_{N'} 
\end{bmatrix}, \; \; 
U = \begin{bmatrix} u_{1} & \cdots & u_{N'} \end{bmatrix}, \; \;
V = \begin{bmatrix} v_{1} & \cdots & v_{N'} \end{bmatrix}, \notag \\
& \Gamma = \left[ \gamma_{ij} \right]_{i=1,\ldots N}^
{j=1,\ldots,N^\prime} \; \text{ where } \; 
\gamma_{ij} =  \left\{\begin{array}{cl}
 {\displaystyle\frac{x_{i}v_{j} - y_{i}u_{j}}{ w_{j}-z_{i}}} & \text{ if } \;
 z_{i} \ne w_{j},  \\ \rho_{i j} & \text{ if } z_{i} = w_{j}. 
\end{array}\right.
\label{babydata'}
\end{align}
Note that the compatibility condition \eqref{babycomp} translates to 
the fact that $\Gamma$ satisfies the Sylvester equation
$$
\Gamma W-Z\Gamma   = \begin{bmatrix} X & -Y \end{bmatrix} 
    \begin{bmatrix} V \\ U \end{bmatrix}.
$$
The normalization requirements ($x_{i} \ne 0$ for all $i$ and $u_{j} 
\ne 0$ for all $j$ together with $z_{1}, \dots, z_{N}$ all distinct 
and $w_{1}, \dots, w_{N'}$ all distinct)  translate to the conditions
$$
 (Z,X) \text{ is controllable}, \quad (U, W) \text{ is observable.}
$$
Then it is not hard to see that the interpolation conditions 
\eqref{inter1'}, \eqref{inter2'}, \eqref{inter3'} can be written in 
the more aggregate form 
 \begin{align} 
	& \sum_{z_{0} \in \sigma(Z)} {\rm Res}_{\lambda = z_{0}} (\lam I 
	- Z)^{-1} X  S(\lam) = Y, \label{resint1} \\
& \sum_{z_{0} \in \sigma(W)} {\rm Res}_{\lambda = z_{0}} S(\lam) U (\lam 
I - W)^{-1} = V, \label{resint2} \\
& \sum_{z_{0} \in \sigma(Z)\cup \sigma(W)} {\rm Res}_{\lam = z_{0}} 
(\lam I - Z)^{-1} 
X S(\lam) U (\lam I - W)^{-1} = \Gamma. \label{resint3}
\end{align}

Suppose that $(Z,X)$ is any controllable input pair and that $(U,W)$ 
is an observable output pair.  Assume in addition that $\sigma(Z) 
\cup \sigma(W) \subset \Pi_{+}$ and that $S$ is an analytic matrix 
function (of appropriate size) on $\Pi_{+}$.  We define the 
\textbf{Left-Tangential Operator Argument (LTOA) point evaluation} 
$(XS)^{\wedge L}(Z)$ of $S$ at $Z$ in left direction $X$ by
$$
  (X S)^{\wedge L}(Z) = \sum_{z_{0}\in\sigma(Z)} {\rm Res}_{\lam = z_{0}}
  (\lam I - Z)^{-1} X  S(\lam).
$$
Similarly we define the \textbf{Right-Tangential Operator Argument 
(RTOA) point evaluation} $(S U)^{\wedge R}(W)$ of $S$ at $W$ in right 
direction $U$ by
$$
  (S U)^{\wedge R}(W) = \sum_{z_{0} \in \sigma(W)} {\rm Res}_{\lam = z_{0}}
  S(\lam) U (\lam I - W)^{-1}.
$$
Finally the \textbf{BiTangential Operator Argument (BTOA) point 
evaluation} $(X S U)^{\wedge L,R}(Z, W)$ of $S$ at left argument $Z$ 
and right argument $W$ in left direction $X$ and right direction $U$ 
is given by
$$
(X S U)^{\wedge L,R}(Z, W) =
\sum_{z_{0} \in \sigma(Z)\cup \sigma(W)} {\rm Res}_{\lam = z_{0}}
(\lam I - Z)^{-1} X S(\lam) U (\lam I - W)^{-1}.
$$
With this condensed notation, we write the interpolation conditions 
\eqref{resint1}, \eqref{resint2}, \eqref{resint3} simply as
\begin{align} 
    & (X S)^{\wedge L}(Z) = Y, \label{BTOAint1} \\
    & (S U)^{\wedge R} (W) = V, \label{BTOAint2} \\
    & (X S U)^{\wedge L,R}(Z,W) = \Gamma. \label{BTOAint3}
\end{align}
Let us say that the data set 
\begin{equation}   \label{data}
    {\mathfrak D} = (Z,X,Y; U,V,W; \Gamma)
\end{equation}
is a {\em $\Pi_{+}$-admissible BiTangential Operator Argument (BTOA)
interpolation data set} 
if the following conditions hold:
\begin{enumerate}
    \item Both $Z$ and  $W$ have spectrum inside $\Pi_{+}$:  
    $\sigma(Z) \cup \sigma(W) \subset \Pi_{+}$.
    \item $(Z,X)$ is controllable and $(U,W)$ is observable.
    \item $\Gamma$ satisfies the Sylvester equation
 \begin{equation}   \label{Sylvester'}
    \Gamma W - Z \Gamma = X V - Y U.
 \end{equation}
  \end{enumerate}
 Then it makes sense to consider the collection of interpolation 
 conditions \eqref{BTOAint1}, \eqref{BTOAint2}, \eqref{BTOAint3} for 
 {\em any} $\Pi_{+}$-admissible BTOA interpolation data set $(Z,X,Y; 
 U,V,W; \Gamma)$.  It can be shown that these interpolation 
 conditions can be expressed equivalently as a set of 
 higher-order versions of the  interpolation conditions 
 \eqref{inter1'}, \eqref{inter2'}, \eqref{inter3'}
 (see \cite[Theorem 16.8.1]{BGR}), as well as a representation of $S$ in 
 the so-called Model-Matching form (see \cite[Theorem 16.9.3]{BGR}, 
 \cite{Francis})
 $$
   S(\lam) = T_{1}(\lam) + T_{2}(\lam) Q(\lam) T_{3}(\lam),
 $$
 where $T_{1}$, $T_{2}$, $T_{3}$ are rational matrix functions 
 analytic on $\Pi_{+}$ with $T_{2}$ and $T_{3}$ square and analytic 
 and invertible along the imaginary line, and where $Q$ is a 
 free-parameter matrix function analytic on all of $\Pi_{+}$. 
 
\smallskip

 It is interesting to note that the Sylvester equation 
 \eqref{Sylvester'} is still necessary for the existence of a 
 $p \times m$-matrix function $S$ analytic on $\Pi_{+}$ satisfying the 
 BTOA interpolation conditions \eqref{BTOAint1}, \eqref{BTOAint2}, 
 \eqref{BTOAint3}.  Indeed, note that
  \begin{align*}
 &   \left( (\lam I - Z)^{-1} X S(\lam) 
  U (\lam I - W)^{-1}\right) W -
  Z \left( (\lam I - Z)^{-1} X S(\lam) U (\lam I - W)^{-1} \right) \\
  & = (\lam I - Z)^{-1} X S(\lam) 
  U (\lam I - W)^{-1} (W - \lambda I + \lam I) \\
  &  \quad \quad + (\lam I - Z - \lam I)
  (\lam I - Z)^{-1} X S(\lam) U (\lam I - W)^{-1}  \\
  & = -(\lam I - Z)^{-1} X S(\lam)  U  +
  \lam \cdot  (\lam I - Z)^{-1} X S(\lam)  U (\lam I - W)^{-1} \\
  & \quad \quad + X S(\lam) U (\lam I - W)^{-1} - 
  \lam \cdot  (\lam I - Z)^{-1} X S(\lam)  U (\lam I - W)^{-1} \\
  & =  -(\lam I - Z)^{-1} X S(\lam)  U  + X S(\lam) U (\lam I - 
  W)^{-1}.
 \end{align*}
 If we now take the sum of the residues of the first and last 
 expression in this chain of equalities over points $z_{0} \in 
 \Pi_{+}$ and use the interpolation conditions 
 \eqref{resint1}--\eqref{resint3}, we arrive at
 $$
  \Gamma W - Z \Gamma = -Y U + X V
 $$ 
 and the Sylvester equation \eqref{Sylvester'} follows.
 
\smallskip

 We now pose the \textbf{BiTangential Operator Argument Nevanlinna-Pick 
 (BTOA-NP) Interpolation Problem}:  Given a $\Pi_{+}$-admissible BTOA 
 interpolation data set \eqref{data}, find $S$ in the 
 matrix Schur class over the right half plane $\cS^{p \times 
 m}(\Pi_{+})$ which satisfies the BTOA interpolation conditions 
 \eqref{BTOAint1}, \eqref{BTOAint2}, \eqref{BTOAint3}.  
 
\smallskip

 Before formulating the solution, we need some additional 
 notation.  Given a $\Pi_{+}$-admissible BTOA interpolation data set 
 \eqref{data},  introduce two additional matrices $\Gamma_{L}$ and 
 $\Gamma_{R}$ as the unique solutions of the respective Lyapunov 
 equations
 \begin{align}
& \Gamma_{L} Z^{*} + Z \Gamma_{L} = X X^{*} - Y Y^{*}, \label{GammaL}\\
& \Gamma_{R} W + W^{*} \Gamma_{R} = U^{*} U - V^{*}V.      \label{GammaR} 
 \end{align}
 We define the  BTOA-Pick matrix $\bGamma_{\mathfrak D}$ associated 
 with the data set \eqref{data} by
 \begin{equation}   \label{GammaData}
     \bGamma_{\mathfrak D} = \begin{bmatrix} \Gamma_{L} & \Gamma
     \\ \Gamma^{*} & \Gamma_{R} \end{bmatrix}.
 \end{equation}
 The following is the canonical generalization of Theorem \ref{T:BT-NP} 
 to this more 
 general situation.
 
 \begin{theorem}  \label{T:BTOA-NP}  Suppose that 
 $$
 {\mathfrak D}  = ( X,Y,Z; U,V, W; \Gamma)
 $$
 is a $\Pi_{+}$-admissible BTOA interpolation data set.   Then there 
 exists a solution $S \in \cS^{p \times m}(\Pi_{+})$ of the BTOA-NP 
 interpolation problem associated with data set ${\mathfrak D}$ if 
 and only if the associated  BTOA-Pick matrix $\bGamma_{\mathfrak D}$
 defined by \eqref{GammaData} is positive semidefinite.
 
 In case $\Gamma_{\mathfrak D}$ is strictly positive definite 
 ($\Gamma_{\mathfrak D} \succ 0$), the set of all solutions is 
 parametrized as follows.  Define a $(p+m) \times (p+m)$-matrix 
 function 
 $$
 \Theta(\lam) =  \begin{bmatrix} \Theta_{11}(\lam) & 
 \Theta_{12}(\lam) \\ \Theta_{21}(\lam) & \Theta_{22}(\lam) 
\end{bmatrix}
$$ via
\begin{equation}  \label{Theta}
\Theta(\lam) = \begin{bmatrix} I_{p} & 0 \\ 0 & I_{m} \end{bmatrix} +
\begin{bmatrix}-X^{*} & V  \\ -Y^{*} & U  \end{bmatrix}
    \begin{bmatrix} \lam I + Z^{*} & 0 \\ 0 & \lam I - W 
    \end{bmatrix}^{-1} \bGamma_{\mathfrak D}^{-1}  
\begin{bmatrix} X & -Y \\-V ^{*} & U^{*}  \end{bmatrix}. 
\end{equation}
Then $S$ is a solution of the BTOA-NP interpolation problem if and 
only if $S$ has a representation as
\begin{equation}   \label{LFTparam}
S(\lambda) = (\Theta_{11}(\lam) G(\lam) + \Theta_{12}(\lam))
( \Theta_{21}(\lam) G(\lam) + \Theta_{22}(\lam))^{-1}
\end{equation}
where $G$ is a free-parameter function in the 
Schur class $\cS^{p \times m}(\Pi_{+})$.
\end{theorem}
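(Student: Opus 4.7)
My plan is to prove both parts of Theorem \ref{T:BTOA-NP} via the Fundamental Matrix Inequality of the Potapov school, whose backbone is a Potapov-style $J$-identity for the candidate resolvent matrix $\Theta$ defined by \eqref{Theta}. Throughout, let $J = \sbm{I_p & 0 \\ 0 & -I_m}$.

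\textbf{Necessity.} Suppose $S \in \cS^{p \times m}(\Pi_+)$ solves the BTOA interpolation conditions, and consider the de Branges--Rovnyak kernel $K_S(\lam,\mu)=(I_p - S(\lam)S(\mu)^*)/(\lam + \overline\mu)$ together with its dual $\widetilde K_S(\lam,\mu)=(I_m - S(\mu)^*S(\lam))/(\lam + \overline\mu)$, both positive on $\Pi_+$. Using \eqref{BTOAint1}--\eqref{BTOAint3} together with the Sylvester equation \eqref{Sylvester'} and the Lyapunov equations \eqref{GammaL}--\eqref{GammaR}, I would identify $\bGamma_{\mathfrak D}$ as the Gram matrix of an explicit finite family of vectors living in the Hilbert-space direct sum $\cH(K_S)\oplus\cH(\widetilde K_S)$. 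Equivalently, one writes each block of $\bGamma_{\mathfrak D}$ as a convergent contour integral along $\partial\Pi_+$ of a positive quantity (using residue calculus and controllability/observability). Positive semidefiniteness of $\bGamma_{\mathfrak D}$ follows at once.

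\textbf{Sufficiency and parametrization, strict case.} Assume $\bGamma_{\mathfrak D} \succ 0$. The core computation is the Potapov identity
\[
J - \Theta(\lam)\,J\,\Theta(\mu)^* = (\lam + \overline\mu)\, \cC(\lam)\, \bGamma_{\mathfrak D}^{-1}\, \cC(\mu)^*,
\]
where $\cC(\lam) = \sbm{-X^* & V \\ -Y^* & U}\sbm{(\lam I + Z^*)^{-1} & 0 \\ 0 & (\lam I - W)^{-1}}$, together with its dual on the right. This is a routine (but bookkeeping-heavy) algebraic check that plugs in the definition \eqref{Theta} and invokes precisely \eqref{Sylvester'}, \eqref{GammaL}, \eqref{GammaR}. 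It shows that $\Theta$ is $J$-inner on $\Pi_+$: $\Theta(\lam)J\Theta(\lam)^*\preceq J$ on $\Pi_+$ with equality on $i\mathbb R$. From here, standard Krein-space LFT arguments give (i) the denominator $\Theta_{21}G + \Theta_{22}$ is invertible on $\Pi_+$ for each $G \in \cS^{p\times m}(\Pi_+)$; (ii) the function $S$ produced by \eqref{LFTparam} lies in $\cS^{p\times m}(\Pi_+)$; and (iii) $S$ inherits the interpolation conditions, since the singularities of $\Theta$ inside $\Pi_+$ are located at $\sigma(Z)\cup\sigma(W)$ with residues engineered from $(X,Y,U,V,\Gamma)$ so as to match \eqref{BTOAint1}--\eqref{BTOAint3}.

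\textbf{Converse of the parametrization and the semidefinite case.} The harder direction is to show that \emph{every} solution $S$ arises from some $G \in \cS^{p\times m}(\Pi_+)$ via \eqref{LFTparam}. I would invert \eqref{LFTparam} to define a candidate $G$ and then show $G \in \cS^{p\times m}(\Pi_+)$ by comparing reproducing kernel spaces: the $J$-identity provides a contractive inclusion $\cH(K_\Theta) \hookrightarrow \cH(K_S)$, and a dimension/rank count (using $\bGamma_{\mathfrak D} \succ 0$ together with controllability/observability) ensures no ``Schur defect'' survives. I expect this step to be the principal obstacle: it is precisely the point at which the four approaches surveyed in the paper diverge, and it requires some genuine operator-theoretic input (Krein-space angle operators or Kolmogorov decomposition) rather than pure algebra. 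Finally, to handle the positive-semidefinite but non-strict case, I would perturb $\bGamma_{\mathfrak D}$ to $\bGamma_{\mathfrak D}+\epsilon I \succ 0$, apply the strict case to obtain solutions $S_\epsilon$, and extract a locally uniform limit $S$ as $\epsilon \downarrow 0$ by the normal-families property of $\cS^{p\times m}(\Pi_+)$; the interpolation conditions pass to the limit because they are given by residues at fixed points of $\Pi_+$.
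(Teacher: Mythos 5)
Your necessity sketch and the Potapov $J$-identity are in the right spirit and align with the computations \eqref{KS2}--\eqref{aug4} and \eqref{ad1} in the paper, but there are two substantive problems with the sufficiency half of your plan.

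First, a factual slip: from the definition \eqref{Theta} one has $\Theta(\lam) = I - \bC(\lam I - \bA)^{-1}\bGamma_{\mathfrak D}^{-1}\bC^*J$ with $\bA = \sbm{-Z^* & 0 \\ 0 & W}$, so since $\sigma(-Z^*)\subset\Pi_-$ the only singularities of $\Theta$ in $\Pi_+$ are at $\sigma(W)$, not at $\sigma(Z)\cup\sigma(W)$. Consequently your proposed argument ``(iii)'' --- that $S=(\Theta_{11}G+\Theta_{12})(\Theta_{21}G+\Theta_{22})^{-1}$ inherits the interpolation conditions \eqref{BTOAint1}--\eqref{BTOAint3} because of engineered residues of $\Theta$ --- does not work as stated. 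Encoding the interpolation conditions in $\Theta$ requires the subspace criterion \eqref{sol-rep} (that $S$ interpolates iff $\sbm{S \\ I_m}\cM_{{\mathfrak D},-}\subset\cM_{\mathfrak D}$) together with the Beurling--Lax identification $\cM_{\mathfrak D}=\Theta\cdot H^2_{p+m}(\Pi_+)$; a direct residue calculation from the pole structure of $\Theta$ alone is not available.

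Second, and more seriously, the converse direction of the parametrization --- that \emph{every} Schur-class interpolant has the form \eqref{LFTparam} --- is precisely the technical crux, and ``a dimension/rank count'' with ``Kre\u{\i}n-space angle operators or Kolmogorov decomposition'' does not resolve it. The paper deals with it by one of three concrete devices: a winding-number argument showing $\operatorname{wno}\det Q_2 = 0$ via the identity \eqref{wno''} (state-space route); Leech's theorem applied to a Schur complement of the augmented Pick kernel $\bGamma_{\mathfrak D}(z,\zeta)$ obtained by adding tautological interpolation nodes (the genuine FMI route of Section \ref{S:FMI}); or Lemma \ref{L:maxneg} on when $\cM_{\mathfrak D}$-maximal-negative subspaces are $\cK$-maximal-negative (the Grassmannian route of Section \ref{S:proof2}). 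Your ``contractive inclusion $\cH(K_\Theta)\hookrightarrow\cH(K_S)$'' points in the right direction philosophically but is not one of these, and the missing lemma is exactly what makes the claim true; without it the argument doesn't close. If you intend to follow the FMI route as you announce, the missing steps are: (a) augmenting $\mathfrak D$ by the tautological conditions $S(z_i)=S(z_i)$, (b) taking the Schur complement of $\bGamma_{\mathfrak D}$ in the resulting positive kernel, and (c) applying Leech's factorization theorem to produce $G$ with $Q_2G=Q_1$; that is the step which replaces the winding-number argument in this approach.

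The necessity sketch and the semidefinite-to-definite perturbation argument at the end are fine and match the paper.
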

Note that the first part of Theorem \ref{T:BTOA-NP} for the special 
case where the data set ${\mathfrak D}$ has the  
form \eqref{babydata'} coming from the data set \eqref{babydata} for a 
BT-NP problem amounts to the content of Theorem \ref{T:BT-NP}.

\smallskip

The BTOA-NP interpolation problem and closely related problems have 
been studied and analyzed using a variety of methodologies by  number 
of authors, especially in the 1980s and 1990s, largely inspired by 
connections with the then emerging $H^{\infty}$-control theory (see 
\cite{Francis}).  We mention in 
particular the Schur-algorithm approach in \cite{LA, DD, AD}, the method of 
Fundamental Matrix Inequalities by the 
Potapov school (see e.g., \cite{KP})) and the related formalism of the 
Abstract Interpolation Problem of Katsnelson-Kheifets-Yuditskii 
(see \cite{KKY, Kh}), the Commutant Lifting approach of 
Foias-Frazho-Gohberg-Kaashoek (see \cite{FF, FFGK}, and the 
Reproducing Kernel approach of Dym and collaborators (see \cite{Dym, 
DymOT143}).  Our focus here is to revisit two other approaches:  
(1) the Grassmannian/Kre\u{\i}n-space-geometry 
approach of Ball-Helton \cite{BH}, and (2) the state-space implementation 
of this approach due to Ball-Gohberg-Rodman (\cite{BGR}).  The first 
(Grassmannian) approach  relies on Kre\u{\i}n-space geometry to arrive at the 
existence of a solution; the analysis is constructive only after one 
introduces bases to coordinatize various subspaces and operators. The 
second (state-space) approach has the same starting point as the first 
(encoding the problem in terms of the graph of the sought-after solution rather 
than in terms of the solution itself), but finds state-space 
coordinates in which to coordinatize the $J$-inner function 
parametrizing the set of solutions and then verifies the 
linear-fractional parametrization by making use of intrinsic 
properties of $J$-inner functions together with an explicit 
winding-number argument, thereby bypassing any appeal to general 
results from Kre\u{\i}n-space geometry.  This second approach 
proved to be more accessible to users (e.g., engineers) who were not comfortable with 
the general theory of Kre\u{\i}n spaces.

\smallskip

It turns out that the solution criterion  $\bGamma_{\mathfrak D}\succeq 0$
arises more naturally in the second (state-space) approach.  
Furthermore, when $\bGamma_{\mathfrak D} \succ 0$ 
($\bGamma_{\mathfrak D}$ is strictly positive 
definite), one gets a linear-fractional parametrization for the set 
of all Schur-class solutions of the interpolation conditions. The 
matrix function $\Theta$ generating the linear-fractional map also 
generates a matrix kernel function $K_{\Theta,J}$ which is a positive 
kernel exactly when $\bGamma_{\mathfrak D} \succ 0$.  We can then 
view the fact that the associated reproducing kernel space 
$\cH(K_{\Theta,J})$ is a Hilbert space as also a solution criterion 
for the BTOA-NP interpolation problem in the nondegenerate case.

\smallskip

In the first (Grassmannian/Kre\u{\i}n-space-geometry) 
approach, on the other hand, the immediate solution criterion is in terms of the 
positivity of a certain finite-dimensional subspace  $(\cM_{\mathfrak 
D}^{[\perp \cK]})_{0}$ of a Kre\u{\i}n space constructed 
from the interpolation data ${\mathfrak D}$.  In the Left Tangential 
case, one can identify $\bGamma_{\mathfrak D}$ as the Kre\u{\i}n-space 
gramian matrix with respect to a natural basis for  
$(\cM_{\mathfrak D}^{[\perp \cK]})_{0}$, thereby confirming directly 
the equivalence of the two seemingly distinct solution criteria.  For the general 
BiTangential case, the connection between $\bGamma_{\mathfrak D}$ and  
$(\cM_{\mathfrak D}^{[\perp \cK]})_{0}$ is not so direct, but nevertheless, 
using ideas from \cite{BHMesa}, we present here a direct proof as to why 
$\bGamma_{\mathfrak D} \succeq 0$ is equivalent to Kre\u{\i}n-space positivity 
of $(\cM_{\mathfrak D}^{[\perp \cK]})_{0}$ which is interesting in its own right. 
Along the way, we also show how the Fundamental Matrix Inequality approach 
to interpolation of the Potapov school \cite{KP} can be incorporated into this 
BTOA-interpolation formalism to give an alternative derivation of the linear-fractional 
parametrization which also bypasses the winding-number argument, at 
least for the classical Schur-class setting.  We also sketch how all 
the results extend to the more general problem where one seeks 
solutions of the BTOA interpolation conditions \eqref{BTOAint1}--\eqref{BTOAint3} 
in the Kre\u{\i}n-Langer generalized Schur class $\cS_{\kappa}^{p \times m}(\Pi_{+})$ 
with the integer $\kappa$ as small as possible.

\smallskip

The plan of the paper is as follows.  In Section \ref{S:proof1} we 
sketch the ideas of the second (state-space) approach, with the 
Fundamental Matrix Inequality approach and the reproducing-kernel 
interpretation dealt with in succeeding subsections.  In Section 
\ref{S:proof2} we sketch the somewhat more involved ideas behind the 
first (Grassmannian/Kre\u{\i}n-space-geometry) approach.  In Section 
\ref{S:synthesis} we identify the connections between the two 
approaches and in particular show directly that the two solution 
criteria are indeed equivalent.  In the final Section 
\ref{S:negsquares} we indicate how the setup extends to interpolation 
problems for the generalized Schur class $\cS_{\kappa}^{p \times 
m}(\Pi_{+})$.

\section{The state-space approach to the BTOA-NP interpolation problem}  \label{S:proof1}

In this section we sketch the analytic proof of Theorem  \ref{T:BTOA-NP} from \cite{BGR}.
For $\cU$ and $\cY$ Hilbert spaces, we let $\cL(\cU, \cY)$ denote the
space of bounded linear operators mapping $\cU$ into $\cY$,
abbreviated to $\cL(\cU)$ in case $\cU = \cY$.  We then define the
operator-valued version of the Schur class $\cS_{\Omega}(\cU, \cY)$
to consist of holomorphic functions $S$ on $\Omega$
with values equal to contraction operators between $\cU$ and $\cY$. 

\smallskip

We first recall some standard facts concerning positive kernels and 
reproducing kernel Hilbert spaces (see e.g., \cite{BB-HOT}).
Given a point-set $\Omega$  and coefficient Hilbert space $\cY$ along 
with a function $K \colon \Omega \times \Omega \to \cL(\cY)$, we say 
that $K$ is a {\em positive kernel on $\Omega$} if
\begin{equation}   \label{posker}
    \sum_{i,j=1}^{N}\langle K(\omega_{i}, \omega_{j}) y_{j}, y_{i} 
    \rangle_{\cY} \ge 0
\end{equation}
for any collection of $N$ points $\omega_{1}, \dots, \omega_{N} \in 
\Omega$ and vectors $y_{1}, \dots, y_{N} \in \cY$ with arbitrary 
$N\ge 1$. It is well known  that 
the following are equivalent:
\begin{enumerate}
    \item $K$ is a positive kernel on $\Omega$.
    \item $K$ is the {\em reproducing kernel} for a {\em reproducing kernel 
    Hilbert space} $\cH(K)$ consisting of  functions $f\colon \Omega 
    \to \cY$ such that, for each $\omega \in \Omega$ and $y \in \cY$ the 
	function $k_{\omega,y} \colon \Omega \to \cY$ defined by
\begin{equation}  \label{RKHSa}
k_{\omega,y}(\omega') = K(\omega', \omega)y
\end{equation}
is in $\cH(\Omega)$ and has the reproducing property: for each $f \in 
\cH(K)$, 
\begin{equation}   \label{RKHSb}
\langle f, k_{\omega,y}\rangle_{\cH(K)} = \langle f(\omega), y 
\rangle_{\cY}.
\end{equation}
\item $K$ has a {\em Kolmogorov decomposition}:  there is a Hilbert 
space $\cX$ and a function $H \colon \Omega \to \cL(\cX, \cY)$ so that
\begin{equation}   \label{Kolmogorov}
    K(\omega', \omega) = H(\omega') H(\omega)^{*}.
\end{equation}
\end{enumerate}

\begin{proof}[Proof of Theorem \ref{T:BTOA-NP}]  We first illustrate the 
    proof of necessity for the easier 
     simple-multiplicity case as formulated 
    in Theorem \ref{T:BT-NP}; the idea is essentially the same as the necessity proof 
    in Limebeer-Anderson \cite{LA}.

\smallskip
 
It is well known that a Schur-class function $F \in\cS_{\mathbb D}(\cU,\cY)$ 
on the unit disk can be characterized not only by the positivity of 
the de Branges-Rovnyak kernel 
$$
  {\mathbf K}_{F}(\lam, w) = \frac{ I - F(\lam) F(w)^{*}}{1 - z 
  \overline{\zeta}}
$$
on the unit disk ${\mathbb D}$, but also by positivity of the
block $2 \times 2$-matrix kernel defined on $({\mathbb D} \times 
    {\mathbb D}) \times ({\mathbb D} \times {\mathbb D})$ by
$$
   \widetilde  {\mathbf K}_{F}(\lam, \lam_{*}; w, w_{*}): = \begin{bmatrix} 
   {\displaystyle\frac{ I -  F(\lam) 
    F(w)^{*}}{1 - \lam \overline{w}}} & {\displaystyle\frac{F(\lam) - 
    F(\overline{w}_{*})}{\lam - \overline{w}_{*}}}\vspace{1mm}\\
   {\displaystyle\frac{F(\overline{\lam}_{*})^{*} - 
   F(w)^{*}}{\lam_{*} - \overline{w}}} &
   {\displaystyle\frac{I - F(\overline{\lam}_{*})^{*} F(\overline{w}_{*})}{1 - \lam_{*} 
   \overline{w}_{*}}} 
\end{bmatrix}.
$$
Making use of the linear-fractional change of variable from ${\mathbb D}$ to $\Pi_{+}$
$$
    \lam \in {\mathbb D} \mapsto z = \frac{ 1+ \lam}{1- \lam} \in \Pi_{+}
$$
with inverse given by
$$
  z \in \Pi_{+} \mapsto \lam = \frac{z -1}{z +1} \in {\mathbb D},
$$
it is easily seen that the function $S$ defined on $\Pi_{+}$ is in the 
Schur class $\cS_{\Pi_+}(\cU,\cY)$ over $\Pi_{+}$ if and only if, not 
only the $\Pi_{+}$-de Branges-Rovnyak kernel
\begin{equation}   \label{dBRkerPi+}
K_{S}(z, \zeta) = \frac{I - S(z) S(\zeta)^{*}}{z + \overline{\zeta}}
\end{equation}
is a positive kernel on $\Pi_{+}$, but also the ($2 \times 2$)-block de 
Branges-Rovnyak kernel
\begin{equation}  \label{ker22}
  {\mathbf K}_{S}(z, z_{*}; \zeta, \zeta_{*}): = \begin{bmatrix} 
{\displaystyle\frac{ I - S(z) S(\zeta)^{*}}{ z + \overline{\zeta}}} &
{\displaystyle\frac{S(z) - S(\overline{\zeta}_{*})}{z - \overline{\zeta}_{*}}} 
\vspace{1mm}\\  {\displaystyle\frac{ S(\overline{z}_{*})^{*} - S(\zeta)^{*}}
{ z_{*} - \overline{\zeta}}} &
{\displaystyle\frac{I - S(\overline{z}_{*})^{*} S(\overline{\zeta}_{*})}
{ z_{*} + \overline{\zeta}_{*}}}
\end{bmatrix}
\end{equation}
is a positive kernel on $(\Pi_{+}\times\Pi_+) \times (\Pi_+\times \Pi_{+})$.  Specifying the 
latter kernel at the points $(z, z_{*}), (\zeta, \zeta_{*}) \in \Pi_{+} \times \Pi_{+}$ 
where  $z, \zeta = z_{1}, \dots, z_{N}$ and $z_{*}, \zeta_{*} = \overline{w}_{1}, \dots, 
\overline{w}_{N'}$, leads to the conclusion that the block matrix
\begin{equation}  \label{Pick-pre}
\begin{bmatrix} \left[ {\displaystyle\frac{I - S(z_{i}) 
    S(z_{j})^{*}}{z_{i} + \overline{z}_{j}}} \right]  &
   \left[ {\displaystyle\frac{S(z_{i}) - S(w_{j'})}{ z_{i} - w_{j'}}} \right] 
   \vspace{1mm}\\
   \left[ {\displaystyle\frac{S(w_{i'})^{*} - S(z_{j})^{*}}{\overline{w}_{j'} - 
   \overline{z}_{i}}} \right] & \left[ {\displaystyle\frac{ I - S(w_{i'})^{*} 
   S(w_{j'})}{\overline{w}_{i'} + w_{j'}}} \right] \end{bmatrix},
\end{equation}
where $1 \le i,j \le N$ and $1 \le i', j' \le N'$, is positive 
semidefinite. Note that the entry  $\frac{S(z_{i}) - S(w_{j'})}{ 
z_{i} - w_{j'}}$ in the upper right corner is to be interpreted as 
$S'(\xi_{i j'})$ in case $z_{i} = w_{j'} =: \xi_{i,j'}$ for some pair 
of indices $i,j'$.  

\smallskip

Suppose now that $S \in \cS_{\Pi_{+}}(\cU, \cY)$ is a 
Schur-class solution of the interpolation conditions \eqref{inter1'}, 
\eqref{inter2'}, \eqref{inter3'}.  When we multiply the matrix 
\eqref{Pick-pre} on the left by the block diagonal matrix
$$
  \begin{bmatrix} \operatorname{diag}_{1 \le i \le N}[ x_{i}] & 0 \\ 
      0 & \operatorname{diag}_{1 \le i' \le N'} [ u_{i'}^{*}] 
  \end{bmatrix}
$$
and on the right by its adjoint, we arrive at the matrix 
$P_{{\mathfrak D}_{\rm simple}}$ \label{Psimple}.  This verifies the 
necessity of the condition $P_{{\mathfrak D}_{\rm simple}} \succeq 0$ 
for a solution of the BT-NP interpolation problem to exist.

\smallskip

We now consider the proof of necessity for the general case.
We note that the proof of necessity in 
    \cite{BGR} handles explicitly only the case where the Pick matrix 
    is invertible and relies on use of the matrix-function $\Theta$ 
    generating the linear-fractional parametrization (see 
    \eqref{oct1} below).  We give a proof 
    here which proceeds directly from the BTOA-interpolation formulation; it 
   amounts to a specialization of the proof of necessity for the more 
   complicated multivariable interpolation problems in the Schur-Agler 
   class done in \cite{BB05}.      

\smallskip

The starting point is the observation that the positivity of the 
kernel ${\mathbf K}_{S}$ implies that it has a Kolmogorov 
decomposition \eqref{Kolmogorov};  furthermore the extra structure of the arguments of 
the kernel ${\mathbf K}_{S}$ implies  that the Kolmogorov 
decomposition can be taken to have the form
\begin{equation}   \label{KS2}
 {\mathbf K}_{S}(z, z_{*}; \zeta, \zeta_{*}) =
 \begin{bmatrix} H(z) \\ G(z_{*})^* \end{bmatrix}
     \begin{bmatrix} H(\zeta)^{*} & G(\zeta_{*}) \end{bmatrix}
\end{equation}
for holomorphic operator functions
$$  
H \colon \Pi_{+} \to \cL(\cX, \cY), \quad G \colon \Pi_{+} \to \cL(\cU, \cX).
$$
In the present matricial setting of $\mathbb C^{p\times m}$-valued functions, 
the spaces $\cU$ and $\cY$ are finite dimensional and can be identified
with $\mathbb C^p$ and $\mathbb C^m$, respectively.  
In particular we read off the identity
\begin{equation}\label{Gamma'12}
 H(z)G(\zeta) =  \frac{ S(z) - S(\zeta)}{z - \zeta}
 \end{equation}
with appropriate interpretation in case $z = \zeta$. Observe that 
for a fixed $\zeta\in \Pi_+\backslash\sigma(Z)$, we have from \eqref{Gamma'12}
\begin{align}
(XH)^{\wedge L}(Z)\cdot G(\zeta)
&=\sum_{z_{0} \in \sigma(Z)} {\rm Res}_{\lambda 
= z_{0}} (\lam I- Z)^{-1} X H(\lam)G(\zeta)\notag\\ 
&=\sum_{z_{0} \in \sigma(Z)} {\rm Res}_{\lambda = z_{0}} (\lam I- Z)^{-1} X  
\frac{ S(\lam) - S(\zeta)}{\lam - \zeta}\notag\\
&=\sum_{z_{0} \in \sigma(Z)} {\rm Res}_{\lambda = z_{0}} (\lam I- Z)^{-1}
\frac{Y - XS(\zeta)}{\lam - \zeta}\notag\\
&=(\zeta I-Z)^{-1}(XS(\zeta)-Y)\label{aug1}
\end{align}
where we used the interpolation condition \eqref{resint1} for the third equality.
Since the function $g(\zeta)=(\zeta I-Z)^{-1}YU(\zeta I-W)^{-1}$ 
satisfies an estimate of the form
$ \|g(\zeta \| \le \frac{M}{ |\zeta|^{2}}\; $ as $\; |\zeta| \to \infty$, it follows that
$$
\sum_{z_{0} \in \sigma(Z)\cup\sigma(W)}{\rm Res}_{\zeta= z_{0}}
(\zeta I-Z)^{-1}YU(\zeta I-W)^{-1}=0.
$$
On the other hand, due to condition \eqref{resint1}, the function on the 
right hand side of \eqref{aug1}
is analytic (in $\zeta$) on $\Pi_+$, so that 
\begin{align*}
&\sum_{z_{0} \in \sigma(W)} {\rm Res}_{\zeta=z_0}
(\zeta I-Z)^{-1}(XS(\zeta)-Y)U(\zeta I-W)^{-1}\\
&=\sum_{z_{0} \in \sigma(Z)\cup\sigma(W)} 
{\rm Res}_{\zeta=z_0}(\zeta I-Z)^{-1}(XS(\zeta)-Y)U(\zeta I-W)^{-1}.
\end{align*}
We now apply the {\bf RTOA} point evaluation to both sides in \eqref{aug1} and
make use of the two last equalities and the interpolation condition \eqref{resint3}:
\begin{align}
&(XH)^{\wedge L}(Z)(GU)^{\wedge R}(W)\notag \\
&=\sum_{z_{0} \in \sigma(W)} {\rm Res}_{\zeta=z_0}(\zeta I-Z)^{-1}
(XS(\zeta)-Y)U(\zeta I-W)^{-1}\notag\\
&=\sum_{z_{0} \in \sigma(Z)\cup\sigma(W)} {\rm Res}_{\zeta=z_0}
(\zeta I-Z)^{-1}(XS(\zeta)-Y)U(\zeta I-W)^{-1}\notag\\
&=\sum_{z_{0} \in \sigma(Z)\cup\sigma(W)} 
{\rm Res}_{\zeta=z_0}(\zeta I-Z)^{-1}XS(\zeta)U(\zeta I-W)^{-1}=\Gamma.
\label{aug2}
\end{align}
Let us now introduce the block $2 \times 2$-matrix ${\mathbf \Gamma}'_{\mathfrak 
D}$ by
\begin{equation} 
  \bGamma'_{\mathfrak D} = \begin{bmatrix} (XH)^{\wedge L}(Z) 
  \\ (GU)^{\wedge R}(W)^{*} \end{bmatrix}
  \begin{bmatrix} ((XH)^{\wedge L}(Z) )^{*} & (GU)^{\wedge R}(W) 
  \end{bmatrix}.
\label{aug4}
\end{equation}
We claim that $\bGamma'_{\mathfrak D} = \bGamma_{\mathfrak D}$.  Note 
that equality of the off-diagonal blocks follows from 
\eqref{aug2}. It remains to show the two equalities
\begin{align}
\Gamma'_{L}&:=(XH)^{\wedge L}(Z) ((XH)^{\wedge L}(Z))^{*}= \Gamma_{L}, \label{verifyGamma}\\
\Gamma'_{R}&:=((G^{*}U)^{\wedge R}(W) )^{*} (G^{*}U)^{\wedge R}(W)=
   \Gamma_{R}.\label{verifyGamma1}
\end{align}
To verify \eqref{verifyGamma}, we note 
that $\Gamma_{L}$ is defined as the unique solution of the Lyapunov 
equation \eqref{GammaL}.  Thus it suffices to verify that $\Gamma'_{L}$
also satisfies \eqref{GammaL}.  Toward this end, the two expressions \eqref{ker22} and 
\eqref{KS2} for ${\mathbf K}_{S}$ give us equality of the 
$(1,1)$-block entries:
$$
  H(z) H(\zeta)^{*} = \frac{ I - S(z) S(\zeta)^{*}}{ z + 
  \overline{\zeta}}
$$
which we prefer to rewrite in the form
\begin{equation}  \label{identity}
 z \cdot H(z) H(\zeta)^{*} + H(z) H(\zeta)^{*} \cdot 
 \overline{\zeta} = I - S(z) S(\zeta)^{*}.
\end{equation}
To avoid confusion, let us introduce the notation $\chi$ for the 
identity function $\chi(z) = z$ on $\Pi_{+}$.  Then it is 
easily verified that
\begin{equation}  \label{identity'}
 ( X \chi \cdot H)^{\wedge L}(Z) = Z (X H)^{\wedge L}(Z).
\end{equation}
Multiplication on the left by $X$ and on the right by $X^{*}$ and 
then plugging in the left operator argument $Z$ for $\lam$ in \eqref{identity} then gives
\begin{align*}
& Z (X H)^{\wedge L}(Z) H(\zeta)^{*}X^{*} + (X H)^{\wedge L}(Z) ( \zeta 
 \cdot H(\zeta)^{*}X^{*} \\
 & \quad = XX^{*} - (X S)^{\wedge L}(Z) S(\zeta)^{*}  = XX^{*} - Y S(\zeta)^{*}X^{*}.
\end{align*}
Replacing the variable $\zeta$ by the operator argument $Z$ and 
applying the adjoint of the identity \eqref{identity'} then brings us to
$$
Z (XH)^{\wedge L}(Z)( (XH)^{\wedge L}(Z))^{*} Z^{*} = 
X X^{*} - Y \left( X S^{\wedge L}(Z) \right)^{*} = X X^{*} - Y Y^{*},
$$
i.e., $\Gamma'_{L}$ satisfies \eqref{GammaL} as wanted.  The proof 
that $\Gamma'_{R}$ (see \eqref{verifyGamma1}) 
satisfies \eqref{GammaR} proceeds in a similar way.
 
\smallskip

For the sufficiency direction, for simplicity we shall assume 
    that $\bGamma_{\mathfrak D}$ is strictly positive definite rather 
    than just positive semidefinite. We then must show that 
    solutions $S$ of the BTOA-NP problem exist and in fact the set of 
    all solutions is given by the linear-fractional parametrization
    \eqref{LFTparam}.  The case where the Pick matrix is 
    positive-semidefinite then follows by perturbing the semidefinite 
    Pick matrix to a definite Pick matrix and using an approximation 
    and normal families argument.  The ideas follow \cite{BGR}.
    
\smallskip

Let us therefore assume that $\Gamma_{\mathfrak D}$ is positive 
definite.  Then we can form the rational matrix function $\Theta$ 
given by \eqref{Theta}.  Let us write $\Theta$ in the more condensed 
form 
\begin{equation}
\Theta(\lam) = I_{p+m} - \bC (\lam I - \bA)^{-1} \bGamma_{\mathfrak 
D}^{-1} \bC^{*} J
\label{oct1}
\end{equation}
where we set
\begin{equation}
\bA = \begin{bmatrix} -Z^{*} & 0 \\ 0 & W \end{bmatrix}, \quad
\bC = \begin{bmatrix} -X^{*} & V \\ -Y^{*} & U \end{bmatrix},\quad 
J = \begin{bmatrix} I_{p} & 0 \\ 0 & -I_{m} \end{bmatrix}.
\label{oct2}
\end{equation}
Recall that $\Gamma_{L}$, $\Gamma_{R}$, $\Gamma$ satisfy the 
Lyapunov/Sylvester equations \eqref{GammaL}, \eqref{GammaR}, 
\eqref{Sylvester'}.  Consequently one can check that 
$\bGamma_{\mathfrak D}$ satisfies the $(2 \times 2)$-block 
Lyapunov/Sylvester  equation
\begin{align*}
  &  \begin{bmatrix} \Gamma_{L} & \Gamma \\ \Gamma^{*} & \Gamma_{R} 
	\end{bmatrix} \begin{bmatrix} -Z^{*} & 0 \\ 0 & W \end{bmatrix}
+ \begin{bmatrix} -Z & 0 \\ 0 & W^{*} \end{bmatrix} \begin{bmatrix} 
\Gamma_{L} & \Gamma \\ \Gamma^{*} & \Gamma_{R} \end{bmatrix} \\
& \quad = \begin{bmatrix} Y Y^{*} - X X^{*} & XV - YU \\ V^{*} X^{*} - U^{*} 
Y^{*} & V^{*} U - V^{*} V \end{bmatrix},
\end{align*}
or, in more succinct form,
\begin{equation}   \label{bigLySyl}
  \bGamma_{\mathfrak D} \bA + \bA^{*} \bGamma_{\mathfrak D} = - \bC^{*} J \bC.
\end{equation}
Using this we compute
\begin{align*}
 &   J - \Theta(\lam) J \Theta(\zeta)^{*}  = \\
&  J - \left( I - \bC (\lam I - \bA)^{-1} \bGamma_{\mathfrak D}^{-1} 
 \bC^{*} J \right) J \left( I - J \bC \bGamma_{\mathfrak D}^{-1} 
 (\overline{\zeta} I - \bA^{*})^{-1} \bC^{*} \right) \\
 & =\bC (\lam I - \bA)^{-1} \bGamma_{\mathfrak D}^{-1} \bC^{*}
 + \bC \bGamma_{\mathfrak D}^{-1} (\overline{\zeta} I -\bA^{*})^{-1} 
 \bC^{*} \\
 & \quad \quad \quad \quad - \bC (\lam I - \bA)^{-1} \bGamma_{\mathfrak 
 D}^{-1} \bC^{*} J \bC \Gamma_{\mathfrak D}^{-1} (\overline{\zeta} I 
 - \bA^{*})^{-1} \bC^{*} \\
 & = \bC (\lam I - A)^{-1} \bGamma_{\mathfrak D}^{-1} \Xi(\lam,\zeta) (\overline{\zeta} I - 
 \bA^{*})^{-1} \bGamma_{\mathfrak D}^{-1} \bC^{*}
 \end{align*}
 where
$$
     \Xi(\lam,\zeta) =(\overline{\zeta} I - \bA^{*}) \bGamma_{\mathfrak D} 
     + \bGamma_{\mathfrak D} (\lam I - \bA) - \bC^{*} J \bC= (\lam + \overline{\zeta}) 
     \bGamma_{\mathfrak D},
$$
 where we used \eqref{bigLySyl} in the last step. We conclude that 
 \begin{equation}
 K_{\Theta,J}(\lam.\zeta):= \frac{J - \Theta(\lam) J \Theta(\zeta)^{*}}
 {\lam + \overline{\zeta}}=  
\bC(\lam I - \bA)^{ -1} \bGamma_{\mathfrak D}^{-1} 
   (\overline{\zeta}I - \bA^{*})^{-1} \bC^{*}.
 \label{ad1}
\end{equation}
 By assumption, $\sigma(Z)\cup\sigma(W)\subset \Pi_+$, so   
the matrix $\bA = \sbm{ -Z^{*} & 0 \\ 0 & W }$ has no 
 eigenvalues on the imaginary line, and hence $\Theta$ is analytic and 
 invertible on $i {\mathbb R}$.  As a consequence of \eqref{ad1}, 
 we see that  $\Theta(\lam)$ is $J$-coisometry for $\lam 
 \in i{\mathbb R}$.  As $J$ is a finite matrix we actually have (see \cite{AI}): 
 \begin{itemize}
     \item {\em for $\lam \in i{\mathbb R}$, $\Theta(\lam)$ is 
     $J$-unitary:}
 \begin{equation}  \label{ThetaJunitary}
     J - \Theta(\lam)^{*} J \Theta(\lam) = J - \Theta(\lam) J 
     \Theta(\lam)^{*} = 0 \; \text{ for } \; \lam \in i{\mathbb R}.
 \end{equation}
  \end{itemize}
 The significance of the assumption that $\bGamma_{\mathfrak D}$ is 
 not only invertible but also positive definite is that
 \begin{itemize}
     \item {\em for $\lam \in \Pi_{+}$ a point of analyticity for 
     $\Theta$, $\Theta(\lam)$ is $J$-bicontractive:}
 \begin{equation} \label{ThetaJcontr}
	J - \Theta(\lam)^{*} J \Theta(\lam) \succeq 0, \quad
	J - \Theta(\lam) J \Theta(\lam)^{*} \succeq 0 \text{ for } 
	\lam \in \Pi_{+}.
\end{equation}
 \end{itemize}
 Here we make use of the fact that $J$-co-contractive is equivalent 
 to $J$-contractive in the matrix case (see \cite{AI}). 
These last two observations have critical consequences.  Again 
   writing out $\Theta$ and $J$ as
$$ \Theta(\lam) = \begin{bmatrix} \Theta_{11}(\lam) & 
\Theta_{12}(\lam) \\ \Theta_{21}(\lam) & \Theta_{22}(\lam) 
\end{bmatrix}, \quad J = \begin{bmatrix} I_{p} & 0 \\ 0 & -I_{m} 
\end{bmatrix},
$$
relations \eqref{ThetaJunitary} and \eqref{ThetaJcontr}) 
give us (with the variable $\lam$ suppressed)
 $$
   \begin{bmatrix} \Theta_{11} \Theta_{11}^{*} - \Theta_{12} 
       \Theta_{12}^{*} & \Theta_{11} \Theta_{21}^{*} - \Theta_{12} 
       \Theta_{22}^{*} \\
  \Theta_{21} \Theta_{11}^{*} - \Theta_{22} \Theta_{12}^{*} &
  \Theta_{21} \Theta_{21}^{*} - \Theta_{22} \Theta_{22}^{*} 
\end{bmatrix} \preceq \begin{bmatrix} I_{p} & 0 \\ 0 & - I_{m} 
\end{bmatrix}
$$
for $\lam$ a point of analyticity of $\Theta$ in $\Pi_{+}$ with equality for 
$\lam$ in $i {\mathbb R} = \partial \Pi_{+}$ (including the point 
at infinity). In particular,
$$
  \Theta_{21} \Theta_{21}^{*} - \Theta_{22} \Theta_{22}^{*} \preceq 
  -I_{m}
$$
or equivalently,
\begin{equation}   \label{Theta22inv}
  \Theta_{21} \Theta_{21}^{*} + I_{m} \preceq \Theta_{22} 
  \Theta_{22}^{*}.
\end{equation}
Hence, $\Theta_{22}(\lam)$ is invertible at all points 
$\lam$ of analyticity in $\Pi_+$, namely, $\Pi_{+} \setminus 
\sigma(W)$, and then, since multiplying 
on the left by $\Theta_{22}^{-1}$ and on the right by its adjoint 
preserves the inequality, we get 
\begin{equation} \label{Theta22invTheta21}
  \Theta_{22}^{-1} \Theta_{21} \Theta_{21}^{*} \Theta_{22}^{*-1} + 
  \Theta_{22}^{-1} \Theta_{22}^{* -1} \preceq I_{m}.
\end{equation}
We conclude: 
\begin{itemize}
    \item {\em $\Theta_{22}^{-1}$ has analytic continuation to a 
contractive $m \times m$-matrix function on all of $\Pi_{+}$ and 
$\Theta_{22}^{-1} \Theta_{21}$ has analytic continuation to an 
analytic $m \times p$-matrix rational function which is pointwise 
strictly contractive on the closed right half 
plane $\overline{\Pi}_{+} = \Pi_{+} \cup i{\mathbb R}$.}
\end{itemize}

It remains to make the connection of $\Theta$ with the BTOA-NP interpolation 
problem.  Let us introduce some additional notation.
For $N$ a positive integer, 
$ H^{2}_{N}(\Pi_{+})$ is short-hand notation for the ${\mathbb 
C}^{N}$-valued Hardy space $H^{2}(\Pi_{+}) \otimes {\mathbb C}^{N}$ 
over the right half plane $\Pi_{+}$.  Similarly $L^{2}_{N}(i 
{\mathbb R}) = L^{2}(i {\mathbb R}) \otimes {\mathbb C}^{N}$ is the 
${\mathbb C}^{N}$-valued $L^{2}$-space over the imaginary line $i {\mathbb R}$.  

\smallskip

It is well known (see e.g.\ \cite{Hoffman}) that the space 
$H^{2}_{N}(\Pi_{+})$ (consisting of analytic functions on $\Pi_{+}$) 
can be identified with 
a subspace of $L^{2}_{N}(i {\mathbb R})$ (consisting of measurable 
functions on $i {\mathbb R}$ defined only almost everywhere with 
respect to linear Lebesgue measure) via the process of taking 
nontangential limits from $\Pi_{+}$ to a point on $i {\mathbb R}$.  
Similarly the Hardy space $H^{2}_{N}(\Pi_{-})$ over the left half 
plane can also be identified with a subspace (still denoted as 
$H^{2}_{N}(\Pi_{-})$) of $L^{2}_{N}(i {\mathbb R})$, and, after these 
identifications, $H^{2}_{N}(\Pi_{-}) = H^{2}_{N}(\Pi_{+})^{\perp}$ 
as subspaces of $L^{2}_{N}(i {\mathbb R})$:
$$
  L^{2}_{N}(i {\mathbb R}) = H^{2}_{N}(\Pi_{+}) \oplus 
  H^{2}_{N}(\Pi_{-}).
$$
We shall use these identifications freely in the discussion to follow.
Given the  $\Pi_{+}$-admissible interpolation data set
\eqref{data},  we define a  subspace of 
$L^{2}_{p+m}(i{\mathbb R})$ by
\begin{align}
    \cM_{\mathfrak D} = &\left \{ \begin{bmatrix} V \\ U  \end{bmatrix} (\lam I - W)^{-1} x + 
    \begin{bmatrix} f(\lam) \\ 
g(\lam) \end{bmatrix}  \colon x \in {\mathbb C}^{n_{W}} \text{ and } 
\begin{bmatrix} f \\ g \end{bmatrix} \in H^{2}_{p+m}(\Pi_{+} )  
    \right. \notag \\
& \left. \text{ such that }  
 \sum_{z_{0} \in \Pi_{+}} {\rm Res}_{\lam = z_{0}} (\lam I - Z)^{-1} 
\begin{bmatrix} X & -Y \end{bmatrix}   \begin{bmatrix} f(\lam) \\ 
    g(\lam) \end{bmatrix}  = \Gamma x \right\}.
    \label{cMrep}
\end{align}
and a subspace of $L^{2}_{m}(i {\mathbb R})$ by 
$$ 
\cM_{{\mathfrak D}, -}=\{ U (\lam I - W)^{-1} x \colon x \in {\mathbb C}^{n_{W}} \}
    \oplus H^{2}_{m}(\Pi_{+}).
$$
Using $\Pi_{+}$-admissibility assumptions  on the data set ${\mathfrak D}$ one can show
(we refer to \cite{BGR} for details, subject to the disclaimer in 
Remark \ref{R:Amaya} below) that 
$$
\cM_{{\mathfrak D}, -}= P_{\sbm{ 0 \\  L^{2}_{m}(i{\mathbb R})}}\cM_{\mathfrak D}.
$$
Furthermore, a variant of the Beurling-Lax Theorem assures us that there is a 
$m \times m$-matrix 
inner function $\psi$ on $\Pi_{+}$ so that
\begin{equation}   \label{cM-rep}
 \cM_{{\mathfrak D},-} = \psi^{-1} \cdot H^{2}_{m}(\Pi_{+}).
\end{equation}
Making use of \cite[Theorem 6.1]{BGR} applied to the null-pole triple 
$(U,W; \emptyset, \emptyset; \emptyset)$ over $\Pi_{+}$,
one can see that such a 
$\psi$ (defined uniquely up to a constant unitary factor on the left) 
is given by the state-space realization formula 
\begin{equation}
\psi(z)=I_m-UP^{-1}(zI+W^*)^{-1}U^*,
\label{sep1}
\end{equation}
where the positive definite matrix $P$ is uniquely defined from the Lyapunov equation 
$PW+W^*P=U^*U$, with $\psi^{-1}$ given by
\begin{equation}   \label{psi-inv}
\psi(z)^{-1}=I_m+U(zI-W)^{-1}P^{-1}U^*,
\end{equation}
i.e., that $(U,W)$ is the right null pair of $\psi$. 
Furthermore, a second application of  \cite[Theorem 6.1]{BGR} 
to the null-pole triple $(\sbm{ V \\ U}, W; Z, \sbm{ X & -Y}; \Gamma)$ 
over $\Pi_{+}$ leads to:
\begin{itemize}
    \item {\em $\cM_{\mathfrak D}$ has the Beurling-Lax-type 
    representation} 
\begin{equation}   \label{cMBLrep}
\cM_{\mathfrak D} = \Theta \cdot 
    H^{2}_{p+m}(\Pi_{+}).
\end{equation}
\end{itemize}
By projecting the identity \eqref{cMBLrep}  onto the bottom component and recalling 
the identity 
\eqref{cM-rep}, we see that
\begin{equation}   \label{Theta22-1}
\begin{bmatrix} \Theta_{21} & \Theta_{22} \end{bmatrix} H^{2}_{p + 
    m}(\Pi_{+}) = \cM_{{\mathfrak D}, -} =
    \psi^{-1} H^{2}_{m}(\Pi_{+}).
 \end{equation} 
 On the other hand, for any $\sbm{ f_{+} \\ f_{-} } \in 
 H^{2}_{p+m}(\Pi_{+})$, we have
 \begin{align}
 \begin{bmatrix} \Theta_{21} & \Theta_{22} \end{bmatrix} 
     \begin{bmatrix} f_{+} \\ f_{-} \end{bmatrix} &=
\Theta_{21} f_{+} + \Theta_{22} f_{-} \notag\\
&= \Theta_{22} 
(\Theta_{22}^{-1} \Theta_{21} f_{+} + f_{-}) \in \Theta_{22} 
H^{2}_{m}(\Pi_{+})\label{Theta22-2},
\end{align}
since $\Theta_{22}^{-1} \Theta_{21}$ is analytic on $\Pi_+$.  Since 
the reverse containment
$$
\Theta_{22}\cdot  H^{2}_{m}(\Pi_{+}) \subset \begin{bmatrix} \Theta_{21} & 
\Theta_{22} \end{bmatrix}\cdot H^{2}_{p+m}(\Pi_{+})
$$
is obvious, we may combine \eqref{Theta22-1} and \eqref{Theta22-2} to conclude that
\begin{equation}   \label{Theta22-3}
    \Theta_{22}\cdot  H^{2}_{m}(\Pi_{+}) = \begin{bmatrix} \Theta_{21} & 
    \Theta_{22} \end{bmatrix} \cdot H^{2}_{p+m}(\Pi_{+}) = \psi^{-1} 
   \cdot H^{2}_{m}(\Pi_{+}).
 \end{equation}
It turns out that the geometry of $\cM_{\mathfrak D}$ encodes the 
interpolation conditions:
\begin{itemize}
    \item {\em An analytic function $S \colon \Pi_{+} \to {\mathbb C}^{p \times m}$ 
    satisfies the interpolation conditions \eqref{BTOAint1}, 
    \eqref{BTOAint2}, \eqref{BTOAint3} if and only if}
 \begin{equation}  \label{sol-rep}
  \begin{bmatrix} S \\ I_m \end{bmatrix} \cdot \cM_{{\mathfrak D}, -}
      \subset \cM_{\mathfrak D}.
 \end{equation}
 \end{itemize}
 
 It remains to put the pieces together to arrive at the 
 linear-fractional parametrization \eqref{LFTparam} for the set of 
 all solutions (and thereby prove that solutions exist).
 Suppose that $S \in \cS^{p \times m}(\Pi_{+})$ satisfies the 
 interpolation conditions \eqref{BTOAint1}, \eqref{BTOAint2}, 
 \eqref{BTOAint3}.  As a consequence of the criterion \eqref{sol-rep}
 combined with \eqref{cM-rep} and \eqref{cMBLrep}, we have
 $$
   \begin{bmatrix} S \\ I_{m} \end{bmatrix} \psi^{-1}  \cdot
       H^{2}_{m}(\Pi_{+}) \subset \begin{bmatrix} \Theta_{11} & 
       \Theta_{12} \\ \Theta_{21} & \Theta_{22} \end{bmatrix}  \cdot
       H^{2}_{p+m}(\Pi_{+}).
 $$
 Hence there must be a $(p+m) \times m$ matrix function
 $\sbm{Q_{1} \\ Q_{2}} \in 
 H^2_{(p+m) \times m}(\Pi_{+})$ so that
  \begin{equation}  \label{Sparam}
 \begin{bmatrix} S \\ I_{m} \end{bmatrix} \psi^{-1} = 
\begin{bmatrix} \Theta_{11} & \Theta_{12} \\ 
     \Theta_{21} & \Theta_{22} \end{bmatrix} \begin{bmatrix} Q_{1}  
     \\ Q_{2} \end{bmatrix}.
\end{equation}
 
We next combine this identity with the $J$-unitary property of \eqref{ThetaJunitary}: 
for the (suppressed) argument $\lam \in i{\mathbb R}$ we have
\begin{align*}
    0 \succeq \psi^{-1 *} (S^{*} S - I) \psi^{-1}& = \psi^{-1 *} 
\begin{bmatrix} Q_{1}^{*} & Q_{2}^{*} \end{bmatrix}
    \Theta^{*} J \Theta \begin{bmatrix} Q_{1} \\ Q_{2} \end{bmatrix} 
    \psi^{-1} \\
    & =\psi^{-1 *} \begin{bmatrix} Q_{1}^{*} & Q_{2}^{*} \end{bmatrix} 
    J \begin{bmatrix} Q_{1} \\ Q_{2} \end{bmatrix} \psi^{-1}\\
    & = \psi^{-1 *} (Q_{1}^{*} Q_{1} - Q_{2}^{*} Q_{2}) \psi^{-1}.
\end{align*}
We conclude that
$$
   \| Q_{1}(\lam) x\|^{2} \le \| Q_{2}(\lam) x \|^{2} \; \text{ for all 
   } \; x \in {\mathbb C}^{m} \; \text{ and }\; \lam \in i {\mathbb R}.
$$
In particular, if $Q_{2}(\lam) x = 0$ 
for some $\lam \in i{\mathbb R}$ and $x \in {\mathbb C}^{m}$, then 
also $Q_{1}(\lam) x = 0$ and hence 
$$
 \psi(\lam)^{-1} x = \begin{bmatrix} \Theta_{21}(\lam) & 
 \Theta_{22}(\lam) \end{bmatrix} \begin{bmatrix} Q_{1}(\lam) \\ 
 Q_{2}(\lam) \end{bmatrix} x = 0,
$$
which forces $x=0$ since $\psi$ is rational matrix inner.  We conclude:
\begin{itemize}
    \item {\em for $\lam \in i{\mathbb R}$, $Q_{2}(\lam)$ is 
    invertible and $G(\lam) =  Q_{1}(\lam) Q_{2}(\lam)^{-1}$
is a contraction.}
\end{itemize}

The next step is to apply a winding-number argument to get similar 
results for $\lam \in \Pi_+$.  From the  bottom component of \eqref{Sparam} we have, 
again for the moment with $\lam \in i {\mathbb R}$,
\begin{equation}   \label{bottom}
    \psi^{-1} = \Theta_{21} Q_{1} + \Theta_{22} Q_{2} 
= \Theta_{22} (\Theta_{22}^{-1} \Theta_{21}  G + I_{m}) Q_{2}.
\end{equation}
We conclude that, for the argument $\lam \in i {\mathbb R}$, 
\begin{equation}   \label{wno'}
 \operatorname{wno} \det( \psi^{-1}) = 
 \operatorname{wno}\det(\Theta_{22}) + \operatorname{wno} 
 \det(\Theta_{22}^{-1} \Theta_{21} G + I_{m})  + 
 \operatorname{wno} \det(Q_{2}) 
\end{equation}
where we use the notation $\operatorname{wno}f$ to indicate {\em 
winding number} or {\em change of argument} of the function $f$ as 
the variable runs along the imaginary line. Since both $\det 
\Theta_{22}^{-1}$ and $\det \psi$ are analytic on 
$\overline{\Pi}_{+}$, a consequence of the 
identity \eqref{Theta22-3}  is that
\begin{equation}   \label{wno''}
\operatorname{wno} \det( \psi^{-1}) = \operatorname{wno}\det(\Theta_{22}).
\end{equation}
Combining the two last equalities gives
\begin{equation}
 \operatorname{wno}
 \det(\Theta_{22}^{-1} \Theta_{21} G + I_{m})  + \operatorname{wno} \det(Q_{2})=0.
\label{wno}
\end{equation}

We have already observed that
$$
 \| \Theta_{22}(\lam)^{-1} \Theta_{21}(\lam)\| < 1\quad\mbox{and}\quad
 \| G(\lam) \| \le 1 \; \text{ for } \; \lam \in i {\mathbb R}.
$$
Hence, for $0 \le t \le 1$ we have $\| t\Theta_{22}(\lam)^{-1} 
\Theta_{21}(\lam)  G(\lam) \| < 1$ and hence
$t \Theta_{22}(\lam)^{-1} \Theta_{21}(\lam) G(\lam) + I$ is 
invertible for $\lam \in i {\mathbb R}$ for all $0 \le t \le 1$.
Hence 
$$ 
i(t): = \operatorname{wno} \det(t \Theta_{22}(\lam)^{-1} 
\Theta_{21}(\lam) G(\lam) + I)
$$
is well defined and independent of $t$ for $0 \le t \le 1$.  
As clearly $i(0) = 0$, it follows that 
$$
i(1) =  \operatorname{wno} \det( \Theta_{22}(\lam)^{-1} 
\Theta_{21}(\lam) G(\lam) + I) = 0
$$
which, on account of \eqref{wno}, implies 
$  \operatorname{wno} \det(Q_{2})  = 0$.  As $Q_{2}$ is analytic 
on $\Pi_{+}$, we conclude that $\det Q_{2}$ has no zeros in 
$\Pi_{+}$, i.e., $Q_{2}^{-1}$ is analytic on $\Pi_{+}$.  By the 
maximum modulus theorem it then follows that
$G(\lam) : = Q_{1}(\lam) Q_{2}(\lam)^{-1}$ is in the Schur class $\cS^{p 
\times m}(\Pi_{+})$.  Furthermore, from \eqref{Sparam} we  have
\begin{equation}
\begin{bmatrix} S \\ I_{m} \end{bmatrix} = \begin{bmatrix} 
    \Theta_{11} & \Theta_{12} \\ \Theta_{21} & \Theta_{22} 
\end{bmatrix} \begin{bmatrix} G \\ I \end{bmatrix} Q_{2} \psi.
\label{ad3}
\end{equation}
From the bottom component we read off that $Q_{2} \psi = (\Theta_{21} 
G + \Theta_{22})^{-1}$.  From the first component we then get
$$
 S = (\Theta_{11} G + \Theta_{12}) Q_{2} \psi =  (\Theta_{11} G + 
 \Theta_{12}) (\Theta_{21} G + \Theta_{22})^{-1}
$$
and the representation \eqref{LFTparam} follows.

\smallskip

Conversely, if $G \in \cS^{p \times m}(\Pi_{+})$, we can reverse  the 
above argument (with $Q_{1}(\lam) = G(\lam)$ and  $Q_{2}(\lam) = I_{m}$) to see that 
$S$ of the form \eqref{LFTparam} is a Schur-class solution of the interpolation 
conditions \eqref{BTOAint1}, \eqref{BTOAint2}, \eqref{BTOAint3}.
\end{proof}

\begin{remark} \label{R:Amaya}
    The theory from \cite{BGR} is worked out explicitly only with 
$H^{2}_{m}(\Pi_{+})$ replaced by its rational subspace $\operatorname{\rm 
Rat} H^{2}_{m}$ consisting of elements of $H^{2}_{m}$ with 
rational-function column entries, and similarly $H^{2}_{m}(\Pi_{-})$ and 
$L^{2}(i {\mathbb R})$ replaced by  their respective rational subspaces $\operatorname{Rat} 
H^{2}_{m}(\Pi_{-})$ and $\operatorname{Rat} L^{2}(i {\mathbb R})$.
Nevertheless the theory is easily adapted to the $L^{2}$-setting here. 
Subspaces $\cM$ of $L^{2}_{p+m}(i {\mathbb R})$ having a representation of 
the form \eqref{cMrep} (with $\sbm{U \\ V}, W, \begin{bmatrix} X & -Y 
\end{bmatrix}, Z, \Gamma$ all equal to finite matrices rather than infinite-dimensional 
operators) are characterized by the conditions:  (1) 
$\cM$ is forward-shift invariant, i.e., $\cM$ is invariant under multiplication 
by the function $\chi(\lambda) = \frac{ \lambda -1}{ \lambda +1}$, (2) the subspace
$(\cM + H^{2}_{p+m}(\Pi_{+}))/ H^{2}_{p+m}(\Pi_{+})$ has finite dimension, and (3) the 
quotient space $\cM/\left(\cM \cap H^{2}_{p+m}(\Pi_{+})\right)$ has finite dimension.
The representation \eqref{cM-rep} with $\psi^{-1}$ of the form 
\eqref{psi-inv} with finite matrices $U,W,P$ is roughly the special case 
of the statement above where $\cM = \cM_{{\mathfrak D},-} \supset
H^{2}_{m}(\Pi_{+})$.  The analogue of such representations 
\eqref{cMrep} and \eqref{cM-rep}--\eqref{psi-inv} for more general 
full-range pure forward shift-invariant subspaces of 
$L^{2}_{p+m}(\Pi_{+})$ (or dually of full-range pure backward 
shift-invariant subspaces of $L^{2}_{p+m}(\Pi_{+})$) involving infinite-dimensional 
(even unbounded) operators  $\sbm{ U \\ V}, W, Z, \sbm{X & -Y }, \Gamma$   
is worked out in the Virginia Tech dissertation of Austin Amaya \cite{Amaya}.
\end{remark}

\subsection{The Fundamental Matrix Inequality approach of Potapov} 
\label{S:FMI}
The linear fractional parametrization formula \eqref{LFTparam} can be 
alternatively established by the Potapov's method of the Fundamental
Matrix Inequalities. As we will see, this method bypasses the winding number argument.

\smallskip

Consider a $\Pi_{+}$-admissible BTOA interpolation data set ${\mathfrak D}$ as in 
\eqref{data} giving rise to the  collection  \eqref{BTOAint1}, 
\eqref{BTOAint2}, \eqref{BTOAint3} of BTOA interpolation conditions 
imposed on a Schur-class function $S^{p \times m}(\Pi_{+})$.
We assume that $\bGamma_{\mathfrak D}$ is positive definite.  We form 
the matrix $\Theta(\lambda)$ as in \eqref{oct1}--\eqref{oct2}  and 
assume all knowledge of all the properties of $\Theta$ falling out of 
the positive-definiteness of $\bGamma_{\mathfrak D}$, specifically
\eqref{bigLySyl}--\eqref{Theta22inv} above.

\smallskip

The main idea is to extend the interpolation data by one extra interpolation node 
$z\in\Pi_+$ with the corresponding full-range value $S(z)$, i.e., by 
the tautological full-range interpolation condition
\begin{equation}  \label{generic-int}
 S(z) = S(z)
\end{equation}
where $z$ is a generic point in the right half plane.  
To set up this augmented problem as a BTOA problem, we have a choice 
as to how we incorporate the global generic interpolation condition 
\eqref{generic-int} into the BTOA formalism:  (a) 
as a LTOA interpolation condition:
\begin{equation}  \label{genericLTOA}
   (X_{z} S)^{\wedge L}(Z_{z}) = Y_{z} \;  \text{ where } \; X_{z} = I_{p}, 
   \, Y_{z} = S(z), \, Z_{z} = z I_{p},
\end{equation}
or  as a RTOA interpolation condition:
\begin{equation}   \label{genericRTOA}
    (S U_{z})^{\wedge R}(W_{z}) = V_{z} \; \text{ where } \;
    U_{z} = I_{m}, \, V_{z} = S(z), \, W_{z} = z I_{m}.
\end{equation}
We  choose here to work with the left versions \eqref{genericLTOA} exclusively; working 
with the right version \eqref{genericRTOA} will give seemingly different but in the end 
equivalent parallel results.  

\smallskip

As a first step, we wish to combine \eqref{BTOAint1} and 
\eqref{genericLTOA} into a single LTOA interpolation condition.
This is achieved by augmenting the matrices $(Z, X, Y)$ to the 
augmented triple $(Z_{\rm aug}, X_{\rm aug}, Y_{\rm aug})$ given by
$$
Z_{\rm aug} = \begin{bmatrix} Z & 0 \\ 0 & z I_{p} \end{bmatrix}, 
\quad
X_{\rm aug} = \begin{bmatrix} X \\ I_{p} \end{bmatrix}, \quad
Y_{\rm aug} = \begin{bmatrix} Y \\ S(z) \end{bmatrix}.
$$
Here all matrices indexed by aug depend on the parameter $z$, but for 
the moment we suppress this dependence from the notation.
As the RTOA-interpolation conditions for the augmented problem remain 
the same as in the original problem (namely, \eqref{BTOAint2}), we set
$$
U_{\rm aug} = U, \quad V_{\rm aug} = V, \quad W_{\rm aug} = W.
$$
We therefore take the \textbf{augmented data set} ${\mathfrak D}_{\rm 
aug}$ to have the form
\begin{equation} \label{Daug}
    {\mathfrak D}_{\rm aug} = (X_{\rm aug}, Y_{\rm aug}, Z_{\rm aug}; 
    U_{\rm aug}, V_{\rm aug}, W_{\rm aug}; \Gamma_{\rm aug})
\end{equation}
where the coupling matrix $\Gamma_{\rm aug}$ is still to be 
determined.

\smallskip

We know that $\Gamma_{\rm aug}$ must solve the Sylvester equation
\eqref{Sylvester'} associated with the data set ${\mathfrak D}_{\rm 
aug}$, i.e., $\Gamma_{\rm aug}$ must have the form $\Gamma_{\rm aug} 
= \sbm{ \Gamma_{{\rm aug},1} \\ \Gamma_{{\rm aug},2} }$  with
$$    \begin{bmatrix} \Gamma_{{\rm aug},1} \\ \Gamma_{{\rm aug},2} 
    \end{bmatrix} W - \begin{bmatrix} Z & 0 \\ 0 & z I_{p} 
\end{bmatrix} \begin{bmatrix} \Gamma_{{\rm aug},1} \\ \Gamma_{{\rm aug},2} 
    \end{bmatrix} = 
   \begin{bmatrix} X \\ I_{p} \end{bmatrix} V - \begin{bmatrix} Y \\ 
       S(z) \end{bmatrix} U.
$$
Equivalently, $\Gamma_{\rm aug} = \sbm{ \Gamma_{{\rm aug},1} \\ 
\Gamma_{{\rm aug},2}}$ is determined by the decoupled system of equations
\begin{align}  
   &  \Gamma_{{\rm aug}, 1} W - Z \Gamma_{{\rm aug}, 1} = X V - Y U, 
   \notag \\
& \Gamma_{{\rm aug},2} W - (z I_{p}) \Gamma_{{\rm aug}, 2} = V - S(z) U.
\label{aug-Sylvester''}
\end{align}
In addition, the third augmented interpolation condition takes the 
form
$$
 \left( \begin{bmatrix} X \\ I_{p} \end{bmatrix} S \, U 
 \right)^{\wedge L,R} \left( \begin{bmatrix} Z & 0 \\ 0 & zI_{p} 
\end{bmatrix}, W \right) = \begin{bmatrix} \Gamma_{{\rm aug}, 1} \\
\Gamma_{{\rm aug}, 2} \end{bmatrix}
$$
which can be decoupled into two independent bitangential 
interpolation conditions
\begin{equation}   \label{coupledBTint}
    ( X S U)^{\wedge L,R}(Z,W) = \Gamma_{{\rm aug},1}, \quad
    (I_{p} S U)^{\wedge L,R}(z I_{p}, W ) = \Gamma_{{\rm aug},2}.
\end{equation}
From the first of the conditions \eqref{coupledBTint} coupled with 
the interpolation condition \eqref{BTOAint3}, we are forced to take
$  \Gamma_{{\rm aug}, 1} = \Gamma$.

\smallskip

Since the point $z \in \Pi_{+}$ is generic, we may assume as a first 
case that $z$ is disjoint from the spectrum $\sigma(W)$ of $W$.  Then 
we can solve the second of the equations \eqref{aug-Sylvester''} 
uniquely for $\Gamma_{{\rm aug}, 2}$:
\begin{equation} \label{Gammaaug2}
\Gamma_{{\rm aug},2} = (S(z) U - V) (zI_{n_{W}} - W)^{-1}.
\end{equation}
A consequence of the RTOA interpolation condition \eqref{BTOAint2} is 
that the right-hand side of \eqref{Gammaaug2} has analytic 
continuation to all of $\Pi_{+}$.  It is not difficult to see that 
$(I_{p} S U)^{L,R}(z I_{p}, W)$ in general is just the value of this 
analytic continuation at the point $z$; we conclude that the formula 
\eqref{Gammaaug2} holds also at points $z$ in $\sigma(W)$ with proper 
interpretation.  In this way we have completed the computation of the 
augmented data set  \eqref{Daug}:
\begin{equation}   \label{Daug'}
    {\mathfrak D}(z):= {\mathfrak D}_{\rm aug} = 
 \left(   \begin{bmatrix} Z & 0 \\ 0 & zI_{p} \end{bmatrix}, \,
\begin{bmatrix} X \\ I_{p} \end{bmatrix}, \, 
\begin{bmatrix} Y \\ S(z) \end{bmatrix}; \, U,V, W; \begin{bmatrix} 
    \Gamma \\ T_{S,1}(z) \end{bmatrix} \right).
\end{equation}
where we set
\begin{equation}   \label{TS1}
  T_{S,1}(z) =  (S(z) U - V) (zI_{n_{W}} - W)^{-1}.
\end{equation}

We next compute the Pick matrix $\bGamma_{{\mathfrak D}_{\rm 
aug}(z)}$ for the augmented data set ${\mathfrak D}_{\rm aug}$ 
\eqref{Daug'} according to the recipe \eqref{GammaL}--\eqref{GammaData}.  
Thus 
$$
\bGamma_{{\mathfrak D}_{\rm aug}(z)} = 
\begin{bmatrix}\Gamma_{{\rm aug}, L} & \Gamma_{\rm aug} \\ (\Gamma_{\rm 
aug})^{*} & \Gamma_{{\rm aug}, R}\end{bmatrix},\quad\mbox{where}\quad
\Gamma_{\rm aug} = \begin{bmatrix} \Gamma \\ T_{S,1}(z) \end{bmatrix}, 
\; \Gamma_{{\rm aug},R} = \Gamma_{R},
$$
and where $\Gamma_{{\rm aug}, L} = \sbm{\Gamma_{{\rm aug}, L11} & 
\Gamma_{{\rm aug}, L12} \\ \Gamma_{{\rm aug}, L21} & \Gamma_{{\rm 
aug}, L22 }}$ is determined by the Lyapunov equation \eqref{GammaL} 
adapted to the interpolation data set ${\mathfrak D}(z)$:
\begin{align*}
&\begin{bmatrix} \Gamma_{{\rm aug}, L11} & \Gamma_{{\rm aug}, L12} \\ 
    \Gamma_{{\rm aug}, L21} & \Gamma_{{\rm aug}, L22} \end{bmatrix}
\begin{bmatrix} Z^{*} & 0 \\ 0 & \overline{z} I_{p} \end{bmatrix} +
\begin{bmatrix} Z & 0 \\ 0 & z I_{p} \end{bmatrix}
\begin{bmatrix} \Gamma_{{\rm aug}, L11} & \Gamma_{{\rm aug}, L12} \\ 
    \Gamma_{{\rm aug}, L21} & \Gamma_{{\rm aug}, L22} \end{bmatrix} \\
& \quad = \begin{bmatrix} X \\ I_{p} \end{bmatrix} \begin{bmatrix} X^{*} & 
 I_{p} \end{bmatrix} - \begin{bmatrix} Y \\ S(z) \end{bmatrix}
\begin{bmatrix} Y^{*} & S(z)^{*} \end{bmatrix}.
\end{align*}
One can solve this equation uniquely for $\Gamma_{{\rm aug}, Lij}$ ($i,j=1,2$) 
with the result
\begin{align*}
 & \Gamma_{{\rm aug}, L11} = \Gamma_{L}, \quad \Gamma_{{\rm aug}, L21} 
  = (\Gamma_{{\rm aug}, L12})^{*}  = T_{S,2}(z), \notag \\
&  \Gamma_{{\rm aug},L22} = \frac{I - S(z) S(z)^{*}}{z + \overline{z}}
\end{align*}
where we set
\begin{equation}   \label{TS2}
   T_{S,2}(z): = (X^{*} - S(z) Y^{*})(zI_{n_{Z}} + Z^{*})^{-1}.
\end{equation}
In this way we arrive at the  Pick matrix for data set ${\mathfrak 
D}(z)$, denoted for convenience as $\bGamma_{\mathfrak D}(z)$ 
rather than as $\bGamma_{{\mathfrak D}(z)}$:
$$
\bGamma_{\mathfrak D}(z) = \begin{bmatrix} \Gamma_{L} & 
T_{S,2}(z)^{*} & \Gamma \\ T_{S,2}(z) & \frac{ I - S(z) S(z)^{*}}{ z + 
\overline{z}} & T_{S,1}(z) \\ \Gamma^{*} & T_{S,1}(z)^{*} & 
\Gamma_{R} \end{bmatrix}.
$$
If we interchange the second and third rows and then also the second 
and third columns (i.e., conjugate by a permutation matrix), we get a 
new matrix having the same inertia; for simplicity from now on we use 
the same notation $\bGamma_{\mathfrak D}(z)$ for this transformed 
matrix:
$$
  \bGamma_{{\mathfrak D}}(z) = \begin{bmatrix} \Gamma_{L} & \Gamma & 
  T_{S,2}(z)^{*} \\ \Gamma^{*} & \Gamma_{R} & T_{S,1}(z)^{*} \\
  T_{S,2}(z) & T_{S,1}(z) & \frac{ I - S(z) S(z)^{*}}{z + 
  \overline{z}} \end{bmatrix}.
$$

Had we started with a finite number $\bz = \{z_{1}, \dots, z_{N}\}$ 
of generic interpolation nodes in $\Pi_{+}$ rather than a single 
generic point $z$ and augmented the interpolation conditions 
\eqref{BTOAint1}, \eqref{BTOAint2}, \eqref{BTOAint3} with the 
collection of tautological interpolation conditions
$$
  S(z_{i}) = S(z_{i}) \; \text{ for } \; i = 1, \dots, N
$$
modeled as the additional LTOA interpolation condition
$$
  (X_{\bz} S)^{\wedge L}(Z_{\bz}) = Y_{\bz}
$$
where
$$
Z_{\bz} = \sbm{ z_{1}I_{p} & & 0 \\ & \ddots & \\ 0 & & z_{N} I_{p} }, \quad
X_{\bz} = \sbm{I_{p} \\ \vdots \\ I_{p} }, 
\quad Y_{\bz} = \sbm{ S(z_{1}) \\ \vdots \\ S(z_{N})},
$$
the same analysis as above would lead us to the following conclusion: 
{\em there is a matrix function $S$ in the Schur class $\cS^{p \times 
m}(\Pi_{+})$ satisfying the interpolation conditions 
\eqref{BTOAint1}, \eqref{BTOAint2}, \eqref{BTOAint3} if and only if, 
for $\bz = \{z_{1}, \dots, z_{N}\}$ any collection of $N$ distinct 
points in $\Pi_{+}$, the associated augmented Pick matrix 
$\bGamma_{\mathfrak D}(\bz)$ is positive-semidefinite, where}
$$
\bGamma_{\mathfrak D}(\bz) =
\begin{bmatrix} \Gamma_{L} & \Gamma & T_{S,2}(z_{1})^{*} & \cdots & 
    T_{S,2}(z_{N})^{*} \\
\Gamma^{*} & \Gamma_{R} & T_{S,1}(z_{1})^{*} & \cdots & 
T_{S,1}(z_{N})^{*} \\
T_{S,2}(z_{1}) & T_{S,1}(z_{1}) & 
\frac{I - S(z_{1})S(z_{1})^{*}}{z_{1} + \overline{z}_{1}} & \cdots  &
\frac{I - S(z_{1}) S(z_{N})^{*}}{z_{1} + \overline{z}_{N}}  \\
\vdots & \vdots & \vdots & \ddots & \vdots \\
T_{S,2}(z_{N}) & T_{S,1}(z_{N}) & 
\frac{ I - S(z_{N}) S(z_{1})^{*}}{z_{1} + \overline{z}_{N}} & \cdots & 
\frac{ I - S(z_{N}) S(z_{N})^{*}}{z_{N} + \overline{z}_{N}} \end{bmatrix} \succeq 0.
$$
As the finite set of points $\bz = \{z_{1}, \dots, z_{N}\}$ ($N=1,2, 
\dots$) is an arbitrary finite subset of $\Pi_{+}$, this condition in 
turn amounts to the assertion that the kernel 
$\bGamma_{\mathfrak D}(z, \zeta)$ defined by
\begin{equation}  \label{sep8a}
  \bGamma_{\mathfrak D}(z, \zeta) = \begin{bmatrix}  \Gamma_{L} & \Gamma & 
  T_{S,2}(\zeta)^{*} \\
 \Gamma^{*} & \Gamma_{R} & T_{S,1}(\zeta)^{*} \\
 T_{S,2}(z) & T_{S,1}(z) & \frac{ I - S(z) S(\zeta)^{*}}{z + 
 \overline{\zeta}}  \end{bmatrix}
\end{equation}
is a positive kernel on $\Pi_{+}$ (see \eqref{posker}).
Observe from \eqref{TS2}, \eqref{TS1} that
\begin{align*}
\begin{bmatrix}T_{S,2}(z) & T_{S,1}(z)\end{bmatrix}&=
- \begin{bmatrix}I_p & -S(z)\end{bmatrix}\begin{bmatrix}
 -X^{*} & V \\ -Y^{*} & U \end{bmatrix}
 \begin{bmatrix}(zI+Z^*)^{-1} &0 \\ 0 & (zI-W)^{-1}\end{bmatrix}\notag\\
&= - \begin{bmatrix}I_p & -S(z)\end{bmatrix}{\bf C}(z I-{\bf A})^{-1},
\end{align*}
where ${\bf C}$ and ${\bf A}$ are defined as in \eqref{oct2}. Taking the latter 
formula into account, we way write
\eqref{sep8a} in a more structured form as
\begin{equation} \label{sep8c}
\bGamma_{\mathfrak D}(z,\zeta)=\begin{bmatrix} \Gamma_{\mathfrak D} &
-(\overline\zeta I-{\bf A}^*)^{-1}
{\bf C}^*\begin{bmatrix}I_p \\ -S(\zeta)^*\end{bmatrix}\\
-\begin{bmatrix}I_p & -S(z)\end{bmatrix}{\bf C}(z I-{\bf A})^{-1}&
{\displaystyle\frac{I - S(z)S(\zeta)^*}{z+\overline{\zeta}}}\end{bmatrix}.
\end{equation}
Since the matrix $\bGamma_{\mathfrak D}$ is positive definite, the kernel \eqref{sep8c} 
is positive if and only if the 
Schur complement of $\Gamma_{\mathfrak D}$ is a positive kernel on  
$\Pi_+\backslash  \sigma(W)$ 
and therefore, admits a unique positive extension to the whole $\Pi_+$:
$$
\frac{I - S(z)S(\zeta)^*}{z+\overline{\zeta}}-
\begin{bmatrix}I_p & -S(z)\end{bmatrix}{\bf C}(z I-{\bf A})^{-1}\Gamma_{\mathfrak D}^{-1}
(\overline\zeta I-{\bf A}^*)^{-1}{\bf C}^*\begin{bmatrix}I_p \\ 
-S(\zeta)^*\end{bmatrix}\succeq 0.
$$
The latter can be written as
$$
\begin{bmatrix}I_p & -S(z)\end{bmatrix}\left\{\frac{J}{z+\overline{\zeta}}-
{\bf C}(z I-{\bf A})^{-1}\Gamma_{\mathfrak D}^{-1}
(\overline\zeta I-{\bf A}^*)^{-1}{\bf C}^*\right\}
\begin{bmatrix}I_p \\ -S(\zeta)^*\end{bmatrix}\succeq 0,
$$
and finally, upon making use of \eqref{ad1}, as
\begin{equation}
\begin{bmatrix}I_p & -S(z)\end{bmatrix}\frac{\Theta(z)J\Theta(\zeta)^*}
    {z+\overline{\zeta}}
\begin{bmatrix}I_p \\ -S(\zeta)^*\end{bmatrix}\succeq 0.
\label{sep8d}
\end{equation}
We next define two functions $Q_1$ and $Q_2$ by the formula
\begin{equation}
\begin{bmatrix}Q_2(z) & -Q_1(z)\end{bmatrix}=\begin{bmatrix}I_p & -S(z)\end{bmatrix}
\begin{bmatrix}\Theta_{11}(z) & \Theta_{12}(z)\\ \Theta_{21}(z) & \Theta_{22}(z)
\end{bmatrix},   
\label{sep8f}
\end{equation}
and write \eqref{sep8d} in terms of these functions as
$$
\begin{bmatrix}Q_2(z) & -Q_1(z)\end{bmatrix}\frac{J}{z+\overline{\zeta}}
\begin{bmatrix}Q_2(\zeta)^* \\ -Q_1(\zeta)^*\end{bmatrix}=
\frac{Q_2(z)Q_2(\zeta)^*-Q_1(z)Q_1(\zeta)^*}{z+\overline{\zeta}}
\succeq 0.
$$
By Leech's theorem \cite{leech}, there exists a Schur-class function 
$G\in\mathcal S^{p\times m}(\Pi_+)$ such that 
$$
Q_2 G=Q_1,
$$
which, in view of \eqref{sep8f} can be written as 
$$
(\Theta_{11}-S\Theta_{21})G=S\Theta_{22}-\Theta_{12},
$$
or equivalently, as
\begin{equation}   \label{sep8g}
    S (\Theta_{21} G + \Theta_{22}) = \Theta_{11} G + \Theta_{12}.
\end{equation}
Note that $\Theta_{22}(z)$ is invertible and that 
$\Theta_{22}(z)^{-1} \Theta_{21}(z)$ is strictly contractive on all of 
$\Pi_{+} \setminus \sigma(W)$ (and then on all of $\Pi_{+}$ by 
analytic continuation) as a consequence of the bullet immediately 
after \eqref{Theta22invTheta21} above.  As $G$ is in the Schur class 
and hence is contractive on all of $\Pi_{+}$, 
it follows that $\Theta_{22}(z)^{-1}\Theta_{21}(z)G(z)+I_m$ is 
invertible on all of $\Pi_{+}$.
Hence 
$$\Theta_{21}(z)G(z)+\Theta_{22}(z)=\Theta_{22}(z)(\Theta_{22}(z)^{-1}
\Theta_{21}(z)G(z)+I_m)
$$ 
is invertible for all $z\in\Pi_+$ and we can solve \eqref{sep8g} for $S$ 
arriving at at the formula \eqref{LFTparam}. 

\begin{remark} \label{R:FMI}  Note that in this Potapov approach to 
    the derivation of the linear-fractional parametrization via the 
    Fundamental Matrix Inequality, the winding-number argument 
    appearing in the state-space approach never appears.  What 
    apparently replaces it, once everything is properly organized, is 
    the theorem of Leech.
\end{remark}

\subsection{Positive kernels and reproducing kernel Hilbert spaces}  \label{S:RKHS}
Assume now that we are given a $\Pi_{+}$-admissible interpolation data 
set and that the PIck matrix $\bGamma_{\mathfrak D}$ is invertible. 
Then one can define the matrix function $\Theta(z)$ as in 
\eqref{oct1} and then $K_{\Theta,J}(z, \zeta) = 
\frac{J - \Theta(z) J \Theta(\zeta)^{*}}{z + \overline{\zeta}}$ is 
given by \eqref{ad1}. A straightforward computation then shows that, 
for any $N=1,2,\dots$ with points $z_{1}, \dots, z_{N}$ in 
$\Pi_{+}\setminus \sigma(W)$ and vectors $\by_{1}, \dots, \by_{N}$ in 
${\mathbb C}^{p+m}$, we have
\begin{align*}
&\sum_{i,j=1}^{N} \langle K_{\Theta,J}(z_{i}, z_{j}) \by_{j}, \by_{i} 
\rangle_{{\mathbb C}^{p+m}}  = \\
& \quad = \sum_{i,j=1}^{N} \bigg\langle \bGamma_{\mathfrak D}^{-1}
\bigg( \sum_{i=1}^{N} (\overline{z}_{i} I - \bA^*)^{-1}\bC^{*} _{i} 
\bigg), \; \sum_{j=1}^{N} (\overline{z}_{j} I - \bA^*)^{-1}\bC^{*} _{j} 
 \bigg\rangle, 
\end{align*}
and hence
 $K_{\Theta,J}$ is a positive kernel on $\Pi_{+} \setminus \sigma(W)$ 
 if $\bGamma_{\mathfrak D} \succ 0$.  More generally, if 
 $\bGamma_{\mathfrak D}$ has some number $\kappa$ of negative 
 eigenvalues, then for any choice of points $z_{1}, \dots, z_{N} \in 
 \Pi_{+} \setminus \sigma(W)$ the block Hermitian matrix
\begin{equation} \label{KThetaJblock}
 \left[ K_{\Theta,J}(z_{i}, z_{j}) \right]_{i,j=1, \dots, N}
 \end{equation}
has at most $\kappa$ negative eigenvalues.  If we impose the controllability and observability 
assumptions on the matrix pairs $(U,W)$ and $(Z,X)$,  then there 
exist a choice of $z_{1}, \dots, z_{N} \in \Pi_{+} \setminus 
\sigma(W)$ so that the matrix \eqref{KThetaJblock} has exactly 
$\kappa$ negative eigenvalues, in which case we say that $\Theta$ is 
in the generalized $J$-Schur class $\cS_{J, \kappa}(\Pi_{+})$ 
(compare with the Kre\u{\i}n-Langer generalized Schur class discussed 
at the beginning of Section \ref{S:negsquares} below).  In the case 
where $\Theta \in \cS_{J, \kappa}(\Pi_{+})$ with $\kappa > 0$, there 
is still associated a space of functions $\cH(K_{\Theta,J})$ as in 
\eqref{RKHSa}--\eqref{RKHSb}; the space $\cH(K_{\Theta,J})$ is 
now a {\em Pontryagin space} with negative index equal to $\kappa$
(see Section \ref{S:Kreinprelim} for background on Pontryagin and 
Kre\u{\i}n spaces).  In any case, in this way we arrive at yet another 
interpretation of the condition that $\bGamma_{\mathfrak D}$ be 
positive definite.

\begin{theorem} \label{T:RKHS}
    Assume that we are given a $\Pi_{+}$-admissible interpolation 
    data set with $\Gamma_{\mathfrak D}$ is invertible (so  
    $\Theta$ and $K_{\Theta,J}$ are defined).  Then 
    $\cH(K_{\Theta,J})$ is a Hilbert space if and only if 
    $\bGamma_{\mathfrak D} \succ 0$.
\end{theorem}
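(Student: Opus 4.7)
The plan is to exploit the explicit Kolmogorov-type factorization of $K_{\Theta,J}$ given by formula \eqref{ad1}, which realizes $K_{\Theta,J}$ as $H(\lambda) \bGamma_{\mathfrak D}^{-1} H(\zeta)^*$ with $H(\lambda) = \bC (\lambda I - \bA)^{-1}$. The central idea is to show that this factorization identifies $\cH(K_{\Theta,J})$ isometrically with the finite-dimensional space $\mathbb{C}^{n_Z + n_W}$ equipped with the Hermitian form determined by $\bGamma_{\mathfrak D}$ itself (up to inversion). Once this identification is in place, the equivalence between $\cH(K_{\Theta,J})$ being a Hilbert space and $\bGamma_{\mathfrak D} \succ 0$ is immediate, since positive definiteness of the ambient gram matrix is the same property on either side.

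The first step is to verify that the ``state-to-function'' map $x \mapsto H(\cdot) x$ is injective on $\mathbb{C}^{n_Z + n_W}$. Writing $x = \sbm{x_1 \\ x_2}$, the two column blocks of $H(\lambda)$ have poles in the disjoint sets $-\sigma(Z^*) \subset \Pi_{-}$ and $\sigma(W) \subset \Pi_{+}$ respectively, so $H(\lambda) x \equiv 0$ decouples. Controllability of $(Z,X)$ (equivalently, observability of $(-X^*,-Z^*)$) forces $x_1 = 0$, while observability of $(U,W)$ forces $x_2 = 0$. Dually, the same argument shows that $\{H(\zeta)^* y : \zeta \in \Pi_+ \setminus (\sigma(Z^*)\cup\sigma(W)),\, y \in \mathbb{C}^{p+m}\}$ has linear span equal to all of $\mathbb{C}^{n_Z+n_W}$.

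Next, one identifies $\cH(K_{\Theta,J})$. Since $\bGamma_{\mathfrak D}^{-1}$ is invertible, the kernel sections $K_{\Theta,J}(\cdot,\zeta) y = H(\cdot)\bGamma_{\mathfrak D}^{-1} H(\zeta)^* y$ range over $H(\cdot) \cdot \bigl(\bGamma_{\mathfrak D}^{-1}\cdot \mathbb C^{n_Z+n_W}\bigr) = H(\cdot)\cdot \mathbb C^{n_Z+n_W}$, which (together with the injectivity above) means $\cH(K_{\Theta,J})$ is exactly the finite-dimensional space $\{H(\cdot) x : x \in \mathbb{C}^{n_Z+n_W}\}$. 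Applying the reproducing property
$$
[H(\cdot) x,\, K_{\Theta,J}(\cdot,\zeta) y]_{\cH(K_{\Theta,J})} = \langle H(\zeta) x, y\rangle = \langle x, H(\zeta)^* y\rangle
$$
and parametrizing a generic element of the kernel span as $H(\cdot) w$ with $w = \bGamma_{\mathfrak D}^{-1} H(\zeta)^* y$, one reads off
$$
[H(\cdot) x,\, H(\cdot) w]_{\cH(K_{\Theta,J})} = \langle x, \bGamma_{\mathfrak D} w\rangle_{\mathbb{C}^{n_Z+n_W}},
$$
valid on a spanning set and hence on all of $\mathbb{C}^{n_Z+n_W}$.

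This identification immediately gives both directions: the indefinite inner product on $\cH(K_{\Theta,J})$ is unitarily equivalent (via the isomorphism $x \mapsto H(\cdot)x$) to the Hermitian form $\langle \cdot, \bGamma_{\mathfrak D}\, \cdot\rangle$, so $\cH(K_{\Theta,J})$ is a Hilbert space precisely when $\bGamma_{\mathfrak D} \succ 0$. The main potential pitfall is conceptual rather than computational: one must be comfortable treating $\cH(K_{\Theta,J})$ as a Pontryagin reproducing kernel space when $\bGamma_{\mathfrak D}$ is indefinite, so that the reproducing identity is interpreted with the intrinsic (possibly indefinite) inner product. Because everything happens in finite dimensions, no completion or topological subtleties arise, and the indefinite Pontryagin-space theory reduces to straightforward linear algebra with the gram matrix $\bGamma_{\mathfrak D}$.
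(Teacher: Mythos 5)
Your proof is correct and follows essentially the same approach as the paper's own argument in Section \ref{S:RKHS}: both use the explicit factorization $K_{\Theta,J}(\lambda,\zeta) = \bC(\lambda I-\bA)^{-1}\bGamma_{\mathfrak D}^{-1}(\overline\zeta I-\bA^*)^{-1}\bC^*$ together with the observability of $(\bC,\bA)$ (i.e.\ controllability of $(Z,X)$ and observability of $(U,W)$) to transfer the inertia of $\bGamma_{\mathfrak D}$ to that of the kernel. You go slightly further by exhibiting the explicit isometric identification $x\mapsto H(\cdot)x$ of $\cH(K_{\Theta,J})$ with $(\mathbb C^{n_Z+n_W},\,\langle\cdot,\bGamma_{\mathfrak D}\cdot\rangle)$, but this is precisely the content the paper is invoking implicitly when it asserts that the number of negative squares of $K_{\Theta,J}$ equals $\nu_-(\bGamma_{\mathfrak D})$.
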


In Section \ref{S:synthesis} below (see display \eqref{HKThetaJ})
we shall spell this criterion out in more detail and arrive at another 
condition equivalent to positive-definiteness 
of the Pick matrix $\bGamma_{\mathfrak D}$.

\section{The Grassmannian/Kre\u{\i}n-space-geometry approach to the 
BTOA-NP interpolation problem}  
\label{S:proof2}
In this section we sketch the Grassmannian/Kre\u{\i}n-space geometry 
proof of Theorem \ref{T:BTOA-NP} based on the work in \cite{BH}---see 
also \cite{Ball-Indiana} for a more expository account and \cite{BF} 
for a more recent overview which also highlights the method in 
various multivariable settings.  These treatments work with the 
Sarason \cite{Sarason67} or Model-Matching \cite{Francis} formulation 
of the Nevanlinna-Pick interpolation problem, while we work with the 
LTOA-interpolation formulation.
The translation between the two is given in \cite[Chapter 16]{BGR}
(where the Sarason/Model Matching formulation is called 
{\em divisor-remainder form}.

\subsection{Kre\u{\i}n-space preliminaries}  \label{S:Kreinprelim}
Let us first review a few preliminaries concerning Kre\u{\i}n spaces.
A Kre\u{\i}n space by definition is a linear space $\cK$ endowed
with an indefinite inner product $[ \cdot, \cdot]$ which is
{\em complete} in the following sense:  there are two subspaces
$\cK_{+}$ and $\cK_{-}$ of $\cK$ such that the restriction of $[
\cdot, \cdot ]$ to $\cK_{+} \times \cK_{+}$ makes $\cK_{+}$ a
Hilbert space while the restriction of $-[ \cdot, \cdot]$ to
$\cK_{-} \times \cK_{-}$ makes $\cK_{-}$ a Hilbert space, and 
\begin{equation}
\cK = \cK_{+} [\dot +] \cK_{-}
\label{dec}
\end{equation}
is a $[ \cdot, \cdot]$-orthogonal direct sum decomposition
of $\cK$.  In this case the decomposition \eqref{dec}
is said to form a {\em fundamental decomposition} for $\cK$.
Fundamental decompositions are never unique except in the trivial
case where one of $\cK_{+}$ or $\cK_{-}$ is equal to the zero space.
If $\min(\dim \cK_{+}, \, \dim \cK_{-})=\kappa<\infty$, then 
$\cK$ is called a Pontryagin space of index $\kappa$.

\smallskip

Unlike the case of Hilbert spaces where closed subspaces all look
the same, there is a rich geometry for subspaces of a Kre\u{\i}n
space. A subspace $\cM$ of a Kre\u{\i}n space $\cK$ is said to be
{\em positive}, {\em isotropic}, or {\em negative} depending on
whether $[u, u] \ge 0$ for all $u \in \cM$, $[ u, u ] =0$ for all
$u \in \cM$ (in which case it follows that $[u,v] = 0$ for all
$u,v \in \cM$ as a consequence of the Cauchy-Schwarz inequality),
or $[u,u] \le 0$ for all $u \in \cM$.  Given any subspace $\cM$,
we define the Kre\u{\i}n-space orthogonal complement
$\cM^{[\perp]}$ to consist of all $v \in \cK$ such that $[u,v] =
0$ for all $u \in \cK$.  Note that the statement that $\cM$ is
isotropic is just the statement that $\cM \subset \cM^{[ \perp
]}$.  If it happens that $\cM = \cM^{[ \perp]}$, we say that $\cM$
is a {\em Lagrangian} subspace of $\cK$.  Simple examples show that 
in general, unlike the Hilbert space case, it can happen that $\cM$ 
is a closed subspace of the Kre\u{\i}n space $\cK$ yet the space 
$\cK$ cannot be split at the $\cK$-orthogonal direct sum of $\cM$ and 
$\cM^{[\perp]}$ (e.g., this happens dramatically if $\cM$ is an 
isotropic subspace of $\cK$). If $\cM$ is a subspace of $\cK$ for which this does 
happen, i.e., such that $\cK = \cM [+] \cM^{[\perp]}$, we say 
that $\cM$ is a {\em regular subspace} of $\cK$.

\smallskip

Examples of such subspaces arise from placing appropriate
Kre\u{\i}n-space inner products on the direct sum $\cH^\prime \oplus
\cH$ of two Hilbert spaces and looking at graphs of operators of
an appropriate class.

\begin{example}  \label{E:unitary}   Suppose that $\cH'$ and
    $\cH$ are two Hilbert spaces and we take $\cK$ to be the
    external direct sum $\cH' \oplus \cH$ with inner product
$$
    \left[ \begin{bmatrix} x \\ y \end{bmatrix}, \,
    \begin{bmatrix} x' \\ y' \end{bmatrix} \right] =
    \left\langle \begin{bmatrix} I_{\cH'} & 0 \\ 0 & -I_{\cH}
    \end{bmatrix} \begin{bmatrix} x \\ y \end{bmatrix}, \,
    \begin{bmatrix} x' \\ y' \end{bmatrix} \right\rangle_{\cH' \oplus \cH}
$$
where $\langle \cdot, \cdot \rangle_{\cH' \oplus \cH}$ is the
standard Hilbert-space inner product on the direct-sum Hilbert
space $\cH' \oplus \cH$.  In this case it is easy to find a
fundamental decomposition: take $\cK_{+} = \sbm{ \cH \\ \{0\}}$
and $\cK_{-} = \sbm{ \{0\} \\ \cH' }$. Now let $T$ be a bounded
linear operator from $\cH$ to $\cH'$ and let $\cM$ be the graph of
$T$:
$$
  \cM = \cG_{T} = \left\{ \begin{bmatrix} T x \\ x \end{bmatrix}
  \colon x \in \cH \right\} \subset \cK.
  $$
  Then a nice exercise is to work out the following facts:
  \begin{itemize}
      \item $\cG_{T}$ is negative if and only if $\|T\| \le 1$, in 
      which case $\cG_{T}$ is {\em maximal negative}, i.e.,  the subspace
      $\cG_{T}$ is not contained in any strictly larger negative 
      subspace.

      \item $\cG_{T}$ is isotropic if and only if $T$ is isometric
      ($T^{*} T = I_{\cH}$).

      \item $\cG_{T}$ is Lagrangian if and only if $T$ is unitary:
      $T^{*} T = I_{\cH}$ and $T T^{*} = I_{\cH'}$.
   \end{itemize}
\end{example}

Let $\cM$ be a fixed subspace of a Kre\u{\i}n space $\cK$ and $\cG$ a 
closed subspace of $\cM$.  In order that $\cG$ be maximal negative as 
a subspace of $\cK$, it is clearly necessary that $\cG$ be maximal 
negative as a subspace of $\cM$.  The following lemma (see \cite{BH} or 
\cite{Ball-Indiana}
for the proof) identifies when the converse holds.

\begin{lemma}  \label{L:maxneg}  Suppose that $\cM$ is a closed 
    subspace of a Kre\u{\i}n-space $\cK$ and $\cG$ is a negative 
    subspace of $\cM$.  Then a subspace $\cG \subset \cM$ which is maximal-negative 
    as a subspace of $\cM$ is automatically also maximal negative as 
    a subspace of $\cK$  if and only if the Kre\u{\i}n-space 
    orthogonal complement
    $$
    \cK [-] \cM = \{ k \in \cK \colon [k, m ]_{\cK} = 0 \text{ for 
    all } m \in \cM\}
    $$
    is a positive subspace of $\cK$.
\end{lemma}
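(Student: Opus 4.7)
The plan is to prove the two implications separately, using a characterization of maximal negativity via positivity of the Kreĭn orthogonal complement.

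\textbf{Forward direction (by contrapositive).} Suppose $\cM^{[\perp]}$ is not positive, so we can pick $v_{0}\in\cM^{[\perp]}$ with $[v_{0},v_{0}]<0$. Since every vector in $\cM\cap\cM^{[\perp]}$ satisfies $[v,v]=0$, the inequality $[v_{0},v_{0}]<0$ forces $v_{0}\notin\cM$, and in particular $v_{0}\notin\cG$ for any $\cG\subset\cM$. Now take such a $\cG$ that is maximal negative in $\cM$ and consider the strictly larger subspace $\cG+\BC v_{0}$. For $g\in\cG\subset\cM$ and $\alpha\in\BC$, the cross terms $[v_{0},g]$ and $[g,v_{0}]$ vanish because $v_{0}\in\cM^{[\perp]}$, so $[g+\alpha v_{0},g+\alpha v_{0}]=[g,g]+|\alpha|^{2}[v_{0},v_{0}]\le 0$. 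Thus $\cG+\BC v_{0}$ is a negative subspace of $\cK$ strictly containing $\cG$, showing $\cG$ is not maximal negative in $\cK$.

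\textbf{Converse direction.} Assume $\cM^{[\perp]}$ is positive and let $\cG\subset\cM$ be maximal negative in $\cM$. The plan is to invoke the standard Kreĭn-space criterion (see \cite{AI}) that a closed negative subspace of $\cK$ is maximal negative in $\cK$ if and only if its Kreĭn-orthogonal complement in $\cK$ is positive; with this in hand, it suffices to prove that $\cG^{[\perp]}$ is a positive subspace of $\cK$. Two positivity facts are at my disposal: (i) $\cM^{[\perp]}\subset\cG^{[\perp]}$ is positive by hypothesis, and (ii) $\cG^{[\perp]}\cap\cM$, which is the orthogonal complement of $\cG$ computed within $\cM$, is positive because $\cG$ is maximal negative in $\cM$ (the same criterion applied internally to $\cM$). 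I then combine (i) and (ii) to conclude positivity of $\cG^{[\perp]}$.

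\textbf{Combining the pieces in the regular case.} When $\cM$ is a regular subspace of $\cK$, so that $\cK=\cM[+]\cM^{[\perp]}$, every $v\in\cG^{[\perp]}$ splits uniquely as $v=v_{1}+v_{2}$ with $v_{1}\in\cM^{[\perp]}$ and $v_{2}\in\cM$; since $\cM^{[\perp]}\subset\cG^{[\perp]}$, one gets $v_{2}=v-v_{1}\in\cG^{[\perp]}\cap\cM$ automatically. The $[\cdot,\cdot]$-orthogonality of $\cM$ and $\cM^{[\perp]}$ then gives $[v,v]=[v_{1},v_{1}]+[v_{2},v_{2}]\ge 0$, so $\cG^{[\perp]}$ is positive and $\cG$ is maximal negative in $\cK$.

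\textbf{Main obstacle.} The hard part is the non-regular case, where the isotropic subspace $\cM_{0}:=\cM\cap\cM^{[\perp]}$ is nonzero and $\cM+\cM^{[\perp]}$ is a proper subspace of $\cK$, so the direct-sum decomposition above breaks down. The plan to handle this is to observe first that any maximal negative $\cG\subset\cM$ must contain $\cM_{0}$ (because $\cM_{0}$ is isotropic and $[\cdot,\cdot]$-orthogonal to $\cG$, so $\cG+\cM_{0}$ is still negative in $\cM$ and hence equals $\cG$ by maximality), so in particular $\cM_{0}\subset\cG\subset\cG^{[\perp]}$. One then fixes a fundamental decomposition of $\cK$ and uses the Hilbert-space orthogonal projection onto the closed subspace $\cM$ (this is the step that requires the closedness hypothesis on $\cM$) to construct, for each $v\in\cG^{[\perp]}$, a decomposition $v=v_{1}+v_{2}$ with $v_{1}\in\cM^{[\perp]}$ and $v_{2}\in\cM$ up to an error lying in $\cM_{0}$; that error is absorbed into either side without affecting the quadratic form, and the computation $[v,v]\ge 0$ proceeds as in the regular case.
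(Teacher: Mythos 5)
The paper does not prove Lemma \ref{L:maxneg} itself but refers to \cite{BH} and \cite{Ball-Indiana}, so I can only assess your argument on its own merits. Your forward direction (contrapositive) is correct, and so is the converse in the regular case: you correctly reduce to showing $\cG^{[\perp]}$ is positive via the standard criterion that a closed nonpositive subspace is maximal nonpositive iff its $\cK$-orthogonal complement is nonnegative, and you correctly note that the easy half of that criterion (maximality implies nonnegativity of the complement) needs no Kre\u{\i}n structure on $\cM$, so your fact (ii) is sound.

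The gap is in the final paragraph handling the non-regular case, and it is a real one. First, the Hilbert-space orthogonal projection $P_\cM$ (taken with respect to the norm from a fundamental decomposition $J$) produces a complement in $\cM^{\perp}=J\cM^{[\perp]}$, not in $\cM^{[\perp]}$; there is no $J$-orthogonal projection onto a non-regular $\cM$, so this device does not yield a decomposition $v=v_1+v_2$ with $v_1\in\cM^{[\perp]}$ and $v_2\in\cM$. Second, and more fundamentally, the ``error in $\cM_0$'' is vacuous: since $\cM_0\subset\cM\cap\cM^{[\perp]}$, absorbing it still requires $v\in\cM+\cM^{[\perp]}$, and that containment can fail. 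What one actually knows, from $\cM_0\subset\cG$ and biduality, is only $\cG^{[\perp]}\subset\cM_0^{[\perp]}=\overline{\cM+\cM^{[\perp]}}$, and in infinite dimensions $\cM+\cM^{[\perp]}$ can be a dense proper subspace of its closure (e.g.\ for a nonpositive graph $\cM=\{(Tx,x)\}$ with $T$ an injective strict contraction for which $I-T^*T$ has non-closed range). Thus the claimed pointwise decomposition of an arbitrary $v\in\cG^{[\perp]}$ is unavailable, and the ``computation proceeds as in the regular case'' does not go through. You would need a genuinely different argument here --- for instance, working directly with the angle operator of $\cG$ and a vector $\sbm{0\\ y}$ with $y\in\cK_-\ominus P_-\cG$, and showing that the simultaneous hypotheses force $y=0$ --- rather than a decomposition of $\cG^{[\perp]}$ relative to $\cM$ and $\cM^{[\perp]}$.
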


\subsection{The Grassmannian/Kre\u{\i}n-space approach to 
interpolation}
Suppose now that we are given a $\Pi_{+}$-admissible 
BTOA-interpolation data set as in \eqref{data}.
Let $\cM_{\mathfrak D} \subset L^{2}_{p+m}(i {\mathbb R})$ be as in 
\eqref{cMrep}. We view $\cM_{\mathfrak D}$ as a subspace of the 
Kre\u{\i}n space 
\begin{equation}    \label{defcK}  
\cK  =\begin{bmatrix} L^{2}_{p}(i {\mathbb R}) \\ \mathcal M_{\mathfrak D,-} 
\end{bmatrix}=
\begin{bmatrix} L^{2}_{p}(i {\mathbb R}) \\ \psi^{-1} 
    H^{2}_{m}(\Pi_{+}) \end{bmatrix}
\end{equation}
(where we use the notation in \eqref{cM-rep})
with Kre\u{\i}n-space inner product $[ \cdot, \cdot ]_{J}$ induced by the matrix $J = 
\sbm{ 
I_{p} & 0 \\ 0 & -I_{m} }$:
$$
\left[ \begin{bmatrix} f_{1} \\ f_{2}  \end{bmatrix}, \, 
\begin{bmatrix} g_{1} \\ g_{2}  \end{bmatrix} 
\right]_{J}: =
\langle f_{1}, \, g_{1} \rangle_{L^{2}_{p}(i {\mathbb R})} -
\langle f_{2}, \, g_{2} \rangle_{ \psi^{-1} H^{2}_{m}(\Pi_{+})}.
$$
A key subspace in the Kre\u{\i}n-space geometry approach to the BTOA-NP problem is 
the $J$-orthogonal complement of $\cM_{\mathfrak D}$ inside $\cK$:
\begin{equation}  \label{cPdata}
 \cM_{\mathfrak D}^{[ \perp \cK]} : = \cK [-]_{J} \cM_{\mathfrak D} = 
 \{ f \in \cK \colon [ f, g ]_{J} = 0 \text{ for all } g \in 
 \cM_{\mathfrak D} \}.
\end{equation}
We then have the following result.

\begin{theorem}  \label{T:BTOA-NP'} 
The BTOA-NP has a solution $S \in \cS^{p \times m}(\Pi_{+})$ if and 
only if the subspace $\cM_{\mathfrak D}^{[\perp \cK]}$ \eqref{cPdata} 
is a positive subspace of $\cK$ \eqref{defcK}, i.e.,
 $$
 [ f, f ]_{_J} \ge 0\; \text{ for all } \; f \in \cM_{\mathfrak 
 D}^{[\perp \cK]}.
 $$
 If it is the case that $\cM_{\mathfrak D}^{[\perp \cK]}$ is a Hilbert 
 space in the $\cK$-inner product, then there is rational  
 $J$-inner function $\Theta$ so that 
 \begin{enumerate}
     \item $\Theta$ provides a 
 Beurling-Lax representation for $\cM_{\mathfrak D}$ \eqref{cMBLrep}, 
 and
 \item the set of all Schur-class solutions $S \in \cS^{p \times 
 m}(\Pi_{+})$ of the interpolation conditions \eqref{BTOAint1}, 
 \eqref{BTOAint2}, \eqref{BTOAint3} is given by the linear-fractional 
 parametrization formula \eqref{LFTparam} with $G \in \cS^{p \times 
 m}(\Pi_{+})$.
 \end{enumerate}
 \end{theorem}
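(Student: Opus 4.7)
The plan is to rephrase the BTOA-NP interpolation problem geometrically inside $\cK$.  To a candidate $S$ I associate the \emph{graph subspace} $\cG_S := \sbm{S \\ I_m}\cdot \cM_{{\mathfrak D},-} \subset \cK$.  The already-established criterion \eqref{sol-rep} says $S$ satisfies the BTOA interpolation conditions if and only if $\cG_S\subset \cM_{\mathfrak D}$.  As in Example~\ref{E:unitary}, $\cG_S$ is maximal negative in $\cK$ precisely when multiplication by $S$ is a contraction from $\cM_{{\mathfrak D},-}\subset L^2_m(i{\mathbb R})$ into $L^2_p(i{\mathbb R})$; since $\cM_{{\mathfrak D},-}\supset H^2_m(\Pi_+)$, this in turn forces $\|S(\lam)\|\le 1$ a.e.\ on $i{\mathbb R}$ and, when coupled with analyticity on $\Pi_+$ (read off from the inclusion $\cG_S\subset \cM_{\mathfrak D}$), is equivalent to $S\in\cS^{p\times m}(\Pi_+)$.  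Thus a Schur-class solution exists if and only if $\cM_{\mathfrak D}$ contains some maximal negative subspace of $\cK$ of the special graph form $\cG_S$.

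For the existence criterion, necessity is immediate from Lemma~\ref{L:maxneg}: if $\cG_S\subset \cM_{\mathfrak D}$ is maximal negative in $\cK$, then a fortiori it is maximal negative in $\cM_{\mathfrak D}$, and the ``extension'' half of the lemma forces $\cM_{\mathfrak D}^{[\perp\cK]}$ to be positive.  (Contrapositively, any strictly negative vector in $\cM_{\mathfrak D}^{[\perp\cK]}$ could be adjoined to $\cG_S$ while preserving negativity, contradicting maximality in $\cK$.)  Conversely, assuming $\cM_{\mathfrak D}^{[\perp\cK]}$ is positive, I would first produce \emph{any} maximal negative subspace $\cG$ of $\cM_{\mathfrak D}$ (Zorn's lemma suffices) and invoke Lemma~\ref{L:maxneg} to upgrade $\cG$ to a maximal negative subspace of all of $\cK$.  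The remaining nontrivial task is to show $\cG$ can be arranged to have the realizable form $\cG_S$ for some $S\in\cS^{p\times m}(\Pi_+)$.

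This realization step is the main obstacle.  My plan is to exploit the fact that $\cM_{\mathfrak D}$ is invariant under multiplication by the Möbius inner function $\chi(\lam)=(\lam-1)/(\lam+1)$ and its inverse, and to arrange $\cG$ so as to inherit this shift-invariance (for instance by averaging over the unitary shift group in the Kre\u{\i}n-space sense, or by a direct construction using the Beurling--Lax representation in the next paragraph).  A shift-invariant maximal negative subspace of $\cK$ is automatically the graph of multiplication by some bounded measurable $p\times m$-valued function $S$ on $i{\mathbb R}$; the inclusion $\cG_S\subset \cM_{\mathfrak D}$ combined with $H^2_m(\Pi_+)\subset \cM_{{\mathfrak D},-}$ yields $S\cdot H^2_m(\Pi_+)\subset H^2_p(\Pi_+)$, so $S$ extends analytically to $\Pi_+$, while negativity of $\cG_S$ gives contractivity on $i{\mathbb R}$; together these put $S\in\cS^{p\times m}(\Pi_+)$, as required.

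For the second assertion, the stronger hypothesis that $\cM_{\mathfrak D}^{[\perp\cK]}$ is a Hilbert space makes $\cM_{\mathfrak D}$ a \emph{regular} subspace of $\cK$, giving $\cK=\cM_{\mathfrak D}\,[\dot +]\,\cM_{\mathfrak D}^{[\perp\cK]}$ as a $J$-orthogonal Hilbert-space direct sum.  A Kre\u{\i}n-space Beurling--Lax theorem, applied to the full-range shift-invariant subspace $\cM_{\mathfrak D}$, then produces a rational $J$-inner $\Theta$ with $\cM_{\mathfrak D}=\Theta\cdot H^2_{p+m}(\Pi_+)$, proving (1).  Multiplication by $\Theta$ is a Kre\u{\i}n-space isometry, so it identifies the negative graph subspaces of $\cM_{\mathfrak D}$ with negative graph subspaces of $(H^2_{p+m}(\Pi_+),[\,\cdot\,,\,\cdot\,]_J)$; by Example~\ref{E:unitary} applied one level down, the latter are parametrized by Schur-class multipliers $G\in\cS^{p\times m}(\Pi_+)$ as $\sbm{G\\I_m}\cdot H^2_m(\Pi_+)$.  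Pulling back through $\Theta$ and using the invertibility of $\Theta_{21}G+\Theta_{22}$ (which follows from the $J$-bicontractive properties of $\Theta$ already established around \eqref{ThetaJcontr}--\eqref{Theta22invTheta21}), one solves $\sbm{S\\I_m}\psi^{-1}=\Theta\sbm{Q_1\\Q_2}$ for $S$ to recover exactly the formula \eqref{LFTparam} of (2).
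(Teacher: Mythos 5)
Your proposal follows the paper's strategy quite closely: necessity via the graph subspace $\cG_S$ and Lemma~\ref{L:maxneg}, and for the parametrization under the Hilbert-space hypothesis, a Kre\u{\i}n-space Beurling--Lax representation of $\cM_{\mathfrak D}$ followed by the observation that $M_\Theta$ is a $J$-isometry carrying shift-invariant maximal negative subspaces of $H^2_{p+m}(\Pi_+)$ (parametrized by Schur functions $G$ via $\sbm{G\\I_m}H^2_m(\Pi_+)$) to those of $\cM_{\mathfrak D}$. That part is sound and essentially coincides with the paper's argument, including the reading off of $T=M_S$ from commutation with the shift and the final algebra $\sbm{S\\I_m}\psi^{-1}=\Theta\sbm{Q_1\\Q_2}$ leading to \eqref{LFTparam}.

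The one place where you depart from the paper is in the sufficiency direction of the existence criterion when $\cM_{\mathfrak D}^{[\perp\cK]}$ is merely positive rather than strictly positive, and there is a genuine gap there. You propose: produce \emph{any} maximal negative $\cG\subset\cM_{\mathfrak D}$ by Zorn's lemma, upgrade it by Lemma~\ref{L:maxneg}, then ``arrange'' it to be shift-invariant by averaging over the shift group. This does not work. A Zorn-generic maximal negative subspace of $\cM_{\mathfrak D}$ has no reason to be shift-invariant, and averaging over the cyclic group generated by $M_\chi$ is unavailable: the group is infinite (so there is no invariant mean producing a closed subspace from a family of subspaces), and in any case an ``average'' of negative subspaces is not a negative subspace. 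More fundamentally, the existence of a \emph{shift-invariant} maximal negative subspace inside a shift-invariant $\cM_{\mathfrak D}$ is exactly the nontrivial content; it is not automatic from Lemma~\ref{L:maxneg}, which says nothing about shift-invariance. The paper sidesteps this by treating the nondegenerate (strictly positive) case first --- where the Beurling--Lax isomorphism $M_\Theta$ \emph{constructs} a shift-invariant maximal negative subspace directly --- and then gestures at an approximation/normal-families reduction of the positive-semidefinite case to the positive-definite one. If you want a self-contained argument for the degenerate case you should adopt that reduction (perturb $\bGamma_{\mathfrak D}$ to $\bGamma_{\mathfrak D}+\epsilon I\succ 0$, extract solutions $S_\epsilon$, and pass to a normal-families limit), rather than trying to symmetrize a Zorn subspace.
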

 
 \begin{proof}[Sketch of the proof of Theorem \ref{T:BTOA-NP'}]
We first argue the $\cM_{\mathfrak D}^{[\perp \cK]}$ being a positive 
subspace of $\cK$ is necessary for the BTOA-NP to have a solution.
Let $S\in \cS^{p \times m}(\Pi_{+})$ be such a solution and let 
$M_{S} \colon \psi^{-1} H^{2}_{m}(\Pi_{+}) \to L^{2}_{p}(i {\mathbb R})$ be the 
operator of multiplication by $S$:
$$
  M_{S} \colon \psi^{-1} h \mapsto  S \cdot \psi^{-1} h.
$$
The operator norm of $M_{S}$ is the same as the supremum norm of $S$
over $i {\mathbb R}$:
$$
 \| M_{S}\|_{\rm op} = \|S\|_\infty:=\sup_{\lam \in i {\mathbb R}} \| S(\lam) \|.
$$
Let us consider the graph space of $M_{S}$, namely
\begin{equation}
  \cG_{S} = \begin{bmatrix} M_{S} \\ I_m \end{bmatrix} \psi^{-1}  H^{2}_{m}(\Pi_{+}) 
   = \begin{bmatrix} S \\ I_{m} \end{bmatrix} \cdot \psi^{-1} 
   H^{2}_{m}(\Pi_{+}).
\label{ad2}
\end{equation}
By the first bullet in Example \ref{E:unitary}, it follows that 
\begin{itemize} 
    \item {\em $\| S\|_{\infty} \le 1$ if and only if $\cG_{S}$ 
    is a maximal negative subspace of $\cK$.}
\end{itemize}
Moreover, as a consequence of the criterion \eqref{sol-rep} for $S$ 
to satisfy the interpolation conditions, we have
\begin{itemize}
    \item {\em $S$ satisfies the interpolation conditions if and only 
    if $\cG_{S} \subset \cM_{\mathfrak D}$.}
    \end{itemize}
By combining these two observations, we see that if $S$ is a 
solution to the BTOA-NP, then the 
subspace $\cG_{S}$ is contained in $\cM_{\mathfrak D}$ and is maximal 
negative in $\cK$.  It follows that $\cM_{\mathfrak D}^{[\perp \cK]}$ is 
a positive subspace in $\cK$  as a consequence of Lemma \ref{L:maxneg}.
This verifies the necessity part in Theorem \ref{T:BTOA-NP'}.

\smallskip

Conversely, suppose that ${\mathfrak D}$ is a $\Pi_{+}$-admissible 
BTOA-interpolation data set.  Then we can form the space 
$$
\cM_{\mathfrak D}^{[\perp \cK]} \subset \cK = \sbm{ L^{2}_{p}(i{\mathbb 
R}) \\  \psi^{-1} H^{2}_{m}(\Pi_{+}) }.
$$
Suppose that $\cM_{\mathfrak D}^{[\perp \cK]}$ is a positive subspace of $\cK$.  
By Lemma \ref{L:maxneg}, a subspace $\cG$ of $\cM_{\mathfrak 
D}$ which is maximal negative as a subspace of $\cM_{\mathfrak D}$ is 
also maximal negative as a subspace of $\cK$.  We also saw in the necessity 
argument that if the subspace $\cG$ has the form $\cG_{S}$ \eqref{ad2} 
for a matrix function $S$ 
and $\cG_{S}\subset \cM_{\mathfrak D}$, then $S$ satisfies 
the interpolation conditions \eqref{BTOAint1}, \eqref{BTOAint2}, 
\eqref{BTOAint3}.  However, not all maximal negative subspaces $\cG = 
\sbm{ T \\ I} \psi^{-1} H^{2}_{m}(\Pi_{+})$ 
of $\cK$ have the form $\cG = \cG_{S}$ for a matrix function $S$; the 
missing property is {\em shift-invariance}, i.e.,  one must require 
in addition that $\cG$ is invariant under multiplication by the 
coordinate function $\chi(\lam) = \frac{\lam - 1}{\lam +1}$.  Then 
one gets that $T$ and $M_{\chi}$ commute and one can conclude that $T$ 
is a multiplication operator:  $T = M_{S}$ for some multiplier 
function $S$.  Thus the issue is to construct maximal negative 
subspaces of $\cM_{\mathfrak D}$ (which are then also maximal 
negative as subspaces of $\cK$ by Lemma \ref{L:maxneg}) which are 
also shift-invariant.

To achieve this goal, it is convenient to assume that $\cM_{\mathfrak 
D}^{[\perp \cK]}$ is strictly positive, i.e., that $\cM_{\mathfrak 
D}^{[\perp \cK]}$ is a Hilbert space.  It then follows in particular that 
$\cM_{\mathfrak D}^{[\perp \cK]}$ is regular, i.e., $\cM_{\mathfrak D}^{[\perp \cK]}$ 
and its $J$-orthogonal complement (relative to $\cK$) $\cM_{\mathfrak D}$ form
a $J$-orthogonal decomposition of $\cK$:
$$
  \cK = \cM_{\mathfrak D}  [+]_{J} \cM_{\mathfrak D}^{[\perp \cK]}.
$$
One can argue that one can use an approximation/normal-families 
argument to reduce the general case to this special case, but we do 
not go into details on this point here.  Then results from \cite{BH} 
imply that there is a $J$-Beurling-Lax representer for 
$\cM_{\mathfrak D}$, i.e., there is a $J$-phase function 
$$
\Theta \in L^{2}_{(p+m) \times (p+m)}(i {\mathbb R})\quad\mbox{with}\quad 
\Theta(\lam)^{*} J \Theta(\lam) = J \; \text{ for a.e. } \; \lam \in \Pi_{+}
$$ 
such that \eqref{cMBLrep} holds.  As both 
$$
\cM_{\mathfrak D} \ominus (\cM_{\mathfrak D} \cap 
H^{2}_{p+m}(\Pi_{+}))\quad\mbox{and}\quad H^{2}_{p+m}(\Pi_{+})\ominus(
H^{2}_{p+m}(\Pi_{+}) \cap \cM_{\mathfrak D})
$$ 
are finite-dimensional, in 
fact one can show that $\Theta$ is rational and bounded on $i 
{\mathbb R}$.  Then the multiplication operator $M_{\Theta} \colon k 
\mapsto \Theta \cdot k$ is a Kre\u{\i}n-space isomorphism from 
$H^{2}_{p+m}(\Pi_{+})$ (a Kre\u{\i}n space with inner product induced 
by $J = \sbm{ I_{p} & 0 \\ 0 & -I_{m}}$) onto $\cM_{\mathfrak D}$ 
which also intertwines the multiplication operator $M_{\chi}$ on the 
respective spaces.  It follows that shift-invariant $\cM_{\mathfrak 
D}$-maximal-negative subspaces $\cG$ are exactly those of the form
$$
\cG = \Theta \cdot \begin{bmatrix} G \\ I_{m} \end{bmatrix} \cdot 
H^{2}_{m}(\Pi_{+}), \;  \text{ where } G \in \;  \cS^{p \times m}(\Pi_{+}).
$$
By the preceding analysis, any such subspace $\cG$ also has the form
$$
  \cG = \begin{bmatrix} S  \\ I_{m} \end{bmatrix} \cdot \psi^{-1} 
  H^{2}_{m}(\Pi_{+})
$$
where $S \in \cS^{p \times m}(\Pi_{+})$ is a Schur-class solution of 
the interpolation conditions \eqref{BTOAint1}, \eqref{BTOAint2}, 
\eqref{BTOAint3}.  Moreover one can reverse this analysis to see that 
any solution $S$ of the BTOA-NP interpolation problem arises in this 
way from a $G \in \cS^{p \times m}(\Pi_{+})$.  From the subspace 
equality
$$
\begin{bmatrix} S  \\ I_{m} \end{bmatrix} \cdot \psi^{-1} 
  H^{2}_{m}(\Pi_{+}) = \begin{bmatrix} \Theta_{11} & \Theta_{12} \\ 
  \Theta_{21} & \Theta_{22} \end{bmatrix} \cdot \begin{bmatrix} G \\ I_{m} 
\end{bmatrix} \cdot 
H^{2}_{m}(\Pi_{+})
$$
one can solve for $S$ in terms of $G$: in particular we have
$$
\begin{bmatrix} S  \\ I_{m} \end{bmatrix} \cdot \psi^{-1} I_{m} 
    \in \begin{bmatrix} \Theta_{11} & \Theta_{12} \\ 
  \Theta_{21} & \Theta_{22} \end{bmatrix} \cdot \begin{bmatrix} G \\ I_{m} 
\end{bmatrix} \cdot  H^{2}_{m}(\Pi_{+}),
$$
so there must be a function $Q \in H^{\infty}_{m \times m}(\Pi_{+})$ 
so that
$$ 
\begin{bmatrix} S  \\ I_{m} \end{bmatrix} \cdot \psi^{-1} I_{m} 
   = \begin{bmatrix} \Theta_{11} & \Theta_{12} \\ 
  \Theta_{21} & \Theta_{22} \end{bmatrix} \cdot \begin{bmatrix} G \\ I_{m} \end{bmatrix}
  \cdot Q.
$$
As we saw in Section 2, the latter equality (which is the same as \eqref{ad3}) implies the 
representation formula \eqref{LFTparam} for the set of solutions $S$.
This completes the proof of Theorem \ref{T:BTOA-NP'}.
\end{proof}

\begin{remark}  \label{R:no-wno}
Note that in this Grassmannian/Kre\u{\i}n-space approach
we have not even mentioned that the $J$-phase $\Theta$ is actually $J$-inner 
(i.e., $\Theta(\lam)$ is $J$ contractive at its points of analyticity 
in $\Pi_{+}$);  this condition and the winding number argument in the 
proof via the state-space approach in Section \ref{S:proof1} have been 
replaced by the condition that $\cM_{\mathfrak D}^{[\perp \cK]}$ is a 
positive subspace and consequences of this assumption coming out of 
Lemma \ref{L:maxneg}.
\end{remark}

\section{State-space versus Grassmannian/Kre\u{\i}n-space-geometry 
solution criteria}  \label{S:synthesis}

Assume that we are given a $\Pi_{+}$-admissible interpolation data 
set ${\mathfrak D}$ with $\bGamma_{\mathfrak D}$ invertible.
When we combine the results of Theorems \ref{T:BTOA-NP}, 
\ref{T:BTOA-NP'} and \ref{T:RKHS}, we see immediately that $\bGamma_{\mathfrak D} 
\succ 0$ if and only if the subspace $\cM_{\mathfrak D}^{[\perp]}$ 
is positive as a subspace of the Kre\u{\i}n-space $\cK$ 
\eqref{defcK}, since each of these two conditions is equivalent to 
the existence of solutions for the BTOA-NP interpolation problem with 
data set ${\mathfrak D}$.  It is not too much of a stretch to 
speculate that the strict positive definiteness of $\bGamma_{\mathfrak D}$ 
is equivalent to strict positivity of $\cM_{\mathfrak D}^{[\perp \cK]}$.
Furthermore, in the case where $\bGamma_{\mathfrak D}$ is invertible, 
by the analysis in Section \ref{S:RKHS} we know that 
positive-definiteness of $\bGamma_{\mathfrak D}$ is equivalent to 
positivity of the kernel $K_{\Theta,J}$ \eqref{ad1}, or to the 
reproducing kernel space $\cH(K_{\Theta,J})$ being a Hilbert space.
The goal of this section is to carry out some additional geometric 
analysis to verify these equivalences for the nondegenerate case 
($\bGamma_{\mathfrak D}$ invertible)  directly.

\begin{corollary}  \label{C:BTOA-NP}  Suppose that ${\mathfrak D}$ is 
    a $\Pi_{+}$-admissible BTOA interpolation data set, let 
    $\bGamma_{\mathfrak D}$ be the matrix given in \eqref{GammaData} and let 
    $\cM_{\mathfrak D}^{[\perp \cK]} \subset \cK$ be the subspace defined in
    \eqref{cPdata}.  Then the following are equivalent:
  \begin{enumerate}
 \item $\bGamma_{\mathfrak D} \succ 0$.
\item $\cM_{\mathfrak D}^{[\perp \cK]}$ is a strictly positive subspace 
      of $\cK$ (i.e., $\cM^{[ \perp \cK]}$ is a Hilbert space in the 
      $J$-inner product).
\item The reproducing kernel Pontryagin space $\cH(K_{\Theta,J})$ is 
actually a Hilbert space.
\end{enumerate}
\end{corollary}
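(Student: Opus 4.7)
The plan is to establish the cycle (1) $\Leftrightarrow$ (3) $\Leftrightarrow$ (2). The first equivalence is precisely Theorem \ref{T:RKHS}, so no new argument is needed there. The easy half of (1) $\Leftrightarrow$ (2) is the direction (2) $\Rightarrow$ (1): strict positivity of $\cM_{\mathfrak D}^{[\perp \cK]}$ forces it to be positive, so by Theorem \ref{T:BTOA-NP'} the BTOA-NP problem has a Schur-class solution, and Theorem \ref{T:BTOA-NP} then gives $\bGamma_{\mathfrak D} \succeq 0$; combined with the standing invertibility hypothesis this upgrades to $\bGamma_{\mathfrak D} \succ 0$.

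\smallskip

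The substantive work is (1) $\Rightarrow$ (2). Guided by the Kolmogorov factorization \eqref{ad1} of $K_{\Theta,J}$, I would introduce the explicit linear map $\Phi \colon \mathbb C^{n_Z + n_W} \to \cK$ defined by
\begin{equation*}
\Phi_c(\lam) := \bC(\lam I - \bA)^{-1}\bGamma_{\mathfrak D}^{-1}c, \qquad \lam \in i{\mathbb R}.
\end{equation*}
Because $\sigma(\bA) = (-\sigma(Z^{*})) \cup \sigma(W)$ is disjoint from $i{\mathbb R}$, the function $\Phi_c$ lies in $L^{2}_{p+m}(i{\mathbb R})$; a partial-fraction split produces a $\Pi_{-}$-analytic piece from the $-Z^{*}$-block and a piece with poles in $\Pi_{+}$ from the $W$-block, the latter fitting into the $\psi^{-1} H^{2}_{m}(\Pi_{+})$ component of $\cK$ via the explicit form \eqref{psi-inv}. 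The claim to prove is that $\Phi_c \in \cM_{\mathfrak D}^{[\perp \cK]}$ with $\cK$-Gram matrix $[\Phi_{c},\Phi_{c'}]_{J} = \langle \bGamma_{\mathfrak D}^{-1}c, c'\rangle$. Both verifications should reduce to residue calculations in the spirit of \eqref{aug1}--\eqref{aug2} and \eqref{identity'}, invoking the Sylvester equation \eqref{Sylvester'} and the Lyapunov equations \eqref{GammaL}, \eqref{GammaR} as repackaged in \eqref{bigLySyl}; for the Gram computation, Plancherel together with the identity \eqref{ad1} should yield the claimed value directly.

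\smallskip

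The main obstacle will be proving that $\Phi$ surjects onto $\cM_{\mathfrak D}^{[\perp \cK]}$ (injectivity being immediate from invertibility of $\bGamma_{\mathfrak D}$). Here I would use the Beurling--Lax representation $\cM_{\mathfrak D} = \Theta \cdot H^{2}_{p+m}(\Pi_{+})$ from \eqref{cMBLrep} together with the companion identity \eqref{Theta22-3} to perform an independent dimension count: the $\cK$-orthogonal complement of $\cM_{\mathfrak D}$ should have dimension equal to the McMillan degree of $\Theta$, which by the state-space formula \eqref{oct1} is exactly $n_{Z} + n_{W}$. Matching dimensions then forces the injection $\Phi$ to be a bijection, and the Gram identity transfers the signature of $\bGamma_{\mathfrak D}^{-1}$ (equivalently $\bGamma_{\mathfrak D}$) onto $\cM_{\mathfrak D}^{[\perp \cK]}$, giving the desired equivalence. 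As a sanity check, under the identification above $\Phi$ also realizes an isometric isomorphism $\cH(K_{\Theta,J}) \cong \cM_{\mathfrak D}^{[\perp \cK]}$, which confirms directly the already-known implication (3) $\Leftrightarrow$ (2).
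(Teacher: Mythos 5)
Your reduction of (1)\ $\Leftrightarrow$\ (3) to Theorem~\ref{T:RKHS} and of (2)\ $\Rightarrow$\ (1) to the solvability criteria of Theorems~\ref{T:BTOA-NP} and~\ref{T:BTOA-NP'} is legitimate but circles back through solvability, whereas the stated purpose of the corollary is to exhibit the equivalences \emph{directly}; that is a matter of taste. The genuine problem lies in the (1)\ $\Rightarrow$\ (2) argument, which conflates $\cH(K_{\Theta,J})$ with $\cM_{\mathfrak D}^{[\perp \cK]}$, and this identification fails in the general bitangential case. The vectors $\Phi_{c}(\lam)=\bC(\lam I-\bA)^{-1}\bGamma_{\mathfrak D}^{-1}c$ are precisely the kernel functions $K_{\Theta,J}(\cdot,\zeta)v$, so they span the reproducing kernel space $\cH(K_{\Theta,J})$, not $\cM_{\mathfrak D}^{[\perp\cK]}$. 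Writing $\sbm{a\\ b}=\bGamma_{\mathfrak D}^{-1}c$, a computation of the $J$-pairing of $\Phi_{c}$ against a generic $m=\sbm{V\\ U}(\lam I-W)^{-1}x+\sbm{f_1\\ f_2}\in\cM_{\mathfrak D}$ gives $[\Phi_c,m]_J=-\langle \Gamma^{*}a+\Gamma_R b,\,x\rangle=-\langle c_2,\,x\rangle$, so $\Phi_c$ lies in $\cM_{\mathfrak D}^{[\perp\cK]}$ only when the second block $c_2$ of $c$ vanishes. (In the purely left-tangential Case~1 there is no $W$-block and the two spaces coincide, which is likely the source of the confusion.)

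The dimension count is also not tenable: $\cM_{\mathfrak D}^{[\perp\cK]}$ is infinite-dimensional, since it contains $\sbm{\operatorname{Ker}(\cO_{V,W})^{*}\\ 0}$ and $(\cO_{V,W})^{*}$ is a finite-rank operator on $H^{2}_{p}(\Pi_{-})$ (see Lemma~\ref{L:MperpK}). The McMillan degree of $\Theta$ is indeed $n_Z+n_W$, but that computes $\dim\cH(K_{\Theta,J})$, not $\dim\cM_{\mathfrak D}^{[\perp\cK]}$. What is true, and what the paper proves, is that after splitting off the evidently-positive infinite-dimensional piece, the remaining finite-dimensional part $(\cM_{\mathfrak D}^{[\perp\cK]})_0={\mathbb G}_{T^{[*]}}\,[+]\,(\cM_{\mathfrak D}^{[\perp]})_1$ is a \emph{different} Kre\u{\i}n space from $\cH(K_{\Theta,J})\cong(\operatorname{Ran}J(\cC_{Z,\sbm{X & -Y}})^{*})_J\,[+]\,(\widehat{\mathbb G}_T)_{-J}$ (note the $-J$ on the second summand in display~\eqref{HKThetaJ}). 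The two have the same negative signature $\nu_{-}(\bGamma_{\mathfrak D})$ because both are read off from the factorization of $\bGamma_{\mathfrak D}$ in Lemma~\ref{L:GammaD-factored}, but via the two different Schur complements (conditions (2) and (3) of Lemma~\ref{L:GammaDpos}). Your map $\Phi$ lands in the first space but not the second, so as written the argument does not close the (1)\ $\Rightarrow$\ (2) or (2)\ $\Leftrightarrow$\ (3) gaps in the bitangential case; it would need to be replaced by the decomposition-and-Schur-complement analysis (or an equivalent) that tracks the two spaces separately.
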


\smallskip

\begin{proof} For simplicity we consider first the case where the data set 
    $\mathfrak D$ has the form
\begin{equation}  \label{dataL}
{\mathfrak D}_{L} = (Z, X, Y; \emptyset, \emptyset,  \emptyset;\emptyset),
\end{equation}
i.e., there are only Left Tangential interpolation conditions 
\eqref{BTOAint1}. 

\smallskip

\noindent
\textbf{Case 1: The LTOA setting.}
In case $\mathfrak D$ has the form ${\mathfrak D} = {\mathfrak 
D}_{L}$ as in \eqref{dataL}, the matrix $\bGamma_{\mathfrak D}$ collapses down to 
$\bGamma_{{\mathfrak D}_{L}} = \Gamma_{L}$ and $\cM_{{\mathfrak D}_{L}}$ collapses down to
$$
    \cM_{{\mathfrak D}_{L}} = \left\{ \begin{bmatrix} f \\ g 
\end{bmatrix} 
 \in H^{2}_{p+m}(\Pi_{+}) \colon
\left( \begin{bmatrix}X & -Y \end{bmatrix} \begin{bmatrix} f \\ g 
\end{bmatrix} \right)^{\wedge L}(Z) = 0 \right\}.
$$
Furthermore, in the present case, $\cM_{{\mathfrak D}_{L},-}=H^2_m(\Pi_+)$ and 
therefore,  $\cK$ given by \eqref{defcK} is simply
 $\cK = \sbm{L^{2}_{p}(i {\mathbb R}) \\ H^{2}_{m}(\Pi_{+})}$.

\smallskip

 We view the map $\sbm{ f \\ g} \mapsto \left(\sbm{X & -Y } \sbm{ f 
 \\ g} \right)^{\wedge L}(Z)$ as an operator
 $$
  \cC_{Z, \sbm{ X & -Y}} \colon H^{2}_{p+m}(\Pi_{+}) \to {\mathbb 
  C}^{n_{Z}} 
 $$
 which can be written out more explicitly as an integral operator 
 along the imaginary line:\footnote{We view operators of this form as 
 {\em control-like} operators; they and their cousins ({\em observer-like} operators)
will be discussed in a broader context as part of the analysis of Case 2 to 
come below.}
$$ 
  \cC_{Z, \, \sbm{ X & -Y }} \colon
  \begin{bmatrix} f_{+} \\ f_{-} \end{bmatrix} \mapsto
  \frac{1}{2 \pi} \int_{-\infty}^{\infty}
- (iyI - Z)^{-1} \begin{bmatrix} X & -Y \end{bmatrix} \begin{bmatrix} 
f_{+}(iy) \\ f_{-}(iy) \end{bmatrix} \, dy.
$$
Then we can view $\cM_{{\mathfrak D}_{L}}$ as an operator kernel:
 $$
  \cM_{{\mathfrak D}_{L}} = \operatorname{Ker} \,  \cC_{Z, \sbm{ X & -Y}}.
  $$
 We are actually interested in the $J$-orthogonal complement 
\begin{align*}
 \cM_{{\mathfrak D}_{L}}^{[\perp \cK]}:=\cK [-]_J \cM_{{\mathfrak D}_{L}}
&=\sbm{L^{2}_{p}(i {\mathbb R}) \\ H^{2}_{m}(\Pi_{+})} [-]_J \cM_{{\mathfrak D}_{L}}\\
&=\sbm{ H^{2}_{p}(\Pi_{-}) \\ 0 } \oplus\left( H^{2}_{p+m}(\Pi_+)
 [-]_{J} \cM_{{\mathfrak D}_{L}}\right).
\end{align*}
 As the subspace $\sbm{ H^{2}_{p}(\Pi_{-}) \\ 0 }$ is clearly 
 positive, we see that   $\cM_{{\mathfrak D}_{L}}^{[\perp \cK]}$ is 
 positive if and only if its subspace  
$$
\cM_{{\mathfrak  D}_{L}}^{[\perp H^{2}_{p+m}(\Pi_{+})]}: = H^{2}_{p+m}(\Pi_{+}) 
 [-]_{J} \cM_{{\mathfrak D}_{L}}
$$
is positive. By standard operator-theory duality, we 
 can express the latter (finite-dimensional and hence closed) subspace as an operator range:
 $$
 \cM_{{\mathfrak D}_{L}}^{[\perp H^{2}_{p+m}(\Pi_{+})]} = \operatorname{Ran} J \left(\cC_{Z,\sbm{ X & -Y}} 
 \right)^{*},
 $$
 where the adjoint is with respect to the standard Hilbert-space 
 inner product on $H^{2}_{p+m}(\Pi_{+})$ and the standard Euclidean 
 inner product on ${\mathbb C}^{n_{Z}}$. 
 One can compute the adjoint $\left(\cC_{Z, \sbm{ X & -Y}} 
 \right)^{*}: \; {\mathbb C}^{n_{Z}}\to \sbm{ H^{2}_{p}(\Pi_+) \\
H^{ 2}_{m}(\Pi_+) }$ explicitly as 
$$
(\cC_{Z, \sbm{ X & -Y}})^{*} \colon x \mapsto 
\begin{bmatrix} -X^{*} \\ Y^{*} \end{bmatrix} (\lam I + Z^{*})^{-1} x.
$$
Then the Kre\u{\i}n-space orthogonal complement $H^{2}_{p+m}(\Pi_{+}) [-]_{J} \cM_{{\mathfrak D}_{L}}$ 
can be identified with
\begin{align}  
\cM_{{\mathfrak D}_{L}}^{[\perp H^{2}_{p+m}(\Pi_{+}) ]} &=
J \cdot {\rm Ran} (\cC_{Z, \sbm{ X & -Y}})^{*} \notag\\
&=\left\{ \begin{bmatrix} -X^{*} \\ -Y^{*} \end{bmatrix} (\lam I + 
Z^{*})^{-1} x \colon x \in {\mathbb C}^{n_Z} \right\}. \label{cMDL}
\end{align}

To characterize when $\cM_{{\mathfrak D}_{L}}^{[\perp 
H^{2}_{p+m}(\Pi_{+})]}$ is a 
positive subspace, it suffices to compute the Kre\u{\i}n-space 
inner-product gramian matrix ${\mathbb G}$ for $\cM_{{\mathfrak 
D}_{L}}^{[\perp H^{2}_{p+m}(\Pi_{+})]}$ with respect to its parametrization by 
${\mathbb C}^{n_{Z}}$ in \eqref{cMDL}:
\begin{align*}
& \langle {\mathbb G} x, x' \rangle_{{\mathbb C}^{n_{Z}}}  \\
& \quad = 
 \frac{1}{2 \pi}\left\langle J \begin{bmatrix} -X^{*} \\ -Y^{*} 
\end{bmatrix} (\lam I + Z^{*})^{-1} x,\, 
\begin{bmatrix} -X^{*} \\ -Y^{*} \end{bmatrix} (\lam I 
+ Z^{*})^{-1} x' \right\rangle_{H^{2}_{p+m}(\Pi_{+})} \\
&  \quad  = \frac{1}{2 \pi}\int_{-\infty}^{\infty}\langle (-iy I + Z)^{-1} (X X^{*} -  
Y Y^{*}) (iyI + Z^{*})^{-1} x,\, x' \rangle_{{\mathbb C}^{n_{Z}}}\, dy. 
\end{align*}
Thus ${\mathbb G}$ is given by
$$   
{\mathbb G} = \frac{1}{2 \pi} \int_{-\infty}^{\infty} (-iyI + Z)^{-1} (X X^{*} - Y 
Y^{*}) (iyI + Z^{*})^{-1}\, dy.
$$
Introduce the change of variable $\zeta = i y$, $d\zeta = i \, dy$ to 
write this as a complex line 
integral
\begin{align*}
 {\mathbb G} & = \frac{1}{2 \pi i} \lim_{R \to \infty} \int_{\Gamma_{R,1}} 
 (-\zeta I + Z)^{-1} (X X^{*} - 
  Y Y^{*}) (\zeta I + Z^{*})^{-1}\, d \zeta \\
  & =  \frac{1}{2 \pi i}  \lim_{R \to \infty} 
  \int_{-\Gamma_{R,1}} (\zeta I - Z)^{-1} (X  X^{*} - YY^{*}) (\zeta I 
  + Z^{*})^{-1}\, d\zeta
\end{align*}
where $\Gamma_{R,1}$ is the straight line from $-iR$ to $iR$ and 
$-\Gamma_{R,1}$ is the same path but with reverse orientation (the 
straight line from $iR$ to $-iR$).  Since the integrand
\begin{equation}   \label{fzeta}
 f(\zeta) =
(\zeta I - Z)^{-1} (X  X^{*} - YY^{*}) (\zeta I  + Z^{*})^{-1}
\end{equation}
satisfies an estimate of the form
$ \| f(\zeta \| \le \frac{M}{ |\zeta|^{2}}\; $ as $\; |\zeta| \to \infty$, 
it follows that
$$
\lim_{R \to \infty} \int_{\Gamma_{R,2}}  (\zeta I - Z)^{-1} (X  X^{*} - YY^{*}) (\zeta I 
  + Z^{*})^{-1}\, d\zeta = 0
$$
where $\Gamma_{R,2}$ is the semicircle of radius $R$ with counterclockwise 
orientation starting at the point $-iR$ and ending at the point 
$iR$ (parametrization:  $\zeta = R e^{i \theta}$ with $-\pi/2 \le 
\theta \le \pi/2$).  Hence we see that  
$$
{\mathbb G} = \frac{1}{ 2 \pi i} \lim_{R \to \infty} \int_{\Gamma_{R}} 
(\zeta I - Z)^{-1} (X  X^{*} - YY^{*}) (\zeta I 
  + Z^{*})^{-1}\, d\zeta
$$
where $\Gamma_{R}$ is the simple closed curve $-\Gamma_{R,1} + 
\Gamma_{R,2}$.  By the residue theorem, this last expression is 
independent of $R$ once $R$ is so large that all the RHP poles of the 
integrand $f(\zeta)$ \eqref{fzeta} are inside the curve $\Gamma_{R}$, 
and hence
$$
{\mathbb G} =  \frac{1}{2 \pi i} \int_{\Gamma_{R}}
(\zeta I - Z)^{-1} (X X^{*} - Y Y^{*} ) (\zeta I + Z^{*})^{-1}\, 
d\zeta 
$$
for any $R$ large enough. This enables us to compute ${\mathbb G}$ via residues:
\begin{equation}   \label{bbG}
    {\mathbb G} = \sum_{z_{0} \in\Pi_+} {\rm Res}_{\zeta = z_{0}}
(\zeta I - Z)^{-1} (X X^{*} - Y Y^{*} ) (\zeta I +  Z^{*})^{-1}.
\end{equation}

We wish to verify that ${\mathbb G}$ satisfies the Lyapunov equation
\begin{equation}  \label{LyapunovG}
    {\mathbb G} Z^{*} + Z {\mathbb G} = X X^{*} - Y Y^{*}.
\end{equation}
Toward this end let us first note that
\begin{align*}
&(\zeta I - Z)^{-1}A(\zeta I + Z^{*})^{-1}Z^*+Z(\zeta I - Z)^{-1}A
(\zeta I + Z^{*})^{-1}\\
&=(\zeta I - Z)^{-1}A-A(\zeta I + Z^{*})^{-1}
\end{align*}
for any $A\in\mathbb C^{n_Z\times n_Z}$. Making use of the latter equality with 
$A=X X^{*} - YY^{*}$ 
we now deduce from the formula \eqref{bbG} for ${\mathbb G}$ that
\begin{align*}
    {\mathbb G} Z^{*} + Z {\mathbb G} & = 
  \sum_{z_{0} \in \Pi_+} {\rm Res}_{\zeta = z_{0}} 
 \left( (\zeta I - Z)^{-1} (XX^{*} - Y Y^{*})  \right. \\
 & \quad \quad \quad \quad  \quad \quad \left. - (XX^{*} - Y Y^{*}) 
 (\zeta I + Z^{*})^{-1} \right) \\
 & =  I \cdot (XX^{*} - Y Y^{*}) - (XX^{*} - Y Y^{*})  \cdot 0
  = XX^{*} - YY^{*}
\end{align*}
where for the last step we use that $Z$ has all its spectrum in the 
right half plane while $-Z^{*}$ has all its spectrum in the left
half plane; also note that in general the sum of the residues of any 
resolvent matrix $R(\zeta) = ( \zeta I - A)^{-1}$ is the identity 
matrix, due to the Laurent expansion at infinity for $R(\zeta)$:  
$R(\zeta) = \sum_{n=0}^{\infty} A^{n} \zeta^{-n-1}$.
This completes the verification of \eqref{LyapunovG}.

\smallskip

Since both $\Gamma_{L}$ and ${\mathbb G}$ satisfy the same Lyapunov 
equation \eqref{GammaL} which has a 
unique solution since $\sigma(Z) \cap \sigma(-Z^{*}) = \emptyset$, we 
conclude that ${\mathbb G} = \Gamma_{L}$.   This completes the direct proof 
of the equivalence of conditions (1) and (2) in Corollary \ref{C:BTOA-NP}  for the case 
that $\mathfrak D = {\mathfrak D}_{L}$.

To make the connection with the kernel $K_{\Theta,J}$, we note that there 
is a standard way to identify a reproducing kernel Hilbert space $\cH(K)$ of a 
particular form with an operator range (see e.g.\ \cite{Sarason} 
or \cite{BB-HOT}).  Specifically, let $M_{\Theta}$ be the 
multiplication operator
$$
 M_{\Theta} \colon f(\lam) \mapsto \Theta(\lam) f(\lam)
$$
acting on $H^{2}_{p+m}(\Pi_{+})$, identify $J$ with $J \otimes 
I_{H^{2}}(\Pi_{+})$ acting on $H^{2}_{p+m}(\Pi_{+})$, and define $W 
\in \cL(H^{2}_{p+m}(\Pi_{+}))$ by
$$
  W = J - M_{\Theta} J (M_{\Theta})^{*}.
$$
For $w \in \Pi_{+}$ and $\by \in {\mathbb C}^{p+m}$, let 
$k_{w,\by}(z) = \frac{1}{z - \overline{w}}\by$ by the kernel element 
associate with the Szeg\H{o} kernel $k_{\rm Sz} \otimes I_{{\mathbb 
C}^{p+m}}$. One can verify
$$
 W k_{w,\by} = K_{\Theta,J}( \cdot, w) \by \in \cH(K_{\Theta, J}),
$$
and furthermore,
\begin{align*}
    \langle W k_{w_{j},\by_{j}}, \, W k_{w_{i},\by_{i}} 
    \rangle_{\cH(K_{\Theta,J})} & =
    \langle K_{\Theta,J}((w_{i},w_{j}) \by_{j},\, \by_{i} 
    \rangle_{{\mathbb C}^{p+m}} \\
    & = \langle W k_{w_{j}, \by_{j}}, \, k_{w_{i}, \by_{i}} 
    \rangle_{H^{2}_{p+m}(\Pi_{+})}.
\end{align*}
As $\Theta$ is rational and $M_{\Theta}$ is a $J$-isometry, one can see that 
$\operatorname{Ran} W$ is already closed. Hence we have 
the concrete identification $\cH(K_{\Theta,J}) = \operatorname{Ran} 
W$ with lifted inner product
$$
\langle W f, W g\rangle_{\cH(K_{\Theta,J})} = \langle W f, g 
\rangle_{H^{2}_{p+m}(\Pi_{+})}.
$$
As $M_{\Theta}$ is a $J$-isometry, the operator 
$M_{\Theta} J (M_{\Theta})^{*} =: M_{\Theta}(M_{\Theta})^{[*]}$ is 
the $J$-selfadjoint projection onto $\Theta \cdot 
H^{2}_{p+m}(\Pi_{+})$ and $WJ = I - M_{\Theta} (M_{\Theta})^{[*]}$ is 
the $J$-self-adjoint projection onto $H^{2}_{p+m} [-] \Theta \cdot 
H^{2}_{p+m}(\Pi_{+}) = \cM_{\mathfrak D}^{[\perp \cK]}$.  We then see 
that, for all $f,g \in H^{2}_{p+m}(\Pi_{+})$,
$$\langle W J f, W J g \rangle_{\cH(K_{\Theta,J})} =
 \langle  W J f, J g \rangle_{H^{2}_{p+m}(\Pi_{+})} =
\langle J \cdot WJ f, WJg \rangle_{H^{2}_{p+m}(\Pi_{+})},
$$
i.e., the identity map is a Kre\u{\i}n-space isomorphism between 
$\cH(K_{\Theta,J})$ and $\cM_{\mathfrak D}^{[ \perp \cK]}$ with the 
$J$-inner product.
In particular, we arrive at the equivalence of conditions (2) and (3) 
in Corollary \ref{C:BTOA-NP} for Case 1.

\smallskip

\noindent
\textbf{Case 2: The general BTOA setting:}  
To streamline formulas to come, we introduce two types of 
control-like operators and two types of observer-like operators
as follows (for fuller details and systems-theory motivation, we 
refer to \cite{BR} for the discrete-time setting and \cite{Amaya} for 
the continuous-time setting). Suppose that $(A,B)$ is an input pair of matrices 
(so $A$ has, say, 
size $N \times N$ and $B$ has size $N \times n$).  We assume that either $A$ is 
stable ($\sigma(A) 
\subset \Pi_{-}$) or $A$ is antistable ($\sigma(A) \subset \Pi_{+}$).  
In case $\sigma(A) \subset \Pi_{+}$, we define a 
control-like operator as appeared in the Case 1 analysis
$$
\cC_{A,B} \colon H^{2}_{n}(\Pi_{+}) \to {\mathbb C}^{N}
$$
by
\begin{align*}
  \cC_{A,B} \colon g & \mapsto (Bg)^{\wedge L}(A):  = \sum_{z \in \Pi_{+}} {\rm 
  Res}_{\lambda = z} (\lambda I - A)^{-1} B g(\lambda) \\
  & = - \frac{1}{2 \pi} \int_{-\infty}^{\infty} (iy I - A)^{-1} B 
  g(iy)\, dy.
\end{align*}
In case $\sigma(A) \subset \Pi_{-}$, we define a complementary 
control-like operator 
$$
\cC^{\times}_{A,B} \colon H^{2}_{n}(\Pi_{-}) \to {\mathbb C}^{N}
$$
by 
\begin{align*} 
 \cC^{\times}_{A,B} \colon g & \mapsto (Bg)^{\wedge L}(A):  = \sum_{z 
 \in \Pi_{-}} {\rm Res}_{\lambda = z} (\lambda I - A)^{-1} B g(\lambda) \\
  & =  \frac{1}{2 \pi} \int_{-\infty}^{\infty} (iy I - A)^{-1} B 
  g(iy)\, dy.
\end{align*}
Suppose next that $(C,A)$ is an output-pair, say of respective sizes 
$n \times N$ and $N \times N$, and that $A$ is either stable or 
antistable.  In case $A$ is antistable ($\sigma(A) \subset \Pi_{+}$), 
we define the observer-like operator 
$$
  \cO_{C,A} \colon {\mathbb C}^{N} \to H^{2}_{n}(\Pi_{-})
$$
by
$$
\cO_{C,A} \colon x \mapsto C (\lambda I - A)^{-1} x.
$$
In case $A$ is stable (so $\sigma(A) \subset \Pi_{-})$, then the 
complementary observer-like operator  is given by the same formula 
but maps to the complementary $H^{2}$ space:
$$
\cO^{\times}_{C,A} \colon {\mathbb C}^{N} \mapsto  H^{2}_{n}(\Pi_{+})
$$
given again by
$$
  \cO^{\times}_{C,A} \colon x \mapsto C (\lambda I - A)^{-1} x.
$$
We are primarily interested in the case where $A$ is antistable and 
we consider the operators $\cC_{A,B} \colon H^{2}_{n}(\Pi_{+}) \to 
{\mathbb C}^{N}$ and $\cO_{C,A} \colon {\mathbb C}^{N} \mapsto 
H^{2}_{n}(\Pi_{-})$.  However a straightforward exercise is to show 
that the complementary operators come up when computing adjoints:
for $A$ antistable, $-A^{*}$ is stable and we have the formulas
$$
(\cO_{C,A})^{*} = -\cC^{\times}_{-A^{*}, C^{*}} \colon 
H^{2}_{n}(\Pi_{-}) \mapsto {\mathbb C}^{N}, \quad
(\cC_{A,B})^{*} = \cO^{\times}_{B^{*}, -A^{*}} \colon {\mathbb C}^{N} 
\mapsto H^{2}_{n}(\Pi_{+}).
$$

Assume now that 
$\cM_{\mathfrak D} \subset L^{2}_{p+m}(\Pi_{+})$ is defined as in 
\eqref{cMrep} for a $\Pi_{+}$-admissible interpolation data set 
${\mathfrak D} = (U,V,W; Z, X, Y; \Gamma)$. Thus $(U,W)$ and $(V,W)$ are 
output pairs with $\sigma(W) \subset \Pi_{+}$ and $(Z,X)$ and $(Z,Y)$ 
are input pairs with $\sigma(Z) \subset \Pi_{+}$.  We therefore have 
observer-like and control-like operators
\begin{align*}
&\cO_{V,W} \colon {\mathbb C}^{n_{W}} \to H^{2}_{p}(\Pi_{-}), 
\quad \cO_{U,W} \colon {\mathbb C}^{n_{W}} \to H^{2}_{m}(\Pi_{-}), \\
& \cC_{Z,X} \colon H^{2}_{p}(\Pi_{+}) \to {\mathbb C}^{n_{Z}}, \quad\quad
\cC_{Z,Y} \colon H^{2}_{m}(\Pi_{+}) \to {\mathbb C}^{n_{Z}}
\end{align*}
defined as above, as well as the observer-like and control-like operators
$$
 \cO_{\sbm{ V \\ U}, W} : = \begin{bmatrix} \cO_{V,W} \\ \cO_{U,W} 
 \end{bmatrix},\quad 
 \cC_{Z, [ X \; \;  -Y]} = \begin{bmatrix} 
 \cC_{Z,X} & - \cC_{Z,Y} \end{bmatrix}.
 $$
Then the adjoint  operators have the form
 \begin{align*}
&(\cO_{V,W})^{*} = -\cC^{\times}_{-W^{*}, V^{*}} \colon H^{2}_{p}(\Pi_{-}) \to
{\mathbb C}^{n_{W}}, \\ 
& (\cO_{U,W})^{*} = -\cC^{\times}_{-W^{*}, U^{*}} \colon 
H^{2}_{m}(\Pi_{-}) \to {\mathbb C}^{n_{W}}, \\
& (\cC_{Z,X})^{*} = \cO^{\times}_{X^{*}, -Z^{*}} \colon {\mathbb 
C}^{n_{Z}} \to H^{2}_{p}(\Pi_{+}), \\
& (\cC_{Z,Y})^{*} = \cO^{\times}_{Y^{*}, -Z^{*}} \colon {\mathbb C}^{n_{Z}}
\to H^{2}_{m}(\Pi_{+})  
\end{align*}
and are given explicitly by: 
\begin{align*}
  & (\cO_{V,W})^{*} \colon g_{1} \mapsto  
  -\frac{1}{2 \pi} \int_{-\infty}^{\infty} (iy I + W^{*})^{-1} V^{*} 
  g_1(iy)\, dy, \\
  &  (\cO_{U,W})^{*} \colon g_{2} \mapsto  
  -\frac{1}{2 \pi} \int_{-\infty}^{\infty} (iy I + W^{*})^{-1} 
  U^{*}g_2(iy)\, dy, \\
& (\cC_{Z,X})^{*} \colon x \mapsto  X^{*} (\lam I + Z^{*})^{-1} x,
\quad (\cC_{Z,Y})^{*} \colon x \mapsto  Y^{*} (\lam I + Z^{*})^{-1} x.
  \end{align*}
Furthermore one can check via computations as in the derivation of 
\eqref{bbG} above that the $J$-observability and  $J$-controllability gramians
\begin{align*}  & \cG^{J}_{Z,\sbm{X & -Y}} : = 
    \cC_{Z,X} \cC_{Z,X}^{*} - \cC_{Y,Z} \cC_{Z,Y}^{*} =: \cG_{Z,X} - 
    \cG_{Z,Y}, \\
  & \cG^{J}_{\sbm{ V \\ U}, W} : = \cO^{*}_{V,W} \cO_{V,W} - 
  \cO_{U,W}^{*} \cO_{U,W} =: \cG_{V,W} - \cG_{U,W}
\end{align*}
satisfy the respective Lyapunov equations
\begin{align*}
    & \cG^{J}_{Z,\sbm{ X & -Y}} Z^{*} + Z \cG^{J}_{Z,\sbm{ X & -Y}} 
    = X X^{*} - Y Y^{*}, \\
    & \cG^{J}_{\sbm{ V \\ U}, W} W + W^{*} \cG^{J}_{\sbm{V \\ U}, W} = 
    V^{*}V - U^{*} U.
\end{align*}
Hence, by the uniqueness of such solutions and the characterizations 
of $\Gamma_{L}$ and $\Gamma_{R}$ in \eqref{GammaL}, \eqref{GammaR},
we get 
\begin{equation} \label{GammaLRid}
    \cG^{J}_{\sbm{X & -Y}, Z} = \Gamma_{L}, \quad
    \cG^{J}_{\sbm{V \\ U}, W} = -\Gamma_{R}.
\end{equation}
Then the representation \eqref{cMrep} for $\cM_{\mathfrak D}$ can be rewritten more 
succinctly as
\begin{align}  
    \cM_{\mathfrak D} = &\left\{ \cO_{\sbm{V \\ U}, W} x + \sbm{f_{1} 
    \\ f_{2}} \colon x \in {\mathbb C}^{n_{W}} \text{ and } \sbm{ 
    f_{1} \\ f_{2} }  \in H^{2}_{p+m}(\Pi_{+})  \right. \notag \\
& \left. \quad \quad  \text{such that } \cC_{Z,\sbm{ X & -Y}} \sbm{ f_{1} \\ f_{2}} = 
 \Gamma x \right\}.   
\label{cMrep'}
\end{align}
It is readily seen from the latter formula that 
    \begin{align}
P_{H^{2}_{p+m}(\Pi_-)} \cM_{\mathfrak D}&=\operatorname{Ran}\cO_{\sbm{ V \\ U}, W},
\label{ad8} \\
\cM_{\mathfrak D} \cap H^{2}_{p+m}(\Pi_{+}) &=
\operatorname{Ker} \cC_{Z, \sbm{ X & -Y}}, \notag
\end{align}
and therefore, 
$$
\cM_{\mathfrak D} \cap \sbm{ H^{2}_{p}(\Pi_{+}) \\ 0 } =
\sbm{ \operatorname{Ker} \cC_{Z, X} \\ 0 }, \quad
\cM_{\mathfrak D} \cap \sbm{0 \\ H^{2}_{m}(\Pi_{+})} =
\sbm{ 0 \\ \operatorname{Ker} \cC_{Z, Y} }.
$$

\begin{lemma}  \label{L:cMDperp} If $\cM_{\mathfrak D}$ is given by 
    \eqref{cMrep'}, then the $J$-orthogonal complement 
$\cM_{\mathfrak D}^{[\perp]} = L^{2}_{p+m}(i {\mathbb R}) [-]_{J} \cM_{\mathfrak D}$
with respect to the space $L^{2}_{p+m}(i {\mathbb R})$
is given by
\begin{align} 
    \cM_{\mathfrak D}^{[\perp]} = &\left\{ J( \cC_{Z, \sbm{ X & 
    -Y}})^{*} y + \sbm{ g_{1} \\ g_{2}} \colon y \in {\mathbb 
    C}^{n_{Z}} \text{ and } \sbm{ g_{1} \\ g_{2}} \in 
    H^{2}_{p+m}(\Pi_{-}) \right. \notag \\
& \left. \quad \quad \text{such that } \; (\cO_{V,W})^{*} g_{1} - (\cO_{U,W})^{*} g_{2} = 
-\Gamma^{*} y \right\}.
\label{cMperprep'}
\end{align}
    \end{lemma}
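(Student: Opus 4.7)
The plan is to establish both set inclusions. The inclusion $(\supset)$ will come from a direct expansion of the $J$-inner product; the reverse $(\subset)$ will come from identifying the ordinary Hilbert-space orthogonal complement $\cM_{\mathfrak D}^{\perp}$ by duality and then twisting by $J$, using $\cM_{\mathfrak D}^{[\perp]} = J\cdot \cM_{\mathfrak D}^{\perp}$ (since $J^{2}=I$).

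For $(\supset)$, I take a generic $h = J(\cC_{Z,\sbm{X & -Y}})^{*} y + \sbm{g_{1} \\ g_{2}}$ from the right-hand side of \eqref{cMperprep'} (so with $(\cO_{V,W})^{*}g_{1} - (\cO_{U,W})^{*}g_{2} = -\Gamma^{*} y$) and an arbitrary $m = \cO_{\sbm{V \\ U}, W} x + \sbm{f_{1} \\ f_{2}} \in \cM_{\mathfrak D}$ (subject to $\cC_{Z,\sbm{X & -Y}}\sbm{f_{1} \\ f_{2}} = \Gamma x$), and expand $[h,m]_{J} = \langle Jh, m\rangle_{L^{2}}$. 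Using $J^{2}=I$ on the first summand of $h$ together with the orthogonal decomposition $L^{2}_{p+m}(i{\mathbb R}) = H^{2}_{p+m}(\Pi_{+}) \oplus H^{2}_{p+m}(\Pi_{-})$, the four cross-pairings between the $H^{2}(\Pi_{+})$-pieces ($(\cC_{Z,X})^{*}y, (\cC_{Z,Y})^{*}y, f_{1}, f_{2}$; note $-Z^{*}$ is stable) and the $H^{2}(\Pi_{-})$-pieces ($g_{1}, g_{2}, \cO_{V,W}x, \cO_{U,W}x$; note $W$ is antistable) vanish. Shifting adjoints across the four remaining terms collapses the pairing to
$$
[h,m]_{J} = \langle y,\, \cC_{Z,\sbm{X & -Y}}\sbm{f_{1} \\ f_{2}}\rangle + \langle (\cO_{V,W})^{*}g_{1} - (\cO_{U,W})^{*}g_{2},\, x\rangle,
$$
and the two constraints immediately yield $[h,m]_{J} = \langle y, \Gamma x\rangle - \langle \Gamma^{*}y, x\rangle = 0$.

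For $(\subset)$, starting from the representation \eqref{cMrep'} of $\cM_{\mathfrak D}$, I write $w \in L^{2}_{p+m}(i{\mathbb R})$ as $w = w_{-} + w_{+}$ with $w_{\pm} \in H^{2}_{p+m}(\Pi_{\pm})$. The condition $w \in \cM_{\mathfrak D}^{\perp}$ becomes
$$
\langle (\cO_{\sbm{V \\ U}, W})^{*} w_{-},\, x\rangle + \langle w_{+},\, \sbm{f_{1}\\f_{2}}\rangle = 0 \quad\text{for all } (x, \sbm{f_{1}\\f_{2}}) \in \ker T,
$$
where $T\colon {\mathbb C}^{n_{W}} \oplus H^{2}_{p+m}(\Pi_{+}) \to {\mathbb C}^{n_{Z}}$ is the constraint map $T(x, f) = \cC_{Z,\sbm{X & -Y}} f - \Gamma x$. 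Since $T$ has finite-dimensional codomain, $\operatorname{Ran}\, T^{*}$ is automatically closed and $(\ker T)^{\perp} = \operatorname{Ran}\, T^{*}$; hence
$$
\bigl((\cO_{\sbm{V \\ U}, W})^{*} w_{-},\; w_{+}\bigr) = T^{*} y = \bigl(-\Gamma^{*} y,\; (\cC_{Z,\sbm{X & -Y}})^{*} y\bigr)
$$
for some $y \in {\mathbb C}^{n_{Z}}$, which is obtained from $\langle T(x,f), y\rangle = \langle f, (\cC_{Z,\sbm{X & -Y}})^{*}y\rangle - \langle x, \Gamma^{*}y\rangle$. Applying $J$ throughout and setting $g := J w_{-}$ (which keeps $g$ in $H^{2}_{p+m}(\Pi_{-})$), one gets $h = Jw = J(\cC_{Z,\sbm{X & -Y}})^{*} y + g$, and the condition $(\cO_{\sbm{V \\ U}, W})^{*} w_{-} = -\Gamma^{*}y$ translates, via $(\cO_{\sbm{V \\ U}, W})^{*}Jg = (\cO_{V,W})^{*}g_{1} - (\cO_{U,W})^{*}g_{2}$, into exactly the defining constraint of \eqref{cMperprep'}.

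No serious analytic obstacle is anticipated. The main care is purely bookkeeping: tracking which pieces sit in $H^{2}(\Pi_{+})$ versus $H^{2}(\Pi_{-})$ (so that the cross-pairings vanish), and keeping the signs straight through the $J$-twist and through the adjoint identity $(\cO_{C,A})^{*} = -\cC^{\times}_{-A^{*}, C^{*}}$ recorded in the text. Notice that observability of $(U, W)$ is not invoked in the duality argument itself; only closedness of $\operatorname{Ran}\, T^{*}$ is needed, and this is immediate from the finite-dimensional codomain of $T$.
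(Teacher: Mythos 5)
Your proof is correct and rests on the same basic duality principle as the paper's, namely that the annihilator of the kernel of a bounded operator into a finite-dimensional space is the (automatically closed) range of its adjoint; only the packaging differs. The paper stays in the $J$-indefinite picture throughout: it first pins down $P_{H^{2}_{p+m}(\Pi_{+})}\cM_{\mathfrak D}^{[\perp]}$ as lying inside $J\operatorname{Ran}(\cC_{Z,\sbm{X & -Y}})^{*}$ by exploiting $J$-orthogonality to $\cM_{\mathfrak D}\cap H^{2}_{p+m}(\Pi_{+}) = \operatorname{Ker}\cC_{Z,\sbm{X & -Y}}$, and then extracts the coupling constraint on $\sbm{g_{1}\\ g_{2}}$ from a separate computation of the $J$-pairing against a generic element of $\cM_{\mathfrak D}$. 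You instead pass immediately to ordinary Hilbert-space orthogonality via $\cM_{\mathfrak D}^{[\perp]} = J\cdot\cM_{\mathfrak D}^{\perp}$, bundle the $\cO^{*}$- and $\cC$-constraints into the single operator $T(x,f)=\cC_{Z,\sbm{X & -Y}}f-\Gamma x$, and read off the whole answer from one application of $(\operatorname{Ker} T)^{\perp} = \operatorname{Ran} T^{*}$, then un-twist by $J$. Both routes are sound; your version is a bit more self-contained (you verify both inclusions explicitly, and you correctly note that only the finite codomain of $T$, not observability of $(U,W)$, is needed to close the range of $T^{*}$), while the paper's two-stage presentation is deliberately aligned with the $H^{2}(\Pi_{\pm})$-decompositions that recur throughout Section~\ref{S:synthesis}.
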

    
\begin{proof} Since $\cM_{\mathfrak D}^{[\perp]}$ is
    $J$-orthogonal to $\cM_{\mathfrak D} \cap H^{2}_{p+m}(\Pi_{+}) =
    \operatorname{Ker} \cC_{Z, \sbm{ X & -Y}}$, it follows that 
$P_{H_{p+m}^{2}(\Pi_+)} \cM_{\mathfrak D}^{[\perp]}$ is also 
    $J$-orthogonal to $\operatorname{Ker} \cC_{Z, \sbm{ X & -Y}}$.  Hence
    $P_{H^{2}_{p+m}(\Pi_+)} \cM_{\mathfrak D}^{[\perp]} \subset J 
    \operatorname{Ran}( (\cC_{Z, \sbm{X & -Y}})^{*}$ and 
    each $\bg\in\cM_{\mathfrak D}^{[\perp]}$ has the form 
$$
\bg = J(\cC_{Z, \sbm{X & -Y}})^{*}y + \sbm{ g_{1} \\ g_{2}}\quad\mbox{with}\quad 
    y \in {\mathbb C}^{n_{Z}} \; \; \mbox{and} \; \;  \sbm{g_{1} \\ g_{2}} \in H^{2 
    \perp}_{p+m}(\Pi_{+}).
$$
  For such an element to be in 
    $\cM_{\mathfrak D}^{[\perp]}$, we compute the $J$-inner product of such an 
    element against a generic element of $\cM_{\mathfrak D}$: for all 
    $\sbm{f_{1} \\ f_{2}} \in H^{2}_{p+m}(\Pi_{+})$ and $x \in 
    {\mathbb C}^{n_{Z}}$ such that $\cC_{Z, X} f_{1} - \cC_{Z, Y} 
    f_{2} = \Gamma x$, we must have
 \begin{align*}
     0 & = \left\langle J \left( J (\cC_{Z, \sbm{ X & -Y}})^{*} y + 
     \sbm{ g_{1} \\ g_{2}} \right), \cO_{\sbm{V \\ U}, W} x + \sbm{ 
     f_{1} \\ f_{2}} \right\rangle_{L^{2}_{p+m}(i {\mathbb R})} \\
     & = \langle y, \cC_{Z,X} f_{1} - \cC_{Z,Y} f_{2} 
     \rangle_{{\mathbb C}^{n_{_Z}}} +
  \langle (\cO_{V,W})^{*} g_{1} - (\cO_{U,W})^{*} g_{2}, x 
  \rangle_{{\mathbb C}^{n_{_W}}}  \\
  & = \langle y, \Gamma x \rangle_{{\mathbb C}^{n_{Z}}} + 
   \langle (\cO_{V,W})^{*} g_{1} - (\cO_{U,W})^{*} g_{2}, x 
  \rangle_{{\mathbb C}^{n_{_W}}} 
\end{align*} 
which leads to the coupling condition
$(\cO_{V,W})^{*} g_{1} - (\cO_{U,W})^{*} g_{2} = - \Gamma^{*} y$ in \eqref{cMperprep'}.
\end{proof}
As a consequence of the representation \eqref{cMperprep'} we see that
\begin{align}
& P_{H^{2}_{p+m}(\Pi_{+})} \cM_{\mathfrak D}^{[\perp]} =
\operatorname{Ran} J  (\cC_{Z,\sbm{ X & -Y}})^{*},  \notag\\
& \cM_{\mathfrak D}^{[\perp]} \cap H^{2}_{p+m}(\Pi_{-}) =
\operatorname{Ker} \begin{bmatrix} (\cO_{V,W})^{*} & - 
(\cO_{U,W})^{*} \end{bmatrix}
\label{ad9}
\end{align}
and therefore,
$$
 \cM_{\mathfrak D}^{[\perp]} \cap \sbm{ H^{2}_{p}(\Pi_{-}) \\ 0 } =
\sbm{\operatorname{Ker} (\cO_{V,W})^{*} \\ 0},\quad 
 \cM_{\mathfrak D}^{[\perp]} \cap \sbm{ 0 \\ H^{2}_{m}(\Pi_{-})} =
\sbm{ 0 \\ \operatorname{Ker} (\cO_{U,W})^{*}}.
$$
In this section we shall impose an additional assumption:
\smallskip

\noindent
\textbf{Nondegeneracy assumption:}
{\sl Not only $\cM_{\mathfrak D}$ but also $\cM_{\mathfrak D} \cap H^{2}_{p+m}(\Pi_{+})$ 
and $\cM_{\mathfrak D}^{[
\perp]} \cap H^{2}_{p+m}(\Pi_{-})$ (see \eqref{ad8} and 
\eqref{ad9}) are regular subspaces (i.e., have good Kre\u{\i}n-space 
orthogonal complements---as explained in Section \ref{S:Kreinprelim}) of the
Kre\u{\i}n space $L^{2}_{p+m}(\Pi_{+})$ (with the $J$-inner product).}

\smallskip

We proceed via a string of lemmas.

\begin{lemma} \label{L:decom}
{\rm (1)} The space $\cM_{\mathfrak D}$ given in \eqref{cMrep'} decomposes as
\begin{equation}   \label{Mdecom}
  \cM_{\mathfrak D} = \widehat{\mathbb G}_{T} [+] \cM_{{\mathfrak D},1} [+] 
  \cM_{{\mathfrak D},2},
\end{equation}
where 
\begin{align}
    \widehat{\mathbb G}_{T} &= \cM_{\mathfrak D} [-]_{J} 
    \operatorname{Ker} \cC_{Z, \sbm{ X & -Y}}, \notag \\
    \cM_{{\mathfrak D},1} &= \operatorname{Ker} \cC_{Z, \sbm{ X & -Y}}[-]_{J} 
\left(\sbm{\operatorname{Ker} \cC_{Z, X}\\ 0} \oplus  
\sbm{0\\ \operatorname{Ker} \cC_{Z, Y}}\right), \notag \\
     \cM_{{\mathfrak D}, 2} &=
\sbm{\operatorname{Ker} \cC_{Z, X}\\ 0} \oplus  \sbm{0\\ \operatorname{Ker} \cC_{Z, Y}}.
    \label{def012}
\end{align}
More explicitly, the operator $T: \operatorname{Ran}
\cO_{\sbm{ V \\ U}, W}\to \operatorname{Ran} J (\cC_{Z,\sbm{X & -Y}})^{*}$ 
is uniquely determined by the identity
\begin{equation}  \label{TGamma}
    \cC_{Z, \sbm{ X & -Y}} T \cO_{\sbm{ V \\ U}, W} = -\Gamma,
\end{equation}
and  $\widehat{\mathbb G}_{T}$ is the graph space for $-T$ parametrized as
\begin{align}
\widehat{\mathbb G}_{T} & = \left\{- \boldf + T \boldf \colon 
\boldf\in \operatorname{Ran}
\cO_{\sbm{V \\ U}, W}\right\} \notag \\
& =  \left\{- \cO_{\sbm{V \\ U}, W} x + T \cO_{\sbm{V \\ U}, W} x \colon
x \in {\mathbb C}^{n_{W}} \right\},
\label{graphT}
\end{align}
while $\cM_{{\mathfrak D},1}$ is given
explicitly by
\begin{equation}   \label{cMD1}
    \cM_{{\mathfrak D},1} = \operatorname{Ran}  \begin{bmatrix}
   ( \cC_{Z,X})^{*} (\cG_{Z,X})^{-1} \cG_{Z,Y} \\  (\cC_{Z,Y})^{*} \end{bmatrix}.
\end{equation}

{\rm (2)} Dually, the subspace $\cM_{\mathfrak D}^{[\perp]} =
L^{2}_{p+m}(i {\mathbb R}) [-]_{J} \cM_{\mathfrak D}$ 
   decomposes as
  \begin{equation}   \label{Mperpdecom}
    \cM_{\mathfrak D}^{[\perp]} = {\mathbb G}_{T^{[*]}} [+]  
    (\cM_{\mathfrak D}^{[\perp]})_{1} [+] ( \cM_{\mathfrak  D}^{[\perp]})_{2},
  \end{equation}
where
  \begin{align}
  & {\mathbb G}_{T^{[*]}}  = \cM_{\mathfrak D}^{[\perp]} 
   [-]_{J} \operatorname{Ker} \begin{bmatrix} (\cO_{V,W})^{*} & - 
   (\cO_{U,W})^{*} \end{bmatrix},\notag \\
  & (\cM_{\mathfrak D}^{[ \perp]})_{1} = 
  \operatorname{Ker}\begin{bmatrix} (\cO_{V,W})^{*} & - 
  (\cO_{U,W})^{*} \end{bmatrix}
[-]_{J} \left( \sbm{ \operatorname{Ker} (\cO_{V,W})^{*} \\ 0} 
\oplus \sbm{ 0 \\ \operatorname{Ker} 
(\cO_{U,W})^{*} } \right),\notag\\ 
   & (\cM_{\mathfrak D}^{[\perp]})_{2} =
   \sbm{ \operatorname{Ker} (\cO_{V,W})^{*} \\ 0}  \oplus \sbm{ 0 \\ \operatorname{Ker} 
   (\cO_{U,W})^{*}}.    
\label{def012perp}
  \end{align}
More explicitly,
 \begin{align*}
 {\mathbb G}_{T^{[*]}} & = \left\{ \bg + T^{[*]} \bg \colon \bg
  \in \operatorname{Ran} J (\cC_{Z, \sbm{ X & -Y}})^{*} \right\}  \\
  & =
  \left\{ J(\cC_{Z, \sbm{ X & -Y}})^{*} x +  T^{[*]} J (\cC_{Z, \sbm{ X
  & -Y}})^{*} x \colon x \in {\mathbb C}^{n_{Z}}\right\}
  \end{align*}
where  $T^{[*]} = J T^{*} J\colon  \operatorname{Ran} J (\cC_{Z, \sbm{X & -Y}})^{*}
\to \operatorname{Ran} \cO_{\sbm{ V \\ U}, W}$ is the $J$-adjoint  of $T$, and
  \begin{equation}   \label{cMDperp1}
   (\cM_{\mathfrak D}^{[ \perp]})_{1} = \operatorname{Ran} 
   \begin{bmatrix} \cO_{V,W} \\ \cO_{U,W}(\cG_{U,W})^{-1} \cG_{V,W}  \end{bmatrix}.   
   \end{equation}
\end{lemma}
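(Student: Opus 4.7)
The plan is to prove Part (1) by peeling off nested subspaces of $\cM_{\mathfrak D}$ in two stages, and then obtain Part (2) by running the dual argument on the representation \eqref{cMperprep'} of $\cM_{\mathfrak D}^{[\perp]}$ from Lemma \ref{L:cMDperp}. In the first stage of Part (1), the nondegeneracy hypothesis guarantees that $\operatorname{Ker}\cC_{Z,\sbm{X & -Y}} = \cM_{\mathfrak D} \cap H^{2}_{p+m}(\Pi_+)$ is regular as a subspace of the ambient Kre\u{\i}n space, hence \emph{a fortiori} as a subspace of $\cM_{\mathfrak D}$; taking its $J$-orthogonal complement inside $\cM_{\mathfrak D}$ (which by definition is $\widehat{\mathbb G}_T$) immediately gives $\cM_{\mathfrak D} = \widehat{\mathbb G}_T \,[+]\, \operatorname{Ker}\cC_{Z,\sbm{X & -Y}}$. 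The second stage is automatic: $\cM_{{\mathfrak D},2}$ is a manifest Kre\u{\i}n subspace of $\operatorname{Ker}\cC_{Z,\sbm{X & -Y}}$ ($J$ acts as $+I$ on the first summand and as $-I$ on the second, and the two summands are trivially $J$-orthogonal to each other), hence regular there, and yields the further $J$-orthogonal splitting $\operatorname{Ker}\cC_{Z,\sbm{X & -Y}} = \cM_{{\mathfrak D},1} \,[+]\, \cM_{{\mathfrak D},2}$.

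To identify $\widehat{\mathbb G}_T$ concretely, I would use that the $J$-orthogonal complement of $\operatorname{Ker}\cC_{Z,\sbm{X & -Y}}$ \emph{inside} $H^{2}_{p+m}(\Pi_+)$ equals $\operatorname{Ran} J(\cC_{Z,\sbm{X & -Y}})^*$, so that a generic element of $\cM_{\mathfrak D}$ written per \eqref{cMrep'} lies in $\widehat{\mathbb G}_T$ precisely when its $H^2_{p+m}(\Pi_+)$-component has the form $J \cC_{Z,\sbm{X & -Y}}^* y$. The coupling condition then reads $\cC_{Z,\sbm{X & -Y}} J \cC_{Z,\sbm{X & -Y}}^* y = \Gamma x$, i.e.\ $\Gamma_L y = \Gamma x$ by \eqref{GammaLRid}, which under nondegeneracy has the unique solution $y = \Gamma_L^{-1}\Gamma x$. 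Defining $T$ on $\operatorname{Ran}\cO_{\sbm{V\\U},W}$ (well-defined because $\cO_{\sbm{V\\U},W}$ is injective by observability of $(U,W)$) to send $\cO_{\sbm{V\\U},W} x$ to the resulting $J\cC_{Z,\sbm{X & -Y}}^* y$, and then relabelling $x \mapsto -x$, places the elements of $\widehat{\mathbb G}_T$ into the graph form \eqref{graphT}; applying $\cC_{Z,\sbm{X & -Y}}$ to both sides recovers the defining identity \eqref{TGamma}. For the explicit formula \eqref{cMD1}, the $J$-orthogonality inside $H^2_{p+m}(\Pi_+)$ decouples into ordinary orthogonality on each component (since $J$ is $\pm I$ there), so any $\sbm{f_1\\f_2} \in \cM_{{\mathfrak D},1}$ must take the form $f_1 = \cC_{Z,X}^* a$, $f_2 = \cC_{Z,Y}^* b$ with $a,b \in \mathbb C^{n_Z}$; feeding this into the coupling condition $\cC_{Z,X} f_1 = \cC_{Z,Y} f_2$ produces $\cG_{Z,X} a = \cG_{Z,Y} b$, which rearranges to \eqref{cMD1} via invertibility of $\cG_{Z,X}$ from controllability of $(Z,X)$.

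Part (2) runs mutatis mutandis: the roles of control-like and observer-like operators are interchanged, the regularity of $\cM_{\mathfrak D}^{[\perp]} \cap H^{2}_{p+m}(\Pi_-)$ replaces that of $\cM_{\mathfrak D} \cap H^2_{p+m}(\Pi_+)$, and the key invertibilities now sit with $\Gamma_R$ and $\cG_{U,W}$; the relation $T^{[*]} = JT^*J$ for the resulting operator is simply the definition of the $J$-adjoint between the two relevant range spaces. The main obstacle I foresee is the careful bookkeeping around which complements are $J$-orthogonal versus ordinary-orthogonal and in which ambient space ($H^2_{p+m}(\Pi_+)$, $H^2_{p+m}(\Pi_-)$, or $L^2_{p+m}(i\mathbb R)$) they are taken, along with checking that the Gramian invertibilities ($\Gamma_L$, $\Gamma_R$, $\cG_{Z,X}$, $\cG_{U,W}$) are genuinely provided by the nondegeneracy assumption together with controllability of $(Z,X)$ and observability of $(U,W)$.
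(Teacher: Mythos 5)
Your proof is essentially correct and follows the same overall route as the paper's, with one place where you are more explicit and one place where you are less so.

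Where you are more explicit: the paper constructs $T$ abstractly, by noting that for each $\bg\in P_{H^2_{p+m}(\Pi_-)}\cM_{\mathfrak D}$ there is (by the Nondegeneracy Assumption) a \emph{unique} $\boldf\in P_{H^2_{p+m}(\Pi_+)}\cM_{\mathfrak D}^{[\perp]}$ with $-\bg+\boldf\in\cM_{\mathfrak D}$, and then verifies \eqref{TGamma} afterwards. You instead pin $T$ down by solving the coupling constraint: writing the $H^2_{p+m}(\Pi_+)$-component as $J(\cC_{Z,\sbm{X & -Y}})^*y$, the membership condition in \eqref{cMrep'} becomes $\Gamma_L y = \Gamma x$, so $y=\Gamma_L^{-1}\Gamma x$ once you know $\Gamma_L$ is invertible. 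This is a legitimate variant, and it makes the content of the Nondegeneracy Assumption (invertibility of $\Gamma_L$, and dually $\Gamma_R$) appear on the surface. The treatments of $\cM_{{\mathfrak D},1}$, and the reduction to the two-stage orthogonal peeling, match the paper.

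Where you gloss over something real: in Part (2) you write that ``the relation $T^{[*]}=JT^*J$ for the resulting operator is simply the definition of the $J$-adjoint.'' That conflates two things. The lemma's statement does \emph{define} $T^{[*]}:=JT^*J$, but what must be \emph{proved} is that the operator produced by the dual construction (call it $T^\times$, so that $\cM_{\mathfrak D}^{[\perp]}[-]_J\operatorname{Ker}[(\cO_{V,W})^*,-(\cO_{U,W})^*]$ is the graph of $T^\times$) actually coincides with $T^{[*]}$. The paper does this by showing that both $T^\times$ and $JT^*J$ satisfy the same intertwining identity $(\cO_{\sbm{V\\U},W})^*\,J\,(\cdot)\,J\,(\cC_{Z,\sbm{X & -Y}})^* = -\Gamma^*$ and then invoking injectivity/surjectivity of the flanking maps to force equality. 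In your explicit-formula framework the check is a short computation---from the dual coupling one gets $T^\times J(\cC_{Z,\sbm{X & -Y}})^*y=\cO_{\sbm{V\\U},W}\Gamma_R^{-1}\Gamma^*y$, and one verifies directly that $[T\cO_{\sbm{V\\U},W}x,\,J(\cC_{Z,\sbm{X & -Y}})^*y]_J=[\cO_{\sbm{V\\U},W}x,\,T^\times J(\cC_{Z,\sbm{X & -Y}})^*y]_J$ using $\cG^J_{Z,\sbm{X & -Y}}=\Gamma_L$ and $\cG^J_{\sbm{V\\U},W}=-\Gamma_R$---but it is a step that has to be carried out, not absorbed into ``mutatis mutandis.''

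Two smaller bookkeeping points you should confirm in a cleaned-up write-up: (i) when you reduce $J$-orthogonality of $\widehat{\mathbb G}_T$'s elements against $\operatorname{Ker}\cC_{Z,\sbm{X & -Y}}$ to a condition on the $H^2_{p+m}(\Pi_+)$-component only, you are using that the $H^2_{p+m}(\Pi_-)$ summand $\cO_{\sbm{V\\U},W}x$ is automatically $J$-orthogonal to $H^2_{p+m}(\Pi_+)$ --- worth stating; (ii) uniqueness of $T$ from \eqref{TGamma} needs both observability (so $\cO_{\sbm{V\\U},W}$ is injective, hence surjective onto its range) and invertibility of $\Gamma_L$ (so $\cC_{Z,\sbm{X & -Y}}$ is injective on $\operatorname{Ran}J(\cC_{Z,\sbm{X & -Y}})^*$); your explicit formula encodes this implicitly, but the paper's claim that $T$ is ``uniquely determined by \eqref{TGamma}'' deserves a sentence.
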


\begin{proof}  By the Nondegeneracy Assumption we can define subspaces
\eqref{def012} and \eqref{def012perp}, so that $\cM_{\mathfrak D}$
and $\cM_{\mathfrak D}^{[\perp]}$ decompose as in \eqref {Mdecom} and 
\eqref{Mperpdecom}, respectively.

\smallskip

Given an element $\bg \in P_{H^{2}_{p+m}(\Pi_-)} \cM_{\mathfrak D}$,
there is an $\boldf\in H^{2}_{p+m}(\Pi_{+})$ so that $-\bg + \boldf \in \cM_{\mathfrak D}$; 
furthermore, one can choose 
$$
\boldf\in H^{2}_{p+m}(\Pi_{+}) [-]_{J} (\cM_{\mathfrak D} \cap H^{2}_{p+m}(\Pi_{+}))=
P_{H^{2}_{p+m}(\Pi_{+})}\cM_{\mathfrak D}^{[\perp]}. 
$$
If $\boldf'$ is another such choice, 
    then $(-\bg + \boldf) - (-\bg + \boldf') = \boldf - \boldf'$ is in
    $\cM_{\mathfrak D} \cap H^{2}_{p+m}(\Pi_{+})$ as well as in 
    $H^{2}_{p+m}(\Pi_{+}) [-]_{J} (\cM_{\mathfrak D} \cap H^{2}_{p+m}(\Pi_{+}))$.  By 
    the Nondegeneracy Assumption, we conclude that $\boldf = \boldf'$.  
    Hence there is a well-defined map $\bg \mapsto \boldf$ defining a 
    linear operator $T$ from 
$$
P_{H^{2}_{p+m}(\Pi_-)} \cM_{\mathfrak D}=\operatorname{Ran}\cO_{\sbm{ V \\ U}, W}
\quad\mbox{into}\quad
P_{H^{2}_{p+m}(\Pi_{+})}\cM_{\mathfrak D}^{[\perp]}=
\operatorname{Ran} J (\cC_{Z,\sbm{X & -Y}})^{*}
$$
(see \eqref{ad8} and \eqref{ad9}).
    In this way we arrive at a well-defined operator $T$ so that 
    $\widehat{\mathbb G}_{T}$ as in \eqref{graphT} is equal to the subspace 
    (see \eqref{def012})
$$
\cM_{\mathfrak D} [-]_{J} \left(\cM_{\mathfrak D} \cap 
H^{2}_{p+m}(\Pi_{+})\right)
=\cM_{\mathfrak D} [-]_{J} \operatorname{Ker} \cC_{Z, \sbm{ X & -Y}}.
$$
To check that $T$ is also given by 
    \eqref{TGamma}, combine the fact that 
    $$
     -\cO_{\sbm{V \\ U}, W} x + T \cO_{\sbm{V \\ U}, W} x\in \cM_{\mathfrak D}
    $$
    together with the characterization \eqref{cMrep'} for 
    $\cM_{\mathfrak D}$ to 
    deduce that
    $$
    \cC_{Z, \sbm{ X & -Y}} \cdot T \cO_{\sbm{V \\ U}, W} x = 
    -\Gamma x
    $$
    for all $x$ to arrive at \eqref{TGamma}.  
    
\smallskip

    To get the formula \eqref{cMD1}, we first note that
\begin{equation}  \label{DecoupledCZXY}
  H^{2}_{p+m}(\Pi_{+}) [-]_{J}\sbm{\operatorname{Ker} \cC_{Z, X} \\  
  \operatorname{Ker} \cC_{Z, Y}}
= \sbm{ \operatorname{Ran} 
  (\cC_{Z,X})^{*} \\ \operatorname{Ran}  (\cC_{Z,Y})^{*} }.
\end{equation}
The space $\cM_{{\mathfrak D}, 1}$ is the intersection of this space 
with $\cM_{\mathfrak D} \cap H^{2}_{p+m}(\Pi_{+})$.
Therefore, it consists of elements of the form $\sbm{ (\cC_{Z,X})^{*} y_{1} \\ 
(\cC_{Z,Y})^{*} y_{2} }$ 
subject to condition
$$
 0 = \begin{bmatrix} \cC_{Z,X} & - \cC_{Z,Y} \end{bmatrix}
 \begin{bmatrix} (\cC_{Z,X})^{*} y_{1} \\ (\cC_{Z,X})^{*} y_{2} 
 \end{bmatrix} = \cC_{Z,X} (\cC_{Z,X})^{*} y_{1} - \cC_{Z,Y} 
 (\cC_{Z,Y})^{*} y_{2}.
$$
By the $\Pi_{+}$-admissibility requirement on the data set ${\mathfrak 
D}$, the gramian $\cG_{Z,X}: = \cC_{Z,X} (\cC_{Z,X})^{*}$ is 
invertible and hence we may solve this last equation for $y_{1}$:
$$
  y_{1} = \cG_{Z,X}^{-1} \cC_{Z,Y} (\cC_{Z,Y})^{*} y_{2}.
$$
With this substitution, the element $\sbm{ (\cC_{Z,X})^{*} y_{1} \\
(\cC_{Z,Y})^{*} y_{2} }$ 
of the $J$-orthogonal complement space \eqref{DecoupledCZXY} assumes 
the form
$$
  \begin{bmatrix} (\cC_{Z,X})^{*} \cG_{Z,X}^{-1} \cC_{Z,Y} 
      (\cC_{Z,Y})^{*} y_{2} \\
      (\cC_{Z,Y})^* y_{2} \end{bmatrix}
$$
and we have arrived at the formula  \eqref{cMD1} for $\cM_{{\mathfrak 
D},1}$.

\smallskip

For the dual case (2), similar arguments starting with the 
representation \eqref{cMperprep'} for $\cM^{[\perp]}_{\mathfrak D}$ show that 
there is an operator $T^{\times}$ from 
$\operatorname{Ran} J (\cC_{Z, \sbm{ X & -Y}})^{*}$ into 
$\cM_{\mathfrak D}^{[\perp]} [-]_{J} \left(\cM_{\mathfrak D}^{[\perp]}  
\cap H^{2}_{p+m}(\Pi_{-}) \right)$ so that
$$
  \cM_{\mathfrak D}^{[\perp]} [-]_{J} \left( \cM_{\mathfrak D}^{[\perp]} 
  \cap H^{2}_{p+m}(\Pi_{-})\right) = (I + T^{\times}) 
  \operatorname{Ran} J (\cC_{Z, \sbm{ X & -Y}})^{*}.
$$
From the characterization \eqref{cMperprep'} of the space 
$\cM_{\mathfrak D}^{[\perp]}$ we see that
the condition 
$$
 J (\cC_{Z, \sbm{ X & -Y}})^{*}y + T^{\times} J (\cC_{Z, \sbm{ X & 
-Y}})^{*}y  \in \cM_{\mathfrak D}^{[\perp]}
$$
requires that, for all $y \in {\mathbb C}^{n_{Z}}$,
$$
\begin{bmatrix} (\cO_{V,W})^{*} & - (\cO_{U,W})^{*} \end{bmatrix} 
   T^{\times} \begin{bmatrix} (\cC_{Z,X})^{*} \\ (\cC_{Z,Y})^{*} 
    \end{bmatrix} y = - \Gamma^{*}y.
$$
Cancelling off the vector $y$ and rewriting as an operator equation 
then gives:
\begin{align*}
 & \begin{bmatrix} (\cO_{V,W})^{*} & - (\cO_{U,W})^{*} \end{bmatrix} 
   T^{\times} \begin{bmatrix} (\cC_{Z,X})^{*} \\ (\cC_{Z,Y})^{*} 
    \end{bmatrix} \\
 & \quad \quad   =
\begin{bmatrix} (\cO_{V,W})^{*} & (\cO_{U,W})^{*} \end{bmatrix} J 
    T^{\times} J  \begin{bmatrix} (\cC_{Z,X})^{*} \\ (-\cC_{Z,Y})^{*} 
    \end{bmatrix} \\
  & \quad \quad =  (\cO_{\sbm{V \\ U}, W})^{*} J T^{\times} J (\cC_{Z, \sbm{ X & 
    -Y}})^{*} = -\Gamma^{*}.
\end{align*}
Taking adjoints of both sides of the identity \eqref{TGamma} 
satisfied by $T$, we see that 
$$
  (\cO_{\sbm{V \\ U}, W} )^{*} T^{*} ( \cC_{Z, \sbm{ X & -Y}})^{*} = 
  -\Gamma^{*}.
$$
Since $(\cO_{\sbm{V \\ U}, W} )^{*}$ is injective on 
the range space of $T^{\times}$ or $JT^{*}J$ and $(\cC_{Z, \sbm{ X & 
-Y}})^{*}$ maps onto the domain space of $T^{\times}$ or $T^{*}$, it 
follows that $T^{\times} = J T^{*} J = T^{[*]}$.
The remaining points in statement (2) of the Lemma follow in much the 
same way as the corresponding points in statement (1).
\end{proof}
\begin{lemma}  \label{L:MperpK}
{\rm (1)} With $\cK$ as in \eqref{defcK}, the subspace \eqref{cPdata} decomposes as
\begin{equation}   \label{MperpK}
    \cM_{\mathfrak D}^{[\perp \cK]} =  {\mathbb G}_{T^{[*]}} 
    [+] (\cM_{\mathfrak D}^{[\perp]})_{1} [+] \sbm{ 
    \operatorname{Ker} (\cO_{V,W})^{*} \\ 0 }.
\end{equation}
In particular, $\cM_{\mathfrak D}^{[\perp \cK]}$ is $J$-positive if and only 
if its subspace 
$$
    (\cM_{\mathfrak D}^{[\perp \cK]})_{0}:=  {\mathbb G}_{T^{[*]}} 
    [+] (\cM_{\mathfrak D}^{[\perp]})_{1}
$$
is $J$-positive.

\smallskip

{\rm (2)} Dually, define a space $\cK' \subset L^{2}_{p+m}(i{\mathbb R})$ 
by
\begin{equation}  \label{defcK'}
  \cK' = \sbm{ H^{2}_{p}(\Pi_{-}) \oplus \operatorname{Ran} 
  (\cC_{Z,X})^{*} \\ L^{2}_{m}(i {\mathbb R}) }.
\end{equation}
Then  $\cM_{\mathfrak D}^{[ \perp]} \subset \cK'$ and the space
$$
(\cM_{\mathfrak D}^{[ \perp]})^{[ \perp \cK']}: = \cK' [-]_{J} \cM_{\mathfrak D}^{[ \perp]} 
= \cK' \cap \cM_{\mathfrak D}
$$
is given by
$$
    (\cM^{[\perp]})^{[ \perp \cK']} = \widehat{\mathbb G}_{T} [+] 
\cM_{{\mathfrak D},1} [+] \sbm{ 0 \\ \operatorname{Ker} \cC_{Z,Y} }.
$$
 In particular,  $(\cM_{\mathfrak D}^{[\perp]})^{[ \perp \cK']}$ is 
 $J$-negative if and only if its subspace
$$ 
( (\cM_{\mathfrak D}^{[\perp]})^{[ \perp \cK']})_{0}: = {\mathbb G}_{T} [+] 
\cM_{{\mathfrak D}, 1}
$$
is $J$-negative.

 \end{lemma}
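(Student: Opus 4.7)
The plan is to derive both parts of Lemma \ref{L:MperpK} from the three-term decompositions of $\cM_{\mathfrak D}$ and $\cM_{\mathfrak D}^{[\perp]}$ already established in Lemma \ref{L:decom}, combined with the observation that a $J$-orthogonal complement taken inside a sub-Kre\u{\i}n space can be rewritten as an intersection with the ambient $J$-orthogonal complement. Concretely, since $\cM_{\mathfrak D}\subset\cK$ we have
$$
\cM_{\mathfrak D}^{[\perp\cK]}=\cK\cap\cM_{\mathfrak D}^{[\perp]},
$$
and dually, using $(\cM_{\mathfrak D}^{[\perp]})^{[\perp]}=\cM_{\mathfrak D}$ (valid because $J$ is invertible and $\cM_{\mathfrak D}$ is closed), we have $(\cM_{\mathfrak D}^{[\perp]})^{[\perp\cK']}=\cK'\cap\cM_{\mathfrak D}$.

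For part (1) I would expand an arbitrary $g\in\cM_{\mathfrak D}^{[\perp]}$ as $g=g_0+g_1+g_{2a}+g_{2b}$ according to Lemma \ref{L:decom}(2), with $g_0\in\mathbb G_{T^{[*]}}$, $g_1\in(\cM_{\mathfrak D}^{[\perp]})_1$, $g_{2a}\in\sbm{\operatorname{Ker}(\cO_{V,W})^*\\ 0}$, and $g_{2b}\in\sbm{0\\\operatorname{Ker}(\cO_{U,W})^*}$. A direct inspection shows $g_0,g_1,g_{2a}\in\cK$: both the $\sbm{-X^*\\-Y^*}(\lam I+Z^*)^{-1}x$ contribution in $\mathbb G_{T^{[*]}}$ and the $\cO_{\sbm{V\\U},W}$-contributions in $\mathbb G_{T^{[*]}}$ and $(\cM_{\mathfrak D}^{[\perp]})_1$ have bottom components in $H^2_m(\Pi_+)+\operatorname{Ran}\cO_{U,W}=\psi^{-1}H^2_m(\Pi_+)$, while $g_{2a}$ sits in $\sbm{L^2_p(i\mathbb R)\\ 0}\subset\cK$. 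Forcing $g\in\cK$ therefore forces $g_{2b}\in\cK$, i.e., its bottom component lies in $\psi^{-1}H^2_m(\Pi_+)\cap H^2_m(\Pi_-)=\operatorname{Ran}\cO_{U,W}$; but it also lies in $\operatorname{Ker}(\cO_{U,W})^*=H^2_m(\Pi_-)\ominus\operatorname{Ran}\cO_{U,W}$, so $g_{2b}=0$. The reverse inclusion is immediate since each of the three surviving pieces sits in both $\cK$ and $\cM_{\mathfrak D}^{[\perp]}$. The $J$-orthogonality of the decomposition transfers verbatim from Lemma \ref{L:decom}(2). Finally, $\sbm{\operatorname{Ker}(\cO_{V,W})^*\\ 0}$ is strictly $J$-positive because $J$ restricts to $+I_p$ on the first coordinate, so the positivity of $\cM_{\mathfrak D}^{[\perp\cK]}$ is equivalent to positivity of the $J$-orthogonal complement of this piece inside $\cM_{\mathfrak D}^{[\perp\cK]}$, namely $(\cM_{\mathfrak D}^{[\perp\cK]})_0=\mathbb G_{T^{[*]}}[+](\cM_{\mathfrak D}^{[\perp]})_1$.

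Part (2) proceeds by the symmetric argument applied to Lemma \ref{L:decom}(1). One first checks $\cM_{\mathfrak D}^{[\perp]}\subset\cK'$ by noting that each element of $\cM_{\mathfrak D}^{[\perp]}$ has top component decomposing as an $H^2_p(\Pi_-)$ piece (from the $\sbm{g_1\\g_2}$ term in \eqref{cMperprep'}) plus an $\operatorname{Ran}(\cC_{Z,X})^*$ piece (from the $J(\cC_{Z,\sbm{X & -Y}})^*y$ term). Then one expands $h=h_0+h_1+h_{2a}+h_{2b}\in\cM_{\mathfrak D}$ via Lemma \ref{L:decom}(1) and observes that $\widehat{\mathbb G}_T$, $\cM_{{\mathfrak D},1}$, and $\sbm{0\\\operatorname{Ker}\cC_{Z,Y}}$ all sit in $\cK'$, while $\sbm{\operatorname{Ker}\cC_{Z,X}\\ 0}$ fails to sit in $\cK'$ because $\operatorname{Ker}\cC_{Z,X}\subset H^2_p(\Pi_+)\ominus\operatorname{Ran}(\cC_{Z,X})^*$ meets $H^2_p(\Pi_-)\oplus\operatorname{Ran}(\cC_{Z,X})^*$ only at $0$. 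Imposing $h\in\cK'$ therefore forces the $\sbm{\operatorname{Ker}\cC_{Z,X}\\ 0}$-component to vanish, giving the claimed decomposition; strict $J$-negativity of $\sbm{0\\\operatorname{Ker}\cC_{Z,Y}}$ (its $J$-form restricts to $-\langle\cdot,\cdot\rangle_{H^2_m}$) then reduces negativity of $(\cM_{\mathfrak D}^{[\perp]})^{[\perp\cK']}$ to that of $\widehat{\mathbb G}_T[+]\cM_{{\mathfrak D},1}$.

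I expect the main technical point, and the only place where care is needed, to be the two vanishing arguments for $g_{2b}$ and for the $\sbm{\operatorname{Ker}\cC_{Z,X}\\ 0}$-component; these rely on the decompositions $H^2_m(\Pi_-)=\operatorname{Ran}\cO_{U,W}\oplus\operatorname{Ker}(\cO_{U,W})^*$ and $H^2_p(\Pi_+)=\operatorname{Ran}(\cC_{Z,X})^*\oplus\operatorname{Ker}\cC_{Z,X}$ being orthogonal in the standard $L^2$ inner product, which follows from closedness of these finite-dimensional ranges together with the standard identity $\operatorname{Ker} T^*=(\operatorname{Ran} T)^\perp$. Once these vanishing arguments are in hand, the $J$-orthogonality of the decomposition and the positivity/negativity reductions are purely formal consequences of Lemma \ref{L:decom}.
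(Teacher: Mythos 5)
Your proof is correct and follows essentially the same route as the paper: decompose $\cM_{\mathfrak D}^{[\perp\cK]}=\cK\cap\cM_{\mathfrak D}^{[\perp]}$ (and dually $\cK'\cap\cM_{\mathfrak D}$) via the three-term decompositions of Lemma \ref{L:decom}, check that ${\mathbb G}_{T^{[*]}}$ and $(\cM_{\mathfrak D}^{[\perp]})_1$ already lie in $\cK$, and observe that the remaining summand meets $\cK$ only in $\sbm{\operatorname{Ker}(\cO_{V,W})^*\\ 0}$. Your explicit vanishing argument for the $\sbm{0\\\operatorname{Ker}(\cO_{U,W})^*}$-component (via $\psi^{-1}H^2_m(\Pi_+)\cap H^2_m(\Pi_-)=\operatorname{Ran}\cO_{U,W}$ and orthogonality of range and kernel of the adjoint) is exactly what makes the paper's terse step $(\cM_{\mathfrak D}^{[\perp]})_2\cap\cK=\sbm{\operatorname{Ker}(\cO_{V,W})^*\\ 0}$ rigorous, and your part (2) spells out the "dual" computation the paper leaves to the reader.
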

 
\begin{proof}
    By definition, $\cM_{\mathfrak D}^{[ \perp \cK]} = \cK \cap 
    \cM_{\mathfrak D}^{[ \perp ]}$, where $\cM_{\mathfrak D}^{[ \perp
    ]}$ is given by \eqref{Mperpdecom} and where, due to \eqref{defcK} and 
\eqref{cMrep}, $\cK =\sbm{ L^{2}(i {\mathbb R})     \\ \operatorname{Ran} \cO_{U,W} 
\oplus H^{2}_{m}(\Pi_{+})}$. Note that 
    $$
\cG_{T^{[*]}} \subset \cK, \quad (\cM_{\mathfrak D}^{[ 
\perp]})_{1} \subset H^{2}_{p+m}(\Pi_{+}) \subset \cK,
$$
while
$$
    (\cM_{\mathfrak D}^{[ \perp]})_{2} \cap \cK =
 \sbm{ \operatorname{Ker} (\cO_{V,W})^{*} \\ \operatorname{Ker} 
 (\cO_{U,W})^{*}} \cap \sbm{ L^{2}_{p}(i{\mathbb R}) \\ 
 \operatorname{Ran} \cO_{U,W} \oplus H^{2}_{m}(\Pi_{+}) } = 
\sbm{ \operatorname{ Ker } (\cO_{V,W})^{*} \\ 0 }.
$$
Putting the pieces together leads to the decomposition \eqref{MperpK}.
Since the $J$-orthogonal summand  $\sbm{ \operatorname{Ker} 
(\cO_{V,W}))^{*} \\ 0 }$ is clearly $J$-positive, it follows 
that $\cM_{\mathfrak D}^{[\perp \cK]}$ is $J$-positive if and only if 
$\widehat {\mathbb G}_{T^{[*]}} [+] (\cM_{\mathfrak D}^{[ \perp 
]})_{1}$ is $J$-positive.
Statement (2) follows in a similar way.
\end{proof}

\begin{lemma}  \label{L:posnegsubspaces}
{\rm (1)} The subspace ${\mathbb G}_{T^{[*]}}$ is $J$-positive if and only if 
$I + T T^{[*]}$ is $J$-positive on the subspace
$P_{H^{2}_{p+m}(\Pi_{+})}\cM_{\mathfrak D}^{[\perp]} = 
\operatorname{Ran} J (\cC_{Z, \sbm{ X & -Y }})^{*}$.

{\rm (2)} The subspace $(\cM_{\mathfrak D}^{[ \perp ]})_{1}$ is 
$J$-positive if and only if 
the subspace \\ $P_{H^{2}_{p+m}(\Pi_-)} \cM_{\mathfrak D}  =  
\operatorname{Ran} \cO_{\sbm{ V \\ U}, W}$
 is $J$-negative.

{\rm (3)} The subspace $\widehat {\mathbb G}_{T}$ is  $J$-negative  if and 
only if $I + T^{[*]} T$ is a $J$-negative operator on the subspace 
$P_{H^{2}_{p+m}(\Pi_-)} \cM_{\mathfrak D} = \operatorname{Ran}  
\cO_{\sbm{ V \\ U},W}$.

\smallskip

{\rm (4)} The subspace $\cM_{{\mathfrak D}, 1}$ is  $J$-negative  if and only 
if the subspace \\
$P_{H^{2}_{p+m}(\Pi_+)} \cM_{\mathfrak D}^{[ \perp ]}  = 
\operatorname{Ran} J (\cC_{Z, \sbm{ X & -Y}})^{*}$
is  $J$-positive.
\end{lemma}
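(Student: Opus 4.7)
The plan is to handle (1) and (3) by a direct Kre\u{\i}n-inner-product calculation on the graph subspaces, exploiting the $L^{2}$-orthogonality of $H^{2}_{p+m}(\Pi_{+})$ and $H^{2}_{p+m}(\Pi_{-})$ in $L^{2}_{p+m}(i{\mathbb R})$, and to handle (2) and (4) by plugging into the explicit parametrizations \eqref{cMDperp1} and \eqref{cMD1} of Lemma \ref{L:decom} and reducing the $J$-signature condition to an operator inequality between gramians. In both cases the resulting gramian inequality should coincide with the $J$-signature condition on the conjugate range space via the identities \eqref{GammaLRid}, namely $\cG^{J}_{\sbm{V \\ U}, W} = \cG_{V,W} - \cG_{U,W} = -\Gamma_{R}$ and $\cG^{J}_{\sbm{X & -Y}, Z} = \cG_{Z,X} - \cG_{Z,Y} = \Gamma_{L}$.

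For (1) I would parametrize ${\mathbb G}_{T^{[*]}}$ by $\bg \mapsto \bg + T^{[*]}\bg$ with $\bg \in \operatorname{Ran} J(\cC_{Z,\sbm{X & -Y}})^{*} \subset H^{2}_{p+m}(\Pi_{+})$ and $T^{[*]}\bg \in \operatorname{Ran}\cO_{\sbm{V \\ U}, W} \subset H^{2}_{p+m}(\Pi_{-})$. The two summands lie in $L^{2}$-orthogonal subspaces, and since $J = \sbm{I_{p} & 0 \\ 0 & -I_{m}}$ preserves each of $H^{2}_{p+m}(\Pi_{\pm})$, the cross terms in $[\bg + T^{[*]}\bg,\,\bg + T^{[*]}\bg]_{J}$ vanish. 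Combined with the defining $J$-adjoint identity $[T^{[*]}\bg, T^{[*]}\bg]_{J} = [\bg, T T^{[*]}\bg]_{J}$, this rewrites the $J$-squared norm as $[\bg, (I + TT^{[*]})\bg]_{J}$, yielding the equivalence. Part (3) is the symmetric computation: parametrize $\widehat{\mathbb G}_{T}$ by $\boldf \mapsto -\boldf + T\boldf$ with $\boldf \in \operatorname{Ran}\cO_{\sbm{V \\ U}, W}$, and the same cross-term cancellation and adjoint identity deliver $[-\boldf + T\boldf, -\boldf + T\boldf]_{J} = [\boldf, (I + T^{[*]}T)\boldf]_{J}$.

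For (2) I would use the explicit description \eqref{cMDperp1}: a generic element of $(\cM_{\mathfrak D}^{[\perp]})_{1}$ has the form $\sbm{\cO_{V,W}x \\ \cO_{U,W}\cG_{U,W}^{-1}\cG_{V,W}x}$ for $x \in {\mathbb C}^{n_{W}}$, and using $\cO_{V,W}^{*}\cO_{V,W} = \cG_{V,W}$ and its $U$-analogue, its $J$-squared norm equals
\[
\langle\cG_{V,W}x,x\rangle - \langle\cG_{V,W}\cG_{U,W}^{-1}\cG_{V,W}x, x\rangle .
\]
Writing $A := \cG_{U,W}^{-1/2}\cG_{V,W}\cG_{U,W}^{-1/2}$ and factoring $\cG_{V,W} - \cG_{V,W}\cG_{U,W}^{-1}\cG_{V,W} = \cG_{U,W}^{1/2}A(I - A)\cG_{U,W}^{1/2}$ converts $J$-positivity of $(\cM_{\mathfrak D}^{[\perp]})_{1}$ into $A \preceq I$, i.e., $\cG_{V,W} \preceq \cG_{U,W}$. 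On the other hand, $\operatorname{Ran}\cO_{\sbm{V \\ U}, W}$ is $J$-negative exactly when $\cG_{V,W} - \cG_{U,W} \preceq 0$ by \eqref{GammaLRid}, which is the same inequality. Part (4) follows by the analogous script applied to \eqref{cMD1}, producing the inequality $\cG_{Z,Y} \preceq \cG_{Z,X}$, which matches $J$-positivity of $\operatorname{Ran} J(\cC_{Z,\sbm{X & -Y}})^{*}$ via $\cG_{Z,X} - \cG_{Z,Y} = \Gamma_{L}$.

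The step I regard as the main point requiring care is the square-root similarity in (2) and (4): it needs invertibility of $\cG_{U,W}$ (resp.\ $\cG_{Z,X}$), but this is automatic from the $\Pi_{+}$-admissibility hypothesis that $(U,W)$ is observable (resp.\ $(Z,X)$ is controllable), which forces the finite matrix $\cG_{U,W} = \cO_{U,W}^{*}\cO_{U,W}$ (resp.\ $\cG_{Z,X} = \cC_{Z,X}\cC_{Z,X}^{*}$) to be strictly positive. With that in hand, the rest is book-keeping: the cross-term cancellation in (1) and (3) is immediate from the $H^{2}(\Pi_{\pm})$-decomposition, and the gramian identities \eqref{GammaLRid} translate the inequalities in (2) and (4) into the $J$-signature conditions on the conjugate range spaces.
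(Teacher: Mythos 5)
Your proof is correct and your computation for parts (1) and (3) coincides with the paper's: the $J$-squared norm of a graph-space element splits along the $H^{2}_{p+m}(\Pi_{+})\oplus H^{2}_{p+m}(\Pi_{-})$ decomposition (which is $J$-orthogonal since $J$ acts fiberwise), and the $J$-adjoint identity folds the two surviving terms into $[\bg,(I+TT^{[*]})\bg]_{J}$ (resp.\ $[\boldf,(I+T^{[*]}T)\boldf]_{J}$).

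For parts (2) and (4) you take a genuinely different algebraic route.  The paper's argument embeds the Schur complement $\cG_{V,W}-\cG_{V,W}\cG_{U,W}^{-1}\cG_{V,W}$ into a $2\times 2$ block matrix $\sbm{\cG_{V,W} & \cG_{V,W}\\ \cG_{V,W} & \cG_{U,W}}$, then congruates by $\operatorname{diag}(\cG_{V,W}^{1/2},I)$ and takes a second Schur complement, tracking the possible degeneracy of $\cG_{V,W}$ via the restriction $I_{\operatorname{Ran}\cG_{V,W}}$.  You instead conjugate directly by the invertible $\cG_{U,W}^{1/2}$: with $A=\cG_{U,W}^{-1/2}\cG_{V,W}\cG_{U,W}^{-1/2}\succeq 0$, the Gramian becomes $\cG_{U,W}^{1/2}A(I-A)\cG_{U,W}^{1/2}$, which is $\succeq 0$ if and only if $A\preceq I$, i.e.\ $\cG_{V,W}\preceq\cG_{U,W}$.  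This is a single clean similarity rather than a double Schur-complement chain, needs invertibility only of $\cG_{U,W}$ (guaranteed by observability of $(U,W)$, as you observe), and entirely sidesteps the non-invertibility of $\cG_{V,W}$ that forces the paper's range restriction.  One cosmetic point: invoking \eqref{GammaLRid} to translate $J$-negativity of $\operatorname{Ran}\cO_{\sbm{V\\U},W}$ is unnecessary here---that range's $J$-Gramian is $\cG_{V,W}-\cG_{U,W}$ by direct computation, and the identification with $-\Gamma_{R}$ is only needed later in Lemma~\ref{L:GammaDpos}.  Also, the implicit step $A(I-A)\succeq 0 \Leftrightarrow A\preceq I$ uses $A\succeq 0$ and a spectral argument; you might make that one-line observation explicit.
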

     
\begin{proof}
To prove (1), note that ${\mathbb G}_{T^{[*]}}$ being a $J$-positive 
subspace means that
$$
 \left\langle \begin{bmatrix} I \\ T^{[*]} \end{bmatrix} x, \, 
 \begin{bmatrix} I \\ T^{[*]} \end{bmatrix} x\right\rangle_{J \oplus J} =
     \langle (I + T T^{[*]}) x, x \rangle_{J} \ge 0
$$
for all $x \in \operatorname{Ran} J (\cC_{Z, \sbm{ X & -Y}})^{*}$, i.e., that
$I + T T^{[*]}$ is a $J$-positive operator.

\smallskip

To prove (2), use \eqref{cMDperp1} to see that elements $\bg$ of 
$(\cM_{\mathfrak D}^{[\perp]})_{1}$ have the form 
$$
\bg =\sbm{ \cO_{V,W} \\ 
\cO_{U,W} (\cG_{U,W})^{-1} \cG_{V,W} } x\quad\mbox{for some} \quad x \in {\mathbb 
C}^{n_{W}}.
$$  
The associated $J$-gramian is then given by
\begin{align*}
 &   \begin{bmatrix} (\cO_{V,W})^{*} & \cG_{V,W} (\cG_{U,W})^{-1} 
	(\cO_{U,W})^{*} \end{bmatrix} \begin{bmatrix} I_{p} & 0 \\ 0 
	& -I_{m} \end{bmatrix} \begin{bmatrix} \cO_{V,W} \\ \cO_{U,W} 
	(\cG_{U,W})^{-1} \cG_{V,W} \end{bmatrix} \\
& = \cG_{V,W} - \cG_{V,W} (\cG_{U,W})^{-1} \cG_{V,W}.
\end{align*}
By a Schur-complement analysis, this defines a negative semidefinite operator 
(in fact by our Nondegeneracy Assumption, a negative definite operator) if and only if
\begin{align*}
    \begin{bmatrix} \cG_{V,W} & \cG_{V,W} \\ \cG_{V,W} & \cG_{U,W} 
    \end{bmatrix}  = \begin{bmatrix} \cG_{V,W}^{\frac{1}{2}} & 0 \\ 0 
    & I \end{bmatrix}\begin{bmatrix} I_{\operatorname{Ran} \cG_{V,W}} 
    & \cG_{V,W}^{\frac{1}{2}} \\ \cG_{V,W}^{\frac{1}{2}} & \cG_{U,W} 
    \end{bmatrix} \begin{bmatrix} \cG_{V,W}^{\frac{1}{2}} & 0 \\ 0 & 
    I \end{bmatrix} \prec 0,
\end{align*}
which in turn happens if and only if
$$ 
\begin{bmatrix} I_{\operatorname{Ran} \cG_{V,W}} 
    & \cG_{V,W}^{\frac{1}{2}} \\ \cG_{V,W}^{\frac{1}{2}} & \cG_{U,W} 
    \end{bmatrix} \prec 0.
$$
Yet another Schur-complement analysis converts this to the condition 
$$
\cG_{U,W} - \cG_{V,W} \prec 0
$$ 
which is equivalent to 
$\operatorname{Ran} \cO_{\sbm{V \\ U}, W}$ being a $J$-negative subspace.

The proofs of statements (3) and (4) are parallel to those  of (1) 
and (2) respectively.
\end{proof} 

\begin{lemma}  \label{L:GammaD-factored} The Pick matrix $\bGamma_{\mathfrak D}$ 
    \eqref{GammaData} can be factored as follows:
\begin{equation}  \label{GammaData-factored}	
\bGamma_{\mathfrak D}
 = \begin{bmatrix} -\cC_{Z, \sbm{ X & -Y}} & 0 \\ 0 & 
(\cO_{\sbm{V \\ U},W})^{*} J \end{bmatrix}
\begin{bmatrix} I & T \\ T^{[*]} & -I \end{bmatrix}
    \begin{bmatrix} -J (\cC_{Z, \sbm{ X & -Y}})^{*} & 0 \\ 0 & 
	\cO_{\sbm{ V \\ U}, W} \end{bmatrix}.
\end{equation}
\end{lemma}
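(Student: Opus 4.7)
The plan is to verify the factorization by direct block-matrix computation, using the characterizations of the data operators established earlier in this section. Specifically, I will rely on the identities $\Gamma_L = \cG^J_{Z,\sbm{X & -Y}} = \cC_{Z,X}\cC_{Z,X}^* - \cC_{Z,Y}\cC_{Z,Y}^*$ and $\Gamma_R = -\cG^J_{\sbm{V \\ U},W} = \cO_{U,W}^*\cO_{U,W} - \cO_{V,W}^*\cO_{V,W}$ from \eqref{GammaLRid}, together with the defining property $\cC_{Z,\sbm{X & -Y}} T \cO_{\sbm{V \\ U},W} = -\Gamma$ of the operator $T$ from \eqref{TGamma}, and the elementary $J$-adjoint relation $T^{[*]} = J T^* J$ (so that $J T^{[*]} J = T^*$, since $J^2 = I$).

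The core of the calculation is to first multiply the middle factor with the right factor of the claimed product to obtain
\[
\begin{bmatrix} -J(\cC_{Z,\sbm{X & -Y}})^* & T\cO_{\sbm{V \\ U},W} \\ -T^{[*]}J(\cC_{Z,\sbm{X & -Y}})^* & -\cO_{\sbm{V \\ U},W} \end{bmatrix},
\]
then premultiply by the left factor. Entry $(1,1)$ becomes $\cC_{Z,\sbm{X & -Y}} J (\cC_{Z,\sbm{X & -Y}})^*$, which expands using the block decomposition $\cC_{Z,\sbm{X & -Y}} = [\cC_{Z,X} \; -\cC_{Z,Y}]$ and $J = \sbm{I_p & 0 \\ 0 & -I_m}$ to $\cC_{Z,X}\cC_{Z,X}^* - \cC_{Z,Y}\cC_{Z,Y}^* = \Gamma_L$. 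Entry $(1,2)$ is $-\cC_{Z,\sbm{X & -Y}} T \cO_{\sbm{V \\ U},W} = \Gamma$ by \eqref{TGamma}. Entry $(2,1)$ is $-\cO_{\sbm{V \\ U},W}^* J T^{[*]} J (\cC_{Z,\sbm{X & -Y}})^* = -\cO_{\sbm{V \\ U},W}^* T^* (\cC_{Z,\sbm{X & -Y}})^* = -(\cC_{Z,\sbm{X & -Y}} T \cO_{\sbm{V \\ U},W})^* = \Gamma^*$. Finally, entry $(2,2)$ is $-\cO_{\sbm{V \\ U},W}^* J \cO_{\sbm{V \\ U},W}$, which expands as $-(\cO_{V,W}^*\cO_{V,W} - \cO_{U,W}^*\cO_{U,W}) = \Gamma_R$.

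There is no genuine obstacle: the entire lemma is a bookkeeping exercise once the operator $T$ has been isolated in Lemma \ref{L:decom} and the $J$-gramians have been identified with $\Gamma_L$ and $-\Gamma_R$. The only mild point of care is the computation of entry $(2,1)$, where one must use $T^{[*]} = JT^*J$ carefully so that the flanking $J$'s cancel with those already present from the left and right outer factors; all other entries follow from the structural identities noted above. Consequently the proof will consist of a single display carrying out the two matrix multiplications, with a sentence of justification after each entry pointing to \eqref{GammaLRid}, \eqref{TGamma}, and the definition of $T^{[*]}$.
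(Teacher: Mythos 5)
Your proposal is correct and follows exactly the same route as the paper: multiply out the triple product, then identify each block of the result using the $J$-gramian identities $\cG^{J}_{Z,\sbm{X & -Y}} = \Gamma_{L}$ and $\cG^{J}_{\sbm{V\\U},W} = -\Gamma_{R}$ from \eqref{GammaLRid}, the relation $\cC_{Z,\sbm{X & -Y}} T \cO_{\sbm{V\\U},W} = -\Gamma$ from \eqref{TGamma}, and (for the $(2,1)$-entry) the fact that $J T^{[*]} J = T^{*}$. The paper's proof is exactly this one-display multiplication with the same block-by-block justification, so there is nothing to add.
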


\begin{proof}  Multiplying out the expression on the right-hand 
    side in \eqref{GammaData-factored}, we get
$$
\begin{bmatrix}  \cG^{J}_{Z, \sbm{ X & -Y}} & - \cC_{Z, \sbm{ X & -Y}} 
    T \cO_{\sbm{V \\ U}, W} \\
-( \cO_{\sbm{V \\ U}, W})^{*} J T^{[*]} J (\cC_{Z, \sbm{X & -Y}})^{*} 
 & - \cG^{J}_{\sbm{V \\ U}, W} \end{bmatrix},
$$
which is exactly $\sbm{\Gamma_{L} & \Gamma \\ \Gamma^{*} & \Gamma_{R}}=:
\bGamma_{\mathfrak D}$
as we can see from the identities \eqref{GammaLRid} and \eqref{TGamma}.
\end{proof}

\begin{lemma} \label{L:GammaDpos}
 The following conditions are equivalent:
    
\begin{enumerate}
    \item The matrix $\bGamma_{\mathfrak D}$ \eqref{GammaData} is 
    positive.
    
     \item The subspace $P_{H^{2}_{p+m}(\Pi_-)} \cM_{\mathfrak D} = 
    \operatorname{Ran} \cO_{\sbm{V \\ U}, W}$ is $J$-negative and the 
    subspace ${\mathbb G}_{T^{[*]}}$ is $J$-positive.
    
    \item The subspace $P_{H^{2}_{p+m}(\Pi_+)} \cM_{\mathfrak D}^{[ \perp]} 
    = \operatorname{Ran} J (\cC_{Z, \sbm{X & -Y}})^{*}$ is 
    $J$-positive and the subspace $\widehat {\mathbb G}_{T}$ is 
    $J$-negative.

\end{enumerate}
\end{lemma}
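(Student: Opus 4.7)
The plan is to build on the factorization \eqref{GammaData-factored} from Lemma \ref{L:GammaD-factored}, which writes $\bGamma_{\mathfrak D} = E M F$ with outer factors
$$
E = \begin{bmatrix} -\cC_{Z, \sbm{X & -Y}} & 0 \\ 0 & (\cO_{\sbm{V \\ U},W})^{*} J \end{bmatrix},
\qquad
F = \begin{bmatrix} -J(\cC_{Z, \sbm{X & -Y}})^{*} & 0 \\ 0 & \cO_{\sbm{V \\ U}, W} \end{bmatrix}.
$$
A direct inspection shows that $E^{*} v = J F v$ for every $v \in \mathbb{C}^{n_Z} \oplus \mathbb{C}^{n_W}$, where $J$ on the right denotes the fundamental symmetry on $L^{2}_{p+m}(i\mathbb{R})$. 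Consequently
$$
\langle \bGamma_{\mathfrak D} v, v\rangle = [MFv,\, Fv]_J.
$$
Since $(Z,X)$ is controllable and $(U,W)$ is observable, $F$ is injective with image exactly $B_1 \oplus B_2$, where we set $B_1 := \operatorname{Ran} J(\cC_{Z, \sbm{X & -Y}})^{*}$ and $B_2 := \operatorname{Ran} \cO_{\sbm{V \\ U}, W}$. Thus condition~(1) is equivalent to strict positivity of the $J$-form $h \mapsto [Mh,h]_J$ on $B_1 \oplus B_2$.

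The core of the proof is to complete the square in $[Mh,h]_J$ in two distinct ways. Expanding $[Mh, h]_J = [h_1, h_1]_J + 2\operatorname{Re}[Th_2, h_1]_J - [h_2, h_2]_J$ and invoking the $J$-adjoint identities $[Th_2, h_1]_J = [h_2, T^{[*]} h_1]_J$, $[Th_2, Th_2]_J + [h_2, h_2]_J = [(I + T^{[*]} T) h_2, h_2]_J$, and $[h_1, h_1]_J + [T^{[*]} h_1, T^{[*]} h_1]_J = [(I + TT^{[*]}) h_1, h_1]_J$, one obtains the pair of dual representations
\begin{align*}
[Mh, h]_J &= [h_1 + Th_2,\, h_1 + Th_2]_J - [(I + T^{[*]} T) h_2,\, h_2]_J, \\
[Mh, h]_J &= [(I + TT^{[*]}) h_1,\, h_1]_J - [h_2 - T^{[*]} h_1,\, h_2 - T^{[*]} h_1]_J.
\end{align*}
Because each of the shifts $(h_1, h_2) \mapsto (h_1 + T h_2, h_2)$ and $(h_1, h_2) \mapsto (h_1, h_2 - T^{[*]} h_1)$ is a bijection of $B_1 \oplus B_2$, each right-hand side is a genuine direct-sum decomposition of the quadratic form, so its strict positivity amounts to simultaneous strict positivity (respectively strict negativity) of each separate summand.

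Reading off the summands of the first decomposition and invoking Lemma~\ref{L:posnegsubspaces}(3) and (4) identifies the resulting pair of conditions as $B_1$ being $J$-positive and $\widehat{\mathbb G}_T$ being $J$-negative, which is precisely condition~(3). Similarly the second decomposition combined with Lemma~\ref{L:posnegsubspaces}(1) and (2) delivers condition~(2). The main technical obstacle is only the bookkeeping of the $J$-adjoint calculus required to verify the two $(I + TT^{[*]})$ and $(I + T^{[*]} T)$ identities above; these ultimately reduce to the definition $T^{[*]} = JT^*J$ together with $J^* = J$ and $J^2 = I$, after which the equivalences follow with no further appeal to Schur complements or reproducing-kernel machinery.
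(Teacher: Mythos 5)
Your proposal is correct and follows the paper's argument in all essentials: both start from the factorization of Lemma~\ref{L:GammaD-factored}, reduce to the $J\oplus J$-Hermitian form induced by the middle matrix $\sbm{I & T \\ T^{[*]} & -I}$ on $\operatorname{Ran} J(\cC_{Z,\sbm{X & -Y}})^{*}\oplus\operatorname{Ran}\cO_{\sbm{V\\ U},W}$, and diagonalize it in two dual ways before invoking Lemma~\ref{L:posnegsubspaces}. Your explicit completion of the square is precisely the quadratic-form reading of the two Schur-complement factorizations $\sbm{I & 0\\ T^{[*]} & I}\sbm{I & 0\\ 0 & -I-T^{[*]}T}\sbm{I & T\\ 0 & I}$ and $\sbm{I & -T\\ 0 & I}\sbm{I+TT^{[*]} & 0\\ 0 & -I}\sbm{I & 0\\ -T^{[*]} & I}$ that the paper writes down (and of the two only parts (1) and (3) of Lemma~\ref{L:posnegsubspaces} are actually needed, since the $J$-sign of the ranges appears directly rather than via parts (2) and (4)).
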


\begin{proof}
    From the factorization \eqref{GammaData-factored} we see that 
    $\bGamma_{\mathfrak D} \succ 0$ if and only if the Hermitian form 
    on the subspace $\sbm{\operatorname{Ran} J (\cC_{Z, \sbm{ X & 
    -Y}})^{*} \\ \operatorname{Ran} \cO_{\sbm{V \\ U}, W} }$
    induced by the operator $\sbm{I & T \\ T^{[*]} & - I}$ in the $J 
    \oplus J$-inner product is positive. On the one hand we may consider the factorization
$$
 \begin{bmatrix} I & T \\ T^{[*]} & -I \end{bmatrix} =
     \begin{bmatrix} I & 0 \\ T^{[*]} & I \end{bmatrix} 
	 \begin{bmatrix} I & 0 \\ 0 & -I - T^{[*]} T \end{bmatrix}
\begin{bmatrix} I & T \\ 0 & I \end{bmatrix}
$$
to deduce that $\sbm{ I & T \\ T^{[*]} & -I  }$ is $(J \oplus 
J)$-positive if and only if 
\begin{enumerate}
\item[(i)] the identity operator $I$ is $J$-positive on 
$\operatorname{Ran} J (\cC_{Z, \sbm{X & -Y}})^{*}$ (i.e., the subspace
$\operatorname{Ran} J (\cC_{Z, \sbm{X & -Y}})^{*}$ is $J$-positive), and 
\item[(ii)]  $-I - T^{[*]}T$ is a $J$-positive operator on 
$\operatorname{Ran} \cO_{\sbm{V \\ U}, W}$, i.e., $\widehat{\mathbb 
G}_{T}$ is a $J$-negative subspace.  
\end{enumerate}
Note that this analysis 
amounts to taking the $J$-symmetrized Schur complement of the matrix 
$\sbm{ I & T \\ T^{[*]} & -I}$ with respect to the (1,1)-entry. This 
establishes the equivalence of (1) and (3).

\smallskip

On the other hand we may take the $J$-symmetrized Schur complement of $\sbm{ I & T \\ T^{[*]} & -I}$
with respect to the (2,2)-entry, corresponding to the factorization
$$
 \begin{bmatrix} I & T \\ T^{[*]} & -I \end{bmatrix} = 
     \begin{bmatrix} I & -T \\ 0 & I \end{bmatrix}
\begin{bmatrix} I + T T^{[*]} & 0 \\ 0 & -I \end{bmatrix}
    \begin{bmatrix} I & 0 \\ -T^{[*]} & I \end{bmatrix}.
$$
In this way we see that $(J \oplus J)$-positivity of $\sbm{ I & T \\ T^{[*]} & -I}$
corresponds to 
\begin{enumerate}
\item[(i$^{\prime}$)] $I + T T^{[*]}$ is a $J$-positive operator (i.e., 
the subspace ${\mathbb G}_{T^{[*]}}$ is $J$-positive), and 
\item[(ii$^{\prime}$)] minus the identity operator $-I$ is $J$ positive on $\operatorname{Ran}  
\cO_{\sbm{ V \\ U}, W}$ (i.e., the subspace is $\operatorname{Ran}  
\cO_{\sbm{ V \\ U}, W}$ is $J$-negative).  
\end{enumerate}
This establishes the  equivalence of (1) and (2).
\end{proof}

To conclude the proof of Corollary \ref{C:BTOA-NP} for the general 
BiTangential case (at least with the Nondegeneracy Assumption in 
place), it remains only to assemble the various pieces. By Lemma 
\ref{L:MperpK} part (1), we see that $\cM_{\mathfrak D}^{[\perp \cK]}$ 
being $J$-positive is equivalent to 
\begin{equation}  \label{condition1}
 {\mathbb G}_{T^{[*]}} \text{ and } (\cM_{\mathfrak D}^{[\perp]})_{1} 
\text{ are $J$-positive subspaces.}
\end{equation}
By Lemma \ref{L:posnegsubspaces}, we see that $ (\cM_{\mathfrak 
D}^{[\perp]})_{1} $ being $J$-positive is equivalent to $\operatorname{Ran} \cO_{\sbm{ V \\ U}, W}$ 
being $J$-negative.  We therefore may amend \eqref{condition1} to
\begin{equation}   \label{condition2}
{\mathbb G}_{T^{[*]}} \text{ is $J$-positive  and }  
P_{H^{2 \perp}_{p+m}(\Pi_+)} \cM_{\mathfrak D}  = \operatorname{Ran} \cO_{\sbm{ V \\ U}, W}
\text{ is $J$-negative}
\end{equation}
which is exactly statement (2) in Lemma \ref{L:GammaDpos}.  Thus (1) 
$\Leftrightarrow$ (2) in Corollary \ref{C:BTOA-NP} follows from (1) 
$\Leftrightarrow$ (2) in Lemma \ref{L:GammaDpos}.

\smallskip

For the general BTOA case, the reproducing kernel space 
$\cH(K_{\Theta,J})$ again can be identified with a range space, namely
\begin{equation}   \label{HKThetaJid}
 \cH(K_{\Theta,J}) = \operatorname{Ran} (P^{J}_{H^{2}_{p+m}(\Pi_{+})} - 
 P^{J}_{\cM_{\mathfrak D}})
\end{equation}
with lifted indefinite inner product, 
where $ P^{J}_{H^{2}_{p+m}(\Pi_{+})}$ and $P^{J}_{\cM_{\mathfrak D}}$ are the 
$J$-orthogonal projections of $L^{2}_{p+m}(i{\mathbb R})$ onto $H^{2}_{p+m}(\Pi_{+})$ and $\cM_{\mathfrak D}$
respectively (see \cite[Theorem 3.3]{BHMesa}). Due to $J$-orthogonal decompositions
\begin{align*}
 &   H^{2}_{p+m}(\Pi_{+}) 
= \operatorname{Ran} J (\cC_{Z, \sbm{ X & -Y}})^{*} \, [+] 
 \cM_{{\mathfrak D}, 1} [+] \, \cM_{{\mathfrak D}, 2}, \\
& \cM = \widehat{\mathbb G}_{T} \, [+]  \cM_{{\mathfrak D}, 1} [+] \, 
\cM_{{\mathfrak D}, 2},
 \end{align*}
 we can simplify the difference of $J$-orthogonal projections to
$$
 P^{J}_{H^{2}_{p+m}} -  P^{J}_{\cM_{\mathfrak D}}
 = P^{J}_{\operatorname{Ran} J (\cC_{Z, \sbm{ X &-Y}})^{*}} - 
 P^{J}_{\widehat {\mathbb G}_{T}}.
 $$
 By a calculation as in the proof for Case 1, one can show that 
 \begin{equation}   \label{HKThetaJ}
 \cH(K_{\Theta,J}) = ( \operatorname{Ran} J (\cC_{Z, \sbm{X & 
 -Y}})^{*})_{J} \, [+] (\widehat {\mathbb G}_{T})_{-J}
 \end{equation}
 with the identity map a Kre\u{\i}n-space isomorphism, 
 where the subscripts on the right hand side indicating that one should use the $J$-inner 
 product for the first component but the $-J$-inner product for the 
 second component. We conclude that $\cH(K_{\Theta, J})$ is a HIlbert 
 space exactly when condition (3) in Lemma \ref{L:GammaDpos} holds. We now see that 
 (1) $\Leftrightarrow$ (3) in Corollary \ref{C:BTOA-NP} is an immediate consequence of (1) 
 $\Leftrightarrow$ (3) in Lemma \ref{L:GammaDpos}.
 \end{proof}
 
The above analysis actually establishes a bit more which we collect 
in the following Corollary.

\begin{corollary}  \label{C:BTOA-NP'}  The following conditions are 
    equivalent:
\begin{enumerate}
    \item The subspace $\cM_{\mathfrak D}^{[\perp \cK]}$ is $J$-positive.
    
    \item The subspace $(\cM_{\mathfrak D}^{[ \perp ]})^{[\perp \cK']}$ is 
    $J$-negative.
 \end{enumerate}
\end{corollary}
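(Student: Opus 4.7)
The plan is to deduce the equivalence (1) $\Leftrightarrow$ (2) by showing that each of the two conditions is independently equivalent to the strict positivity $\bGamma_{\mathfrak D} \succ 0$ of the Pick matrix. The direction (1) $\Leftrightarrow$ $\bGamma_{\mathfrak D} \succ 0$ has already been extracted in the proof of Corollary \ref{C:BTOA-NP}: one chains Lemma \ref{L:MperpK}(1) (which reduces $J$-positivity of $\cM_{\mathfrak D}^{[\perp\cK]}$ to $J$-positivity of the core subspace $(\cM_{\mathfrak D}^{[\perp\cK]})_{0} = {\mathbb G}_{T^{[*]}} [+] (\cM_{\mathfrak D}^{[\perp]})_{1}$) with Lemma \ref{L:posnegsubspaces} parts (1) and (2) (which translate the $J$-positivity of the two summands into the conjunction ``${\mathbb G}_{T^{[*]}}$ is $J$-positive and $\operatorname{Ran}\cO_{\sbm{V \\ U},W}$ is $J$-negative''), arriving at exactly condition (2) of Lemma \ref{L:GammaDpos}.

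For the new content, (2) $\Leftrightarrow$ $\bGamma_{\mathfrak D} \succ 0$, I would run the dual chain. First, Lemma \ref{L:MperpK}(2) reduces $J$-negativity of $(\cM_{\mathfrak D}^{[\perp]})^{[\perp \cK']}$ to $J$-negativity of the core subspace $((\cM_{\mathfrak D}^{[\perp]})^{[\perp \cK']})_{0} = \widehat{\mathbb G}_{T} [+] \cM_{{\mathfrak D},1}$. Then Lemma \ref{L:posnegsubspaces} parts (3) and (4) trade these in for the conjunction ``$\widehat{\mathbb G}_{T}$ is $J$-negative and $\operatorname{Ran} J(\cC_{Z,\sbm{X & -Y}})^{*}$ is $J$-positive'', which is precisely condition (3) of Lemma \ref{L:GammaDpos}, hence again equivalent to $\bGamma_{\mathfrak D} \succ 0$. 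Combining the two chains yields (1) $\Leftrightarrow$ $\bGamma_{\mathfrak D} \succ 0$ $\Leftrightarrow$ (2).

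I do not expect a genuinely new obstacle here, since the heart of the argument---the factorization \eqref{GammaData-factored} together with the two $J$-symmetrized Schur complements of $\sbm{I & T \\ T^{[*]} & -I}$ (one pivoting on the $(2,2)$-entry, the other on the $(1,1)$-entry)---is already carried out inside Lemma \ref{L:GammaDpos}. Corollary \ref{C:BTOA-NP'} is simply the geometric counterpart of these two pivots, condition (1) reflecting the $(2,2)$-pivot and condition (2) reflecting the $(1,1)$-pivot. The only remaining work is therefore bookkeeping: matching the core subspaces of Lemma \ref{L:MperpK} with the operator gramians, via the graph and range identifications assembled in Lemma \ref{L:decom}.
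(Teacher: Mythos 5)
Your proposal is correct and follows essentially the same chain as the paper's proof: Lemma \ref{L:MperpK}(2) to reduce to the core subspace $\widehat{\mathbb G}_{T} [+] \cM_{{\mathfrak D},1}$, then Lemma \ref{L:posnegsubspaces}(4) to convert to condition (3) of Lemma \ref{L:GammaDpos}, and finally the equivalences (1) $\Leftrightarrow$ (3) and (1) $\Leftrightarrow$ (2) of Lemma \ref{L:GammaDpos} to close the loop. One cosmetic remark: you cite Lemma \ref{L:posnegsubspaces} parts (3) \emph{and} (4), but only part (4) is actually used to reach the stated conjunction ``$\widehat{\mathbb G}_{T}$ is $J$-negative and $\operatorname{Ran} J(\cC_{Z,\sbm{X & -Y}})^{*}$ is $J$-positive''; part (3) would have replaced the first clause by a statement about $I+T^{[*]}T$, which you do not need here.
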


\begin{proof}
    We have seen in Lemma \ref{L:MperpK} part (2) that 
    $(\cM_{\mathfrak D}^{[\perp]})^{[ \perp \cK']}$ being $J$-negative is 
    equivalent to
\begin{equation}   \label{condition1'}
   \widehat{\mathfrak G}_{T} \text{ and } (\cM_{\mathfrak D}^{[\perp]})_{1} \text{ 
   are $J$-negative subspaces.}
\end{equation}
Lemma \ref{L:posnegsubspaces} (4) tells us that $ \cM_{{\mathfrak D},1}$ 
being $J$-negative is equivalent to \\
$\operatorname{Ran} J (\cC_{Z, \sbm{ X & -Y}})^{*}$  
being $J$-positive.  Thus condition \eqref{condition1'} can be amended to
\begin{equation}   \label{condition2'}
 \widehat{\mathbb G}_{T} \text{ is negative and }
 P_{H^2_{p+m}(\Pi_+)} \cM_{\mathfrak D}^{[ \perp]} = 
\operatorname{Ran} J (\cC_{Z, \sbm{ X & -Y}})^{*} \text{ is $J$-positive.}
\end{equation}
We next use the equivalence of (1) $\Leftrightarrow$ (3) in Theorem 
\ref{L:GammaDpos} to see that condition \eqref{condition2'} is also 
equivalent to $\bGamma_{\mathfrak D} \succ 0$.  We then use the 
equivalence (1) $\Leftrightarrow$ (2) in Theorem \ref{L:GammaDpos} 
to see that this last condition in turn is equivalent 
to $\cM_{\mathfrak D}^{[ \perp \cK]}$ being $J$-positive.
\end{proof}

\section{Interpolation problems in the generalized Schur class}  \label{S:negsquares}
Much of the previous analysis extends from the Schur class  $\cS^{p \times m}(\Pi_{+})$ 
to a larger class  $\cS_\kappa^{p \times m}(\Pi_{+})$ (generalized Schur class)
consisting of $\mathbb C^{p\times m}$-valued
functions that are meromorphic on $\Pi_+$ with total pole multiplicity equal $\kappa$ 
and such that their $L^\infty$ norm (that is, $\sup_{y\in\mathbb R}\|S(iy)\|$) does 
not exceed one. The values 
$S(iy)$ are understood in the sense of non-tangential boundary limits that exist for 
almost all 
$y\in\mathbb R$. The multiplicity of a pole $z_0$ for a matrix-valued function $S$ is 
defined as the sum of absolute values of all negative partial multiplicities 
appearing in the Smith form of $S$
at $z_0$ (see e.g.\ \cite[Theorem 3.1.1]{BGR}). Then the total pole multiplicity of $S$ 
is defined as the sum of multiplicities of all poles.  Let us 
introduce the notation 
$$
m_{P}(S) = \text{ sum of all pole multiplicities of $S$ over all poles in } \Pi_{+}.
$$
It follows by the maximum modulus principle that  $\cS^{p \times m}_0(\Pi_{+})$ 
is just the classical Schur class. Generalized Schur functions appeared first 
in \cite{T} in the interpolation context and 
were comprehensively studied by Kre\u{\i}n and Langer in \cite{kl,kl1}. Later
work on the classes $\cS_{\kappa}$ include \cite{dls1}, \cite{J},
\cite{CG1}, and \cite{ADRS}, as well as \cite{Gol}, \cite{nud}, \cite{BH}, 
\cite{BH86} and the book \cite{BGR} in the context of interpolation.

\smallskip

The class $\cS_{\kappa}(\Pi_{+})$ can alternatively be characterized 
by any of the following conditions:
\begin{enumerate}
    \item  ${\rm sq}_{-}(K_{S}) = \kappa$ where the kernel $K_{S}$ is 
    given by \eqref{dBRkerPi+}.
    
    \item ${\rm sq}_{-}({\mathbf K}_{S}) = \kappa$, where ${\mathbf 
    K}_{S}$ is the $2 \times 2$-block matrix kernel \eqref{ker22}.
    
    \item $S$ admits left and right 
(coprime) {\em Kre\u{\i}n-Langer factorizations}
$$
F(\lam)=S_R(\lam)\vartheta_R(\lam)^{-1}= \vartheta_L(\lam)^{-1}S_L(\lam),
$$
where $S_L, \, S_R\in\cS^{p\times m}(\Pi_+)$ and $\vartheta_L$ and $\vartheta_R$ are matrix-valued finite
Blaschke products of degree $\kappa$ (see \cite{kl1} for the scalar-valued case and 
\cite{dls1} for the Hilbert-space operator-valued case).
By a $\mathbb C^{n\times n}$-valued finite Blaschke product we mean the product of $\kappa$ Blaschke 
(or Blaschke-Potapov) factors 
$$
I_n-P+\frac{\lam-\alpha}{\lam+\overline{\alpha}}P
$$
where $\alpha\in\Pi_+$ and $P$ is an orthogonal projection in $\mathbb C^n$. 
\end{enumerate}
There is also an intrinsic characterization of matrix triples 
$(C,A,B)$ which can arise as the pole triple over the unit disk for a 
generalized Schur class function---see \cite{bolleiba} for details.

\smallskip

Let us take another look at the  BiTangential Nevanlinna-Pick problem 
\eqref{inter1'}--\eqref{inter3'}.
If the Pick matrix \eqref{Pick-simple} is not positive semidefinite, 
the problem has no solutions in the Schur class $\cS^{p\times n}(\Pi_+)$, 
by Theorem \ref{T:BT-NP}. However, there always exist generalized Schur 
functions that are analytic at all interpolation nodes $z_i,w_j$ and satisfy 
interpolation conditions \eqref{inter1'}--\eqref{inter3'}. One can show that 
there exist such functions with only one pole of a sufficiently high multiplicity 
at any preassigned point in $\Pi_+$. The question of interest is
to find the smallest integer $\kappa$, for which interpolation conditions  
\eqref{inter1'}--\eqref{inter3'} 
are met for some function $S\in \cS_\kappa^{p \times m}(\Pi_{+})$ and then 
to describe the set of all such functions. 

\smallskip

The same question makes sense in the more general setting of the {\bf BTOA-NP} 
interpolation problem:
{\em given a $\Pi_{+}$-admissible BTOA interpolation data set \eqref{data},
find the  smallest integer $\kappa$, for which interpolation conditions  
\eqref{BTOAint1}--\eqref{BTOAint3} are satisfied for some function 
$S\in \cS_\kappa^{p \times m}(\Pi_{+})$ which is analytic on 
$\sigma(Z)\cup\sigma(W)$, and  describe the set of all such functions}. 

\smallskip

The next theorem gives the answer to the question above in the so-called nondegenerate case.

\begin{theorem}  \label{T:BTOA-NPkap}  Suppose that ${\mathfrak D}  = ( X,Y,Z; U,V, W; \Gamma)$
 is a $\Pi_{+}$-admissible BTOA interpolation data set and let us 
 assume that the BTOA-Pick matrix 
$\bGamma_{\mathfrak D}$ defined by \eqref{GammaData} is invertible. 
Let $\kappa$ be the  smallest 
integer for which there is a function $S\in \cS_\kappa^{p \times m}(\Pi_{+})$ which is analytic on 
$\sigma(W) \cup \sigma(Z)$ and satisfies the interpolation conditions \eqref{BTOAint1}--\eqref{BTOAint3}.
Then $\kappa$ is given by any one of the following three equivalent 
formulas:
\begin{enumerate}
    \item $\kappa=\nu_-(\bGamma_{\mathfrak D})$, the number of negative 
    eigenvalues of $\bGamma_{\mathfrak D}$.
    \item $\kappa = \nu_{-}(\cM_{\mathfrak D}^{[\perp] \cK})$, the negative signature 
   of the Kre\u{\i}n-space $\cM_{\mathfrak D}^{[\perp] \cK}$ 
    in the $J$-inner product.
    
    \item $\kappa =  \nu_{-}(\cH(K_{\Theta, J}))$,  the negative signature  of the 
reproducing kernel Pontryagin space $\cH(K_{\Theta,J})$, where $\Theta$ is defined 
as in \eqref{sep1} and $K_{\Theta,J}$ as in \eqref{ad1}.
\end{enumerate}
Furthermore, the function $S$ belongs to the generalized Schur class 
$\cS_\kappa^{p \times m}(\Pi_{+})$ and satisfies the interpolation conditions 
\eqref{BTOAint1}--\eqref{BTOAint3}
if and only if it is of the form
\begin{equation}   \label{LFTparamkap}
S(\lambda) = (\Theta_{11}(\lam) G(\lam) + \Theta_{12}(\lam))
( \Theta_{21}(\lam) G(\lam) + \Theta_{22}(\lam))^{-1}
\end{equation}
for a Schur class function  $G\in \cS^{p \times m}(\Pi_{+})$ such that
\begin{equation}   \label{LFTparampar}
\det (\psi(\lam)(\Theta_{21}(\lam) G(\lam) + \Theta_{22}(\lam)))\neq 0,\quad \lam\in \Pi_{+}
\backslash(\sigma(Z)\cup\sigma(W))
\end{equation}
where $\psi$ is the $m \times m$-matrix function defined in 
\eqref{sep1}.
\end{theorem}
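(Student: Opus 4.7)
My plan is to reduce the generalized Schur problem to the classical ($\kappa=0$) case already handled by Theorem \ref{T:BTOA-NP}, using the Kre\u{\i}n-Langer factorization $S = \vartheta_L^{-1} S_L$ with $S_L \in \cS^{p \times m}(\Pi_+)$ and $\vartheta_L$ a matrix Blaschke product of McMillan degree $m_P(S)$, together with a systematic signature-count through the geometric identifications from Section \ref{S:synthesis}. The matrix $\Theta$ defined by \eqref{Theta} is still available because $\bGamma_{\mathfrak D}$ is invertible, it is still $J$-unitary on $i {\mathbb R}$ by \eqref{ad1}, and by the Kolmogorov-decomposition argument preceding Theorem \ref{T:RKHS} it now lies in the generalized $J$-Schur class $\cS_{J,\kappa_0}(\Pi_+)$ with $\kappa_0 = \nu_-(\bGamma_{\mathfrak D})$.

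For the equivalence of the three formulas for $\kappa$, I would upgrade Corollary \ref{C:BTOA-NP} from a positivity statement to a signature identity. The factorization \eqref{GammaData-factored} and both $J$-symmetrized Schur complement decompositions used in the proof of Lemma \ref{L:GammaDpos} are inertia-preserving, and combined with the $J$-orthogonal decompositions \eqref{MperpK} and \eqref{HKThetaJ} (whose ``extra'' summands $\cM_{{\mathfrak D},1}$, $(\cM_{\mathfrak D}^{[\perp]})_1$ and the kernels of the observer/control operators are $J$-definite and already accounted for) they yield
\[
\nu_-(\bGamma_{\mathfrak D}) \;=\; \nu_-(\cM_{\mathfrak D}^{[\perp \cK]}) \;=\; \nu_-(\cH(K_{\Theta,J})).
\]
This handles (1) $\Leftrightarrow$ (2) $\Leftrightarrow$ (3), modulo identifying each of these integers with the minimal pole multiplicity. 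For the parametrization I would argue sufficiency and necessity separately. \emph{Sufficiency}: for $G \in \cS^{p \times m}(\Pi_+)$ satisfying \eqref{LFTparampar}, the function $S$ given by \eqref{LFTparamkap} meets the interpolation conditions by exactly the calculation \eqref{ad3} from Section \ref{S:proof1} (which used only that $\sbm{S \\ I} \psi^{-1}$ lies in $\Theta \cdot H^2_{p+m}$, a property surviving the loss of positivity), its boundary values are contractive by the $J$-unitarity of $\Theta$ on $i {\mathbb R}$ as in the chain of inequalities following \eqref{Sparam}, and a Smith--McMillan/winding-number count based on the identity $\Theta_{22} H^2_m = \psi^{-1} H^2_m$ from \eqref{Theta22-3} shows $m_P(S) = \kappa_0$ precisely because \eqref{LFTparampar} rules out any extra zeros of $\det(\Theta_{21}G + \Theta_{22})$ in $\Pi_+$ beyond those forced by $\psi$. \emph{Necessity}: given $S \in \cS_\kappa^{p \times m}(\Pi_+)$ with $S$ analytic on $\sigma(Z) \cup \sigma(W)$ solving \eqref{BTOAint1}--\eqref{BTOAint3}, factor $S = \vartheta_L^{-1} S_L$ in Kre\u{\i}n-Langer form and examine the graph subspace $\sbm{S_L \\ \vartheta_L} \psi^{-1} H^2_m(\Pi_+)$; this subspace sits inside $\cM_{\mathfrak D}$ and is maximal negative in $\cK$ modulo a $\kappa$-dimensional positive piece coming from the $\kappa$ Blaschke zeros of $\vartheta_L$. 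The generalized-Schur analogue of Lemma \ref{L:maxneg} then yields $m_P(S) \ge \nu_-(\cM_{\mathfrak D}^{[\perp \cK]}) = \kappa_0$, so $\kappa \ge \kappa_0$, and the corresponding classical parameter $G$ is recovered by undoing the LFT via $\Theta$.

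The main obstacle will be the pole-multiplicity bookkeeping: making precise the correspondence between the $\kappa_0$ negative squares of $\bGamma_{\mathfrak D}$ and the $\kappa_0$ Smith-McMillan poles of $S$ in $\Pi_+$. Concretely, the key technical lemma is that, given $G \in \cS^{p \times m}(\Pi_+)$, the function $\Theta_{22} + \Theta_{21} G = \Theta_{22}(I + \Theta_{22}^{-1} \Theta_{21} G)$ has exactly the $\kappa_0$ zeros of $\det \psi$ in $\Pi_+$ (no more, no less) if and only if \eqref{LFTparampar} holds; the ``no more'' direction uses $\|\Theta_{22}^{-1} \Theta_{21}\| < 1$ on $\overline{\Pi}_+$ (from \eqref{Theta22invTheta21}) exactly as in the homotopy/winding-number step \eqref{wno}--\eqref{wno''}, now carried out on the complement of the additional $\kappa_0$ zeros contributed by $\psi$. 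Once this bookkeeping is in place, the minimality of $\kappa_0$ follows because any $G \in \cS^{p \times m}(\Pi_+)$ satisfying \eqref{LFTparampar} produces an $S$ with $m_P(S) = \kappa_0$, and conversely every $S \in \cS_\kappa$ solution with $\kappa < \kappa_0$ would violate the signature inequality above.
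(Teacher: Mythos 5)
Your proposal has the right skeleton --- the signature identity via \eqref{GammaData-factored} and the Schur-complement argument in Lemma \ref{L:GammaDpos} (this is precisely what Section \ref{S:synthesiskap} does), and the Grassmannian necessity argument via Lemma \ref{L:maxnegkap} --- but the sufficiency / parametrization step contains a genuine gap. You invoke the estimate $\|\Theta_{22}^{-1}\Theta_{21}\|<1$ on $\overline{\Pi}_+$ from \eqref{Theta22invTheta21} and the subspace identity $\Theta_{22}H^2_m = \psi^{-1}H^2_m$ from \eqref{Theta22-3}. Both were derived under $\bGamma_{\mathfrak D}\succ 0$, and both fail once $\bGamma_{\mathfrak D}$ is indefinite: the pointwise $J$-contractivity of $\Theta(\lam)$ on $\Pi_+$ is lost, so $\Theta_{22}^{-1}\Theta_{21}$ is merely a generalized Schur-class function (contractive only for $\lam \in i\mathbb R$, with $\kappa$ poles inside $\Pi_+$), which breaks the inclusion $\begin{bmatrix}\Theta_{21}&\Theta_{22}\end{bmatrix}H^2_{p+m}\subset \Theta_{22}H^2_m$ used to get \eqref{Theta22-3}. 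As a result the winding-number bookkeeping is not the classical $\operatorname{wno}\det\psi^{-1}=\operatorname{wno}\det\Theta_{22}$ as in \eqref{wno''} but the shifted identity $\operatorname{wno}\det\Theta_{22}+\operatorname{wno}\det\psi=\kappa$ (\eqref{toshow2}); establishing it requires the Potapov--Ginsburg/Redheffer transform, the pole-count identity $m_P(U_{22})=m_P(U)=\kappa$ obtained from the observability/controllability translations \eqref{obs-imp1}--\eqref{control-imp2}, and the Argument-Principle calculation \eqref{know1}--\eqref{know2}. Your ``carry out the homotopy on the complement of the $\kappa_0$ zeros of $\psi$'' does not substitute for this; there is no region where the classical contractivity is available.

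Two further points. First, your necessity step writes the graph space as $\sbm{S_L\\\vartheta_L}\psi^{-1}H^2_m(\Pi_+)$ with $S=\vartheta_L^{-1}S_L$ a \emph{left} Kre\u{\i}n--Langer factorization; since $\vartheta_L$ is $p\times p$, the block column $\sbm{S_L\\\vartheta_L}$ has size $2p\times m$ and does not fit the ambient space $L^2_{p+m}$. You need the \emph{right} factorization $S=S_R\vartheta_R^{-1}$ so that $\sbm{S\\I_m}\vartheta_R = \sbm{S_R\\\vartheta_R}$ is $(p+m)\times m$, and even then the containment in $\cM_{\mathfrak D}$ is not automatic: the criterion \eqref{sol-rep'} posits some $\Psi$ with $\kappa$ zeros off $\sigma(Z)\cup\sigma(W)$ satisfying $\sbm{S\\I_m}\psi^{-1}\Psi H^2_m\subset\cM_{\mathfrak D}$, and $\vartheta_R\psi^{-1}$ need not equal $\psi^{-1}\Psi$ for any $\Psi$ analytic on $\Pi_+$ (the conjugate $\psi\vartheta_R\psi^{-1}$ picks up poles at $\sigma(W)$). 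The paper works instead with an abstract $\Psi$ supplied by \eqref{sol-rep'} and with the Takagi--Sarason representation $S=K+\varphi\Upsilon\psi$ rather than Kre\u{\i}n--Langer factorization. Second, the paper applies the Grassmannian Lemma \ref{L:maxnegkap} exactly as you propose for the $\ge$ direction, so that part is essentially the published argument.
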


\subsection{The state-space approach}  \label{S:statekap}
The direct proof of the necessity of condition (1) in Theorem 
\ref{T:BTOA-NPkap} for the existence of class-$\cS_{\kappa}^{p \times 
m}(\Pi_{+})$ solution of the interpolation  conditions 
\eqref{BTOAint1}--\eqref{BTOAint3} relies on the
characterization of the class $\cS_\kappa^{p \times m}(\Pi_{+})$ in 
terms of the kernel \eqref{ker22} mentioned above: {\em a 
$\mathbb C^{p\times n}$-valued function meromorphic on $\Pi_+$ belongs to $\cS_\kappa^{p \times m}(\Pi_{+})$
if and only if the kernel ${\mathbf K}_{S}(\lam, \lam_{*}; \zeta, \zeta_{*})$ defined  as in \eqref{ker22}
has $\kappa$ negative squares on $\Omega_S^4$}:
\begin{equation}
{\rm sq}_-{\mathbf K}_{S}=\kappa,
\label{aug3}
\end{equation}
where $\Omega_S\subset\Pi_+$ is the domain of analyticity of $S$. The latter equality means that the 
block matrix $\left[{\mathbf K}_{S}(z_i, z_i; z_j, z_j)\right]_{i,j=1}^N$ has at most $\kappa$ 
negative eigenvalues for any 
choice of finitely many points $z_1.\ldots,z_N\in\Omega_S$, and it has exactly $\kappa$ negative eigenvalues 
for at least one such choice.

\smallskip
Now suppose that $S \in \cS_{\kappa}^{p \times m}(\Pi_{+})$ satisfies 
the interpolation conditions \eqref{BTOAint1}--\eqref{BTOAint3}.
The kernel ${\mathbf K}_{S}$ satisfying condition \eqref{aug3} still admits the Kolmogorov decomposition
\eqref{KS2}, but this time the state space $\cX$ is a Pontryagin space of negative index $\kappa$.
All computations following formula \eqref{KS2} go through with $\Pi_+$ replaced by $\Omega_S$ showing 
that the matrix  $\bGamma'_{\mathfrak D}$ defined in \eqref{aug4} is equal to the Pick matrix 
$\bGamma_{\mathfrak D}$ given in \eqref{GammaData}. Note that the operations bringing the 
kernel ${\mathbf K}_{S}$ to the matrix $\bGamma_{\mathfrak D}$ amount 
to a sophisticated conjugation of the kernel ${\mathbf K}_{S}$.  We 
conclude that  $\nu_-(\bGamma_{\mathfrak D})=\nu_-(\bGamma'_{\mathfrak D}) \le \kappa$.
Once one of the sufficiency arguments has been carried out (by whatever 
method) to show that $\nu_-(\bGamma_{\mathfrak 
D})=\nu_-(\bGamma'_{\mathfrak D}) < \kappa$ implies that there is a 
a function $S$ in a generalized Schur class $\cS^{p \times 
m}_{\kappa'}(\Pi_{+})$ with $\kappa' < \kappa$ satisfying the 
interpolation conditions, then $\nu_{-}(\bGamma_{\mathfrak D}) < 
\kappa$ leads to a contradiction to 
the minimality property of $\kappa$.   We conclude 
that $\nu_{-}(\bGamma_{\mathfrak D}) = \kappa$ is necessary for 
$\kappa$ to be the smallest integer so that there is a solution $S$ 
of class $\cS_{\kappa}^{p \times m}(\Pi_{+})$ of the interpolation 
conditions \eqref{BTOAint1}--\eqref{BTOAint3}. 

\smallskip

We now suppose that $\nu_-(\bGamma_{\mathfrak D})=\kappa$. The identity \eqref{ad1} relies on 
equality \eqref{bigLySyl} and on the assumption that $\bGamma_{\mathfrak D}$ 
is invertible. In particular, the matrix $\Theta(\lam)$ still is $J$-unitary for each 
$\lam \in i \mathbb R$, i.e., equalities \eqref{ThetaJunitary} hold for all $\lam\in i\mathbb R$.
By using the 
controllability/observability assumptions on $(Z,X)$ and $(U,W)$, it 
follows from the formula on the right hand side of \eqref{ad1} that the kernel 
$K_{\Theta,J}$ \eqref{ad1} has $\kappa$ negative squares on
$\Omega_\Theta = \Pi_{+} \setminus \sigma(W)$ (the points of 
analyticity for $\Theta$ in the right half plane $\Pi_{+}$):
$$
 {\rm sq}_{-} K_{\Theta,J} = \kappa.
$$

\smallskip

We shall have need of the {\em Potapov-Ginsburg transform} $U = \sbm{ 
U_{11} & U_{12} \\ U_{21} & U_{22}}$ of 
a given block $2 \times 2$-block matrix function $\Theta = \sbm{ 
\Theta_{11} & \Theta_{21} \\ \Theta_{21} & \theta_{22}} $(called  
the {\em Redheffer transform} in \cite{BGR}) defined by
$$
  U = \begin{bmatrix} U_{11} & U_{12} \\ U_{21} & 
  U_{22} \end{bmatrix} : = \begin{bmatrix} \Theta_{12} 
  \Theta_{22}^{-1} & \Theta_{11} - \Theta_{12} \Theta_{22}^{-1} 
  \Theta_{21} \\ \Theta_{22}^{-1} & - \Theta_{22}^{-1} \Theta_{21} 
  \end{bmatrix}.
$$
 This transform is the result of rearranging the inputs and outputs in the system of 
 equations
\begin{equation}  \label{chain}
\begin{bmatrix} \Theta_{11} & \Theta_{12} \\  \Theta_{21} & \Theta_{22} \end{bmatrix} 
\begin{bmatrix} x_{2} \\ y_{2} \end{bmatrix} = \begin{bmatrix} y_{1} \\ x_{1} 
  \end{bmatrix}
\end{equation}
to have the form
\begin{equation}  \label{scattering}
  \begin{bmatrix} U_{11} & U_{12} \\ U_{21} & U_{22} \end{bmatrix} 
      \begin{bmatrix} x_{1} \\ x_{2} \end{bmatrix} = \begin{bmatrix} 
	  y_{1} \\ y_{2} \end{bmatrix},
\end{equation}
and in circuit theory has the interpretation as the change of 
variable from the {\em chain formalism} \eqref{chain} to the {\em scattering 
formalism} \eqref{scattering}.  Based on this connection it is not hard to show that
$$
 {\rm sq}_{-} K_{U} = {\rm sq}_{-} K_{\Theta,J} = \kappa
$$
where the notation $K_{U}$  is as in \eqref{dBRkerPi+} and 
$K_{\Theta,J}$ as in \eqref{ad1}
(see \cite[Theorem 13.1.3]{BGR}).
We conclude that $U$ is in the generalized Schur class $\cS_{\kappa}^{(p+m) \times (m+p)}(\Pi_{+})$.  
By the Kre\u{\i}n-Langer factorization result for the generalized Schur 
class (see \cite{kl1}), it follows that $\kappa$ is also equal to the 
total pole multiplicity of $U$ over points in $\Pi_{+}$:
$$
m_{P}(U) = \kappa.
$$
\smallskip

We would like to show next that
\begin{equation}   \label{toshow1}
    m_{P}(U_{22}) = m_{P}(\Theta_{22}^{-1} \Theta_{21}) = \kappa.
\end{equation}
Verification of this formula will take several steps and 
follow the analysis in \cite[Chapter 13]{BGR}.
We first note that the calculations \eqref{cMrep}--\eqref{cMBLrep} 
go through unchanged so we still have the Beurling-Lax representation
\begin{equation}  \label{BLrep}
  \cM_{\mathfrak D} = \Theta \cdot H^{2}_{p+m}(\Pi_{+})
\end{equation}
where $\cM_{\mathfrak D}$ also has the representation \eqref{cMrep}.
The observability assumption on the output pair $(U,W)$ translates to 
an additional structural property on $\cM_{\mathfrak D}$:
\begin{itemize}
    \item $(U,W)$ {\em observable implies}
\begin{equation}  \label{obs-imp1}
    \cM_{\mathfrak D} \cap \begin{bmatrix} L^{2}(i {\mathbb R}) \\ 0 
    \end{bmatrix} = \cM_{\mathfrak D} \cap \begin{bmatrix} 
    H^{2}_{p}(\Pi_{+}) \\ 0 \end{bmatrix}.
\end{equation}
\end{itemize}
Making use of \eqref{BLrep}, condition \eqref{obs-imp1} translates to 
an explicit property of $\Theta$, namely:
$$
f \in H^{2}_{p}(\Pi_{+}),\, g \in H^{2}_{m}(\Pi_{+}),\, \Theta_{21}f 
+ \Theta_{22}g = 0 \Rightarrow \Theta_{11} f + \Theta_{12}g \in 
H^{2}_{p}(\Pi_{+}).
$$
Solving the first equation for $g$ gives $g = -\Theta_{22}^{-1} 
\Theta_{21} f$ and this last condition can be rewritten as
$$
f \in H^{2}_{p}(\Pi_{+}), \, \Theta_{22}^{-1} 
\Theta_{21} f \in H^{2}_{m}(\Pi_{+}) \Rightarrow
(\Theta_{11} - \Theta_{12} \Theta_{22}^{-1} \Theta_{21}) f \in 
H^{2}_{p}(\Pi_{+}),
$$
or, more succinctly,
$$
 f \in H^{2}_{p}(\Pi_{+}),\, U_{22} f \in H^{2}_{m}(\Pi_{+}) 
 \Rightarrow U_{12} f \in H^{2}_{m}(\Pi_{+}).
$$
This last condition translates to
\begin{equation}  \label{obs-imp2}
    m_{P}(U_{22}) = m_{P}\left( \sbm{ U_{12} \\ U_{22} } \right).
\end{equation}

Similarly, the controllability assumption on the input pair $(Z,X)$ 
translates to an additional structural property on $\cM_{\mathfrak 
D}$, namely:
\begin{itemize}
    \item $(Z,X)$ {\em controllable implies}
\begin{equation}   \label{control-imp1}
    P_{\sbm{ 0 \\ H^{2}_{m}(\Pi_{+})}} \left( \cM_{\mathfrak D} \cap 
    H^{2}_{p+m}(\Pi_{+}) \right) = \begin{bmatrix} 0 \\ H^{2}_{m}(\Pi_{+}) 
    \end{bmatrix}.
\end{equation}
\end{itemize}
In terms of $\Theta$, from the representation \eqref{BLrep} we see 
that this means that, given any $h \in H^{2}_{m}(\Pi_{+})$, we can 
find $f \in H^{2}_{p}(\Pi_{+})$ and $g \in H^{2}_{m}(\Pi_{+})$ so that
$$
\Theta_{11} f + \Theta_{12} g \in H^{2}_{p}(\Pi_{+}), \quad
\Theta_{21} f + \Theta_{22} g = h.
$$
We can solve the second equation for $g$
$$
  g = \Theta_{22}^{-1} h - \Theta_{22}^{-1} \Theta_{21} f \in 
  H^{2}_{m}(\Pi_{+})
$$
and rewrite the first expression in terms of  $f$ and $h$:
$$
 (\Theta_{11} - \Theta_{12} \Theta_{22}^{-1} \Theta_{21}) f + 
 \Theta_{12} \Theta_{22}^{-1} h \in H^{2}_{p}(\Pi_{+}).
$$
Putting the pieces together, we see that an equivalent form of 
condition \eqref{control-imp1} is: {\em for any $ h \in H^{2}_{m}(\Pi_{+})$, there exists 
an $f \in H^{2}_{p}(\Pi_{+})$ such that
$$
\Theta_{22}^{-1} h - \Theta_{22}^{-1} \Theta_{21} f \in 
H^{2}_{m}(\Pi_{+}), \quad  (\Theta_{11} -  \Theta_{12} \Theta_{22}^{-1} 
\Theta_{21}) f + \Theta_{12} \Theta_{22}^{-1} h \in 
H^{2}_{p}(\Pi_{+}).
$$}
More succinctly,
\begin{align*}
& h \in H^{2}_{m}(\Pi_{+}) \Rightarrow \exists f \in 
 H^{2}_{p}(\Pi_{+}) \text{ so that } \\
& U_{21} h + U_{22} f \in H^{2}_{m}(\Pi_{+}), \quad
U_{12} f + U_{11} h \in H^{2}_{p}(\Pi_{+}),
\end{align*}
or, in column form, for each $h \in H^{2}_{m}(\Pi_{+})$ there exists 
$f \in H^{2}_{p}(\Pi_{+})$ so that
$$
 \begin{bmatrix} U_{11} \\ U_{21} \end{bmatrix} h + \begin{bmatrix} 
     U_{12} \\ U_{22} \end{bmatrix} f \in H^{2}_{p+m}(\Pi_{+}).
$$
The meaning of this last condition is:
\begin{equation}   \label{control-imp2}
    m_{P}(U) = m_{P}\left(\sbm{ U_{12} \\ U_{22}}  \right).
    \end{equation}
Combining \eqref{obs-imp2} with \eqref{control-imp2} gives us 
\eqref{toshow1} as wanted.

\smallskip
Since $\Theta$ is not $J$-contractive in $\Pi_+$ anymore,
we cannot conclude that $\Theta_{22}^{-1}\Theta_{21}$ is contraction valued. However, 
due to equalities \eqref{ThetaJunitary}, the function $\Theta_{22}(\lam)^{-1}\Theta_{21}(\lam)$
is a contraction for each $\lam\in i\mathbb R$.
Therefore, $\Theta_{22}^{-1}\Theta_{21}$ belongs to the generalized Schur class 
$\mathcal S^{m\times p}_\kappa(\Pi_+)$. We next wish to argue that
\begin{equation}  \label{toshow2}
{\rm wno} \det \Theta_{22}+{\rm wno} \det \psi=\kappa,
\end{equation}
where $\psi$ is given by \eqref{sep1}.
From the representation \eqref{BLrep} and the form of $\cM_{\mathfrak 
D}$ in \eqref{BLrep} we see that
$$
\Theta_{22} \begin{bmatrix} \Theta_{22}^{-1} \Theta_{21} & I_{m}
\end{bmatrix} H^{2}_{p+m}(\Pi_{+}) = \begin{bmatrix} \Theta_{21} & 
\Theta_{22} \end{bmatrix} H^{2}_{p+m}(\Pi_{+}) = \psi^{-1} 
H^{2}_{m}(\Pi_{+}).
$$
We rewrite this equality as
\begin{equation}   \label{subspace-eq}
  \begin{bmatrix} \Theta_{22}^{-1} \Theta_{21} & I_{m} \end{bmatrix} 
      H^{2}_{p+m}(\Pi_{+}) = \Theta_{22}^{-1} \psi^{-1} 
      H^{2}_{m}(\Pi_{+}).
\end{equation}
In particular,
$$
  \Theta_{22}^{-1} \psi^{-1} H^{2}_{m}(\Pi_{+}) \supset 
  H^{2}_{m}(\Pi_{+})
$$
so the matrix function $\Theta_{22}^{-1} \psi^{-1}$ has no zeros (in 
the sense of its Smith-McMillan form) in $\Pi_{+}$.  As 
$\Theta_{22}^{-1}$ and $\psi^{-1}$ are invertible on the boundary $i 
{\mathbb R}$, we see that ${\rm wno} \det(\Theta_{22}^{-1} 
\psi^{-1})$ is well-defined and by the Argument Principle we have
\begin{align}
& -{\rm wno} \det \Theta_{22} - {\rm wno} \det(\psi) 
= {\rm wno} \det(\Theta_{22}^{-1} \psi^{-1}) \notag \\
& \quad \quad \quad \quad = 
 m_{Z}(\det(\Theta_{22}^{-1} \psi^{-1})) - m_{P}(\det(\Theta_{22}^{-1} 
 \psi^{-1})) \notag \\
 & \quad \quad \quad \quad = -m_{P}(\det(\Theta_{22}^{-1} \psi^{-1})) = 
 -m_{P}(\Theta_{22}^{-1} \psi^{-1}) \notag \\
 &\quad \quad \quad \quad  = -\dim P_{H^{2}_{m}(\Pi_{-})} \Theta_{22}^{-1} \psi^{-1} 
 H^{2}_{m}(\Pi_{+})
 \label{know1}
\end{align}
where $m_{Z}(S)$ is the total zero multiplicity of the rational matrix 
function $S$ over all zeros in $\Pi_{+}$.
On the other hand we have
\begin{equation}  \label{know2}
    \dim P_{H^{2}_{m}(\Pi_{-})} \begin{bmatrix} \Theta_{22}^{-1} \Theta_{21} & I_{m} \end{bmatrix} 
      H^{2}_{p+m}(\Pi_{+}) = m_{P}(\Theta_{22}^{-1} \Theta_{21}) = 
      \kappa
\end{equation}
where we make use of \eqref{toshow1} for the last step.  Combining 
\eqref{know1} and \eqref{know2} with \eqref{subspace-eq} finally 
brings us to \eqref{toshow2}.

\smallskip

In addition to the Beurling-Lax representation \eqref{cMBLrep} or 
\eqref{BLrep}, we also still have the Beurling-=Lax representation 
\eqref{cM-rep} for $\cM_{{\mathfrak D},-}$ with $\psi, \psi^{-1}$ 
given by \eqref{sep1} and \eqref{psi-inv}.
However, the condition \eqref{sol-rep} should be modified as follows:
\begin{itemize}
    \item {\em A meromorphic function $S \colon \Pi_{+} \to {\mathbb C}^{p \times m}$ 
has total pole multiplicity at most $\kappa$ over $\Pi_+$ and 
    satisfies the interpolation conditions \eqref{BTOAint1}--\eqref{BTOAint3} if and only if there 
    is an $m\times m$-matrix
valued function $\Psi$ analytic on $\Pi_+$ with $\det \Psi$ having no zeros on $\sigma(Z)\cup\sigma(W)$ 
and with $\kappa$ zeros in $\Pi_+$ such that 
 \begin{equation}  \label{sol-rep'}
  \begin{bmatrix} S \\ I_m \end{bmatrix}\psi^{-1}\Psi H^2_m(\Pi_+) 
      \subset \cM_{\mathfrak D}.
 \end{equation}}
 \end{itemize}
Now instead of \eqref{Sparam}, we have
  \begin{equation}  \label{Sparam'}
 \begin{bmatrix} S \\ I_{m} \end{bmatrix} \psi^{-1} \Psi =
\begin{bmatrix} \Theta_{11} & \Theta_{12} \\
     \Theta_{21} & \Theta_{22} \end{bmatrix} \begin{bmatrix} Q_{1}
     \\ Q_{2} \end{bmatrix}
\end{equation}
for some $(p+m) \times m$ matrix function $\sbm{Q_{1} \\ Q_{2}} \in H^2_{(p+m) \times m}(\Pi_{+})$.
Then we conclude from the $J$-unitarity of $\Theta$ on $i\mathbb R$ (exactly as in Section 2) that for 
almost all 
$\lam\in i\mathbb R$, the matrix $Q_2(\lam)$ is invertible whereas the matrix 
$G(\lam)=Q_1(\lam)Q_2(\lam)^{-1}$
is a contraction. The identity \eqref{bottom} arising from looking at 
the bottom component of \eqref{Sparam'} must be modified to read
$$
    \psi^{-1} \Psi = \Theta_{21} Q_{1} + \Theta_{22} Q_{2} = 
    \Theta_{22} (\Theta_{22}^{-1} \Theta_{21} G + I_{m}) Q_{2}
$$
leading to the modification of \eqref{wno'}:
\begin{align*}   
   & {\rm wno} \det \psi^{-1} + {\rm wno} \det \Psi = \notag \\
    & \quad \quad \quad \quad 
{\rm wno} \det \Theta_{22} + {\rm wno} \det (\Theta_{22}^{-1} \Theta_{21} G + 
    I_{m}) + \rm{ wno} \det Q_{2}.
\end{align*}
The identity \eqref{wno''} must be replaced by \eqref{toshow2}.  
Using that ${\rm wno} \det \Psi = \kappa$, with all these adjustments 
in place we still arrive at ${\rm wno} \det Q_{2} = 0$ and hence 
$Q_{2}$ has no zeros in $\Pi_+$ and $G$ extends inside $\Pi_+$ as a 
Schur-class function.
The representation \eqref{LFTparamkap} follows from \eqref{Sparam'} as well as the equality
$\Psi = \psi(\Theta_{21}G+\Theta_{22}) Q_{2}$. Since $\Psi$ has no zeros 
in $\sigma(Z) \cap \sigma(W)$ while $\psi(\Theta_{21}G+\Theta_{22})$ 
and $Q_2$ are analytic on all of $\Pi_+$, we see that $\psi(\Theta_{21}G+\Theta_{22})$ has no zeros in  
$\sigma(Z) \cap \sigma(W)$ as well.

\smallskip

Conversely, for any $G\in\mathcal S^{p\times m}(\Pi_+)$ such that $\psi(\Theta_{21}G+\Theta_{22})$ has 
no zeros on $\sigma(Z)\cup\sigma(W)$, we let
$$
\begin{bmatrix}S_1 \\ S_2\end{bmatrix}=\Theta\begin{bmatrix}G \\ I_m\end{bmatrix},\quad \Psi=\psi S_2, 
    \quad S=S_1S_2^{-1}, 
$$
so that 
$$
\begin{bmatrix}S \\ I_m\end{bmatrix}\psi^{-1}\Psi=\Theta\begin{bmatrix}G \\ I_m\end{bmatrix}.
$$
Since $\Theta$ is $J$-unitary on $i\mathbb R$ and $G$ is a Schur-class, 
it follows that $S(\lambda)$ is contractive for almost all $\lam\in i\mathbb R$.
Since $\det \Psi$ has no zeros on $\sigma(Z)\cup\sigma(W)$ and has $\kappa$ zeros in $\Pi_+$, due to 
the equalities
$$
{\rm wno}\det \Psi={\rm wno}\det \psi+{\rm wno}\det\Theta_{22}+
{\rm wno}\det(\Theta_{22}^{-1}\Theta_{21}G+I)=\kappa
$$
we see that $S$  satisfies the interpolation conditions \eqref{BTOAint1}--\eqref{BTOAint3} 
by the criterion \eqref{sol-rep'}
and has total pole multiplicity at most $\kappa$ in $\Pi_+$. However, since  
$\nu_-(\bGamma_{\mathfrak D})=\kappa$,
by the part of the sufficiency criterion already proved we know that $S$ must 
have at least $\kappa$ poles in $\Pi_+$.  Thus $S$ has exactly 
$\kappa$ poles in $\Pi_{+}$ and therefore is in the $\mathcal 
S_\kappa^{p\times m}(\Pi_{+})$-class.

\smallskip

\subsection{The Fundamental Matrix Inequality approach for the 
generalized Schur-class setting} 
The Fundamental Matrix Inequality method extends to the present setting as follows. 
As in the definite case,
we extend the interpolation data by an arbitrary finite set of 
additional full-matrix-value interpolation conditions to conclude that the kernel 
$\bGamma_{\mathfrak D}(z,\zeta)$ 
defined as in \eqref{sep8c} has at most $\kappa$ negative squares in 
$\Omega_S \setminus  \sigma(W)$. 
Since the constant block
(the matrix $\Gamma_{\mathfrak D}$) has $\kappa$ negative eigenvalues (counted with multiplicities), 
it follows that 
${\rm sp}_-\bGamma_{\mathfrak D}(z,\zeta)=\kappa$ which holds if and only if the Schur complement 
of $\Gamma_{\mathfrak D}$ in \eqref{sep8c} is a positive kernel on $\Omega_S 
\setminus \sigma(W)$: 
$$
\frac{I_{p} - S(z)S(\zeta)^*}{z+\overline{\zeta}}-
\begin{bmatrix}I_p & -S(z)\end{bmatrix}{\bf C}(z I-{\bf A})^{-1}\Gamma_{\mathfrak D}^{-1}
(\overline\zeta I-{\bf A}^*)^{-1}{\bf C}^*\begin{bmatrix}I_p \\ -S(\zeta)^*\end{bmatrix}\succeq 0.
$$
As in Section \ref{S:FMI}, the latter positivity condition can be written in the form \eqref{sep8d} 
(all we need is formula \eqref{ad1} which still holds true) and eventually, implies equality 
\eqref{sep8g} for some $G\in\mathcal S^{p\times m}(\Pi_+)$, which in turn, implies 
the representation \eqref{LFTparamkap}. However, establishing the necessity of the condition 
\eqref{LFTparampar} 
requires a good portion of extra work. Most of the known proofs are still based the Argument Principle 
(the winding number computations \cite{BGR} or the operator-valued 
version of Rouch\'e's theorem \cite{GS}).
For example, it can be shown that if $K$ is a $p\times m$ matrix-valued polynomial 
satisfying interpolation conditions 
\eqref{BTOAint1}--\eqref{BTOAint3} and if $\varphi$ is the inner function given 
(analogously to \eqref{sep1}) by
$$
\varphi(z)=I_p-X^*(zI+Z^*)^{-1}\widetilde{P}^{-1}X,
$$
where the positive definite matrix $\widetilde{P}$ is uniquely defined from the Lyapunov equation
$\widetilde{P}Z+Z^*\widetilde{P}=XX^*$, then the matrix function 
\begin{equation}
\Sigma:=\begin{bmatrix}\Sigma_{11} & \Sigma_{12}\\ \Sigma_{21} & \Sigma_{22}\end{bmatrix}=
\begin{bmatrix}\varphi^{-1} & -\varphi^{-1}K \\ 0 & \psi\end{bmatrix}
\begin{bmatrix}\Theta_{11} & \Theta_{12}\\ \Theta_{21} & \Theta_{22}\end{bmatrix}
\label{nov2}
\end{equation}
is analytic on $\Pi_+$. Let us observe that by the formulas \eqref{oct1}, \eqref{bigLySyl} and well known
properties of determinants,
\begin{align}
\det\Theta(\lam)&=\det\left(I- \bC (\lam I - \bA)^{-1} \bGamma_{\mathfrak D}^{-1} \bC J\right)\notag\\
&=\det\left(I- \bC J\bC (\lam I - \bA)^{-1} \bGamma_{\mathfrak D}^{-1}\right)\notag\\
&=\det(\bGamma_{\mathfrak D} (\lam I - \bA)+\bGamma_{\mathfrak D}\bA+\bA^*\bGamma_{\mathfrak D})\cdot
\det((\lam I - \bA)^{-1} \bGamma_{\mathfrak D}^{-1})\notag\\
&=\frac{\det(\lam I+\bA^*)}{\det(\lam I - \bA)}=\frac{\det(\lam I-Z)\det(\lam I+W^*)}
{\det(\lam I +Z^*)\det(\lam I-W)}.\notag
\end{align}
Similar computations show that 
$$
\det \psi(\lam)=\frac{\det(\lam I-W)}{\det(\lam I +W^*)},\quad 
\det \varphi(z)=\frac{\det(\lam I-Z)}{\det(\lam I +Z^*)}.
$$
Combining the three latter equalities with \eqref{nov2} gives $\det \Sigma(\lam)\equiv 1\neq 0$. 
Therefore, for 
$G\in\mathcal S^{p\times m}$, the total pole multiplicity of the function 
$$
\Upsilon=(\Sigma_{11}G+\Sigma_{12})(\Sigma_{21}G+\Sigma_{22})^{-1}
$$
is the same as the number of zeros of the denominator 
$$\Sigma_{21}G+\Sigma_{22}=
\psi(\Theta_{21}G+\Theta_{22}),
$$
that is $\kappa$, by the winding number argument.
On the other hand, since 
\begin{equation}  \label{Takagi-Sarason}
 S=K+\varphi \Upsilon\psi,
 \end{equation}
as can be seen from \eqref{LFTparamkap} and \eqref{nov2},
the total pole multiplicity of $S$ equals $\kappa$ if no poles of $\Upsilon$ 
occur at zeros of $\varphi$ and 
$\Psi$, that is, in $\sigma(Z)\cup\sigma(W)$.  We note that the form 
\eqref{Takagi-Sarason} where $K,\varphi,\psi$ are part of the data 
and $\Upsilon$ is a free meromorphic function with no poles on $i 
{\mathbb R}$ but $\kappa$ poles in $\Pi_{+}$ (including possibly at 
points of $\sigma(W) \cup \sigma(Z)$) corresponds to a 
variant of the interpolation problem 
\eqref{BTOAint1}, \eqref{BTOAint2}, \eqref{BTOAint3} sometimes 
called the {\em Takagi-Sarason problem} (see \cite[Chapter 19]{BGR}, \cite{Bolo04}).  It 
turns out that discarding the side-condition \eqref{LFTparampar} on 
the Schur-class free-parameter function $G$ leads to a 
parametrization of the set of all solutions of the Takagi-Sarason 
problem.

\subsection{Indefinite kernels and reproducing kernel Pontryagin 
spaces}  \label{S:RKHSkap}
From the formula \eqref{ad1} for $K_{\Theta,J}$, we see from 
the observability  assumption on $(\bC, \bA)$ (equivalently, the 
observability and controllability assumptions on $(\sbm{V \\ U}, W)$ 
and $(Z, \begin{bmatrix} X & -Y \end{bmatrix})$) that 
$$
 \nu_{-}(\bGamma_{\mathfrak D}) = {\rm sq}_{-}(K_{\Theta,J}).
$$
By the general theory of reproducing kernel Hilbert spaces sketched 
in Section \ref{S:RKHSkap}, it follows that $\cH(K_{\Theta,J})$ is a Pontryagin space with 
negative index $\nu_{-}(\cH(K_{\Theta,J})$ equal to the number of 
negative eigenvalues of $\bGamma_{\mathfrak D}$:
$$
  \nu_{-}(\cH(K_{\Theta,J})) = \nu_{-}(\bGamma_{\mathfrak D}).
$$
We conclude that the formula for $\kappa$ in statement (1) agrees with that in statement (2)
in Theorem \ref{T:BTOA-NPkap}.

\subsection{The Grassmannian/Kre\u{\i}n-space approach for the 
generalized Schur-class setting} \label{S:Grass-kappa}

The Grassmannian approach extends to to the present setting as follows. 
The suitable analog of Lemma \ref{L:maxneg} is the following:
\begin{lemma}  \label{L:maxnegkap}  
Suppose that $\cM$ is a closed subspace of a Kre\u{\i}n-space $\cK$ 
such that the $\cK$-relative orthogonal 
complement $\cM^{[\perp]}$ has negative signature equal $\kappa$. If $\cG$ is a negative
subspace of $\cM$, then $\cG$ has codimension at least $\kappa$ in any maximal negative 
subspace of $\cK$. Moreover, the codimension of such a $\cG$ in any maximal negative 
subspace of $\cK$ is equal to $\kappa$ if and only if $\cG$ is a maximal negative
subspace of $\cM$. 
\end{lemma}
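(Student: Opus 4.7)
The plan is to exploit a fundamental decomposition of the Pontryagin space $\cM^{[\perp]}$ together with the $J$-orthogonality of $\cM$ and $\cM^{[\perp]}$.  Fix a fundamental decomposition $\cM^{[\perp]} = \cN_+ [\dot +] \cN_-$ with $\dim \cN_- = \kappa$ and with $\cN_{\pm}$ strictly definite.  Given any negative $\cG \subset \cM$, for $g \in \cG$ and $n \in \cN_-$ one has $[g, n] = 0$ by the defining property of $\cM^{[\perp]}$, hence $[g + n, g + n] = [g, g] + [n, n] \le 0$; moreover $\cG \cap \cN_- \subset \cM \cap \cM^{[\perp]}$ is isotropic, and strict negativity of $\cN_-$ forces $\cG \cap \cN_- = \{0\}$.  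Thus $\cG [\dot +] \cN_-$ is a negative subspace of $\cK$ of dimension $\dim \cG + \kappa$.  Extending it by Zorn's lemma to a maximal negative subspace $\cG^{*}$ of $\cK$ yields $\dim(\cG^{*}/\cG) \ge \kappa$; invoking the standard fact that all maximal negative subspaces of $\cK$ have the same dimension, one concludes that every maximal negative subspace $\cG_{\max}$ of $\cK$ containing $\cG$ satisfies $\dim(\cG_{\max}/\cG) \ge \kappa$.

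For the equivalence I would first treat the nontrivial direction: if $\cG$ is maximal negative in $\cM$, then $\cG [\dot +] \cN_-$ is already maximal negative in $\cK$.  Suppose some $v \in \cK$ makes $(\cG + \cN_-) + \mathbb C v$ negative.  Using the decomposition $\cK = \cM [\dot +] \cM^{[\perp]}$ (available under the Nondegeneracy Assumption), write $v = m + n_+ + n_-$ with $m \in \cM$ and $n_{\pm} \in \cN_{\pm}$.  Replacing $v$ by $v - n_-$, we may assume $v = m + n_+$.  Evaluating the indefinite quadratic form at the element $g + tv$ (i.e.\ setting the $\cN_-$-component to zero) gives
$$
 [g + tv, g + tv] = [g + tm, g + tm] + |t|^{2}[n_+, n_+] \le 0,
$$
and since $[n_+, n_+] \ge 0$, this forces $[g + tm, g + tm] \le 0$ for every $g \in \cG$ and $t \in \mathbb C$, so $\cG + \mathbb C m$ is negative in $\cM$; maximality of $\cG$ in $\cM$ then gives $m \in \cG$.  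Substituting $g = -tm \in \cG$ collapses the element to $tn_+$ and the form to $|t|^{2}[n_+, n_+] \le 0$, and strict positivity of $\cN_+$ yields $n_+ = 0$.  Hence $v = m + n_- \in \cG + \cN_-$, contradicting the assumption $v \notin \cG + \cN_-$.

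The converse is an immediate dimension count: if $\cG + \mathbb C m \subset \cM$ is a strict negative extension of $\cG$, then $\cG + \mathbb C m + \cN_-$ is a negative subspace of $\cK$ of dimension $\dim \cG + 1 + \kappa$, whose maximal negative extension in $\cK$ has codimension at least $\kappa + 1$ over $\cG$, contradicting the assumed codimension $\kappa$.  The main obstacle is the regularity of $\cM$ in $\cK$, which underlies the decomposition $\cK = \cM [\dot +] \cM^{[\perp]}$ used in the splitting $v = m + n_+ + n_-$; in the paper's applications this is built into the Nondegeneracy Assumption of Section \ref{S:synthesis}, and without it one would first factor out the isotropic part $\cM \cap \cM^{[\perp]}$.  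A secondary point is that the ``common dimension of maximal negative subspaces'' of $\cK$ is invoked in its infinite-dimensional form via the graph-representation theorem for maximal negative subspaces of a Kre\u{\i}n space.
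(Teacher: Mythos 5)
The paper states this lemma without proof — it is presented only as ``the suitable analog of Lemma \ref{L:maxneg},'' and for Lemma \ref{L:maxneg} itself the paper simply cites \cite{BH} and \cite{Ball-Indiana} — so there is no in-paper argument to compare against. Your proof is the standard Kre\u{\i}n-space geometry argument and it is correct. Choosing a fundamental decomposition $\cM^{[\perp]} = \cN_+ [\dot +] \cN_-$ with $\dim \cN_- = \kappa$ and exploiting the $J$-orthogonality of $\cG \subset \cM$ against $\cN_- \subset \cM^{[\perp]}$ to form the negative subspace $\cG [\dot +] \cN_-$, then invoking the graph representation of maximal negative subspaces to get the well-definedness of codimension, gives the lower bound $\ge \kappa$ (and that part does not use regularity of $\cM$: one only needs $\cN_-$ strictly negative so that $\cG \cap \cN_- \subset \cM \cap \cM^{[\perp]}$ must vanish). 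For the bi-implication, the splitting $\cK = \cM [\dot +] \cM^{[\perp]}$ does the work in the direction you rightly identify as nontrivial, and your argument — project away $\cN_-$, use $[n_+,n_+]\ge 0$ to deduce $\cG + \BC m$ negative in $\cM$, then maximality of $\cG$ to force $m\in\cG$, then strict positivity of $\cN_+$ to force $n_+=0$ — is exactly the right one. You correctly flag the regularity of $\cM$ as the implicit hypothesis, but it is actually baked into the statement as used: for ``$\cM^{[\perp]}$ has negative signature $\kappa$'' to mean what your $\cN_\pm$ decomposition needs (that $\cM^{[\perp]}$ is a Pontryagin space of index $\kappa$), $\cM^{[\perp]}$ and hence $\cM = \cM^{[\perp][\perp]}$ must be regular, and in the paper's applications this is guaranteed by the invertibility of $\bGamma_{\mathfrak D}$ (the Nondegeneracy Assumption of Section \ref{S:synthesis}).
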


Let us now assume that we are given a $\Pi_{+}$-admissible 
interpolation data set $\mathfrak D$ with $\bGamma_{\mathfrak D}$ 
invertible. Then $\cM_{\mathfrak D}$  given by \eqref{cMrep} is a regular subspace of the 
Kre\u{\i}n space $L^{2}_{p+m}(i {\mathbb R})$ with the $J (= 
\sbm{I_{p} & 0 \\ 0 & - I_{m}}$)-inner product.  

With Lemma \ref{L:maxnegkap} in hand, we argue that 
$\nu_{-}(\cM_{\mathfrak D}^{[\perp \cK]})  \ge \kappa$ is necessary for the existence of 
$\cS_{\kappa}^{p \times m}(\Pi_{+})$-functions $S$ analytic on 
$\sigma(Z) \cup \sigma(W)$ satisfying the interpolation conditions
\eqref{BTOAint1}, \eqref{BTOAint2}, \eqref{BTOAint3}.  

\smallskip

\noindent
\textbf{Proof of necessity for the generalized Schur-class setting.}
If $S \in 
\cS_{\kappa'}^{p \times m}(\Pi_{+})$ is a solution of the 
interpolation conditions with $\kappa' \le \kappa$, then as in Section 
\ref{S:statekap}, there is a $m \times m$-matrix 
function $\Psi$ with $\det \Psi$ having no zeros in $\sigma(Z) \cup 
\sigma(W)$ and having $\kappa$ zeros in $\Pi_{+}$ so that the 
subspace $\cG_{S}: = \sbm{ S  \\ I_{m} } \psi^{-1} \Psi H^{2}_{m}(\Pi_{+})$ 
satisfies the inclusion \eqref{sol-rep'}.  We note that then 
$\cG_{S}$ is a negative subspace of $\cK$ and the fact that $\Psi$ has 
$\kappa$ zeros means that $\cG_{S}$ has codimension $\kappa$ in a 
maximal negative subspace of $\cK: = \sbm{ L^{2}_{p}(i {\mathbb R}) \\ 
H^{2}_{m}(\Pi_{+})}$.  As $\cG_{S}$ is also a subspace of 
$\cM_{\mathfrak D}$, it follows  by Lemma \ref{L:maxnegkap} that 
the negative signature of $\cM_{\mathfrak D}^{[ \perp \cK]}$ must be 
at least $\kappa$.  Thus $\nu_{-}(\cM_{\mathfrak D}^{[ \perp \cK]}) 
\ge \kappa$ is necessary for the existence of a solution $S$ of the 
interpolation problem in the class $\cS_{\kappa}^{p \times 
m}(\Pi_{+})$.  As part of the sufficiency direction, we shall show 
that conversely, if $\kappa = \nu_{-}(\cM^{[ \perp \cK]})$, then we 
can always find solutions $S$ of the interpolation conditions in the 
class $\cS_{\kappa}^{p \times m}(\Pi_{+})$.  This establishes 
the formula in statement (2) of Theorem \ref{T:BTOA-NPkap} as the 
minimal $\kappa$   such that solutions of the interpolation 
conditions can be found in class $\cS_{\kappa}^{p \times m}(\Pi_{+})$.

\smallskip

\noindent
\textbf{Proof of sufficiency for the generalized Schur-class setting.}
Let us suppose that $\bGamma_{\mathfrak D}$ is invertible and hence 
that $\cM_{\mathfrak D}$ is a regular subspace of the Kre\u{\i}n 
space $L^{2}_{p+m}(i {\mathbb R})$ with the $J = \sbm{ I_{p} & 0 \\ 0 
& -I_{m}}$-inner product.  By the results of 
\cite{BH}, there is a $J$-phase function $\Theta$ so that the 
Beurling-Lax representation \eqref{cMBLrep} holds (we avoid using the 
formula \eqref{oct1} for $\Theta$ at this stage).
We now assume that $\cM_{\mathfrak D}^{[ \perp 
\cK]}$ has negative signature $\nu_{-}(\cM_{\mathfrak D}^{[ \perp 
\cK]})$ equal to $\kappa$. We wish to verify the linear-fractional 
parametrization \eqref{LFTparamkap}--\eqref{LFTparampar} for the set 
of all $\cS_{\kappa}^{p\times m}(\Pi_{+})$-class solutions of the 
interpolation conditions.

Suppose first that $S$ is any $\cS_{\kappa}^{p \times m}(\Pi_{+})$-class 
solution of the interpolation conditions. By the graph-space criterion
for such solutions, there is a $m \times m$-matrix valued function 
$\Psi$ analytic on $\Pi_{+}$ with $\det \Psi$ having $\kappa$ zeros 
but none in $\sigma(Z) \cup \sigma(W)$, so that \eqref{sol-rep'} holds.
But then 
$$
\cG_{S}: = \sbm{ S \\ I_{m} } \psi^{-1} \Psi 
H^{2}_{m}(\Pi_{+})
$$ 
is a shift-invariant negative subspace of $\cK$
contained in $\cM_{\mathfrak D}$ and having codimension $\kappa$ in a maximal 
negative subspace of $\cK$.  It now follows from Lemma 
\ref{L:maxnegkap} that $\cG_{S}$ is maximal negative as a subspace of 
$\cM_{\mathfrak D}$.  As $\cG_{S}$ is also shift-invariant and 
multiplication by $\Theta$ is a Kre\u{\i}n-space isomorphism from 
$H^{2}_{p+m}(\Pi_{+})$ onto $\cM_{\mathfrak D}$, it follows that
$\cG_{S}$ is the image under multiplication by $\Theta$ of a 
shift-invariant $J$-maximal negative subspace of 
$H^{2}_{p+m}(\Pi_{+})$, i.e.,
\begin{equation}  \label{Srep'}
  \cG_{S}: = \begin{bmatrix} S \\ I_{m} \end{bmatrix} \cdot \psi^{-1} \Psi 
\cdot  H^{2}_{m}(\Pi_{+}) = \Theta \cdot \begin{bmatrix} G \\ I_{m} 
\end{bmatrix} \cdot H^{2}_{m}(\Pi_{+})
\end{equation}
for a $\cS^{p \times m}(\Pi_{+})$-class function $G$.  From the fact 
that $\Psi$ has no zeros in $\sigma(Z) \cup \sigma(W)$ one can read 
off from \eqref{Srep'} that $\psi (\Theta_{21} G + \Theta_{22})$ has no zeros in 
$\sigma(Z) \cup \sigma(W)$ and from the representation \eqref{Srep'} 
the linear-fractional representation \eqref{LFTparamkap} follows as 
well. From the subspace identity \eqref{Srep'} one can also read off 
that there is a $m \times m$ matrix function $Q$ with $Q^{\pm 1} \in 
H^{\infty}_{m \times m}(\Pi_{+})$ such that
$$
S \psi^{-1} \Psi = (\Theta_{11} G + \Theta_{12}) Q\quad\mbox{and}\quad
\psi^{-1} \Psi = (\Theta_{21} G + \Theta_{22})Q.
$$
Solving the second equation for $Q$ then gives
$$
  Q = (\Theta_{22} G + \Theta_{22})^{-1} \psi^{-1} \Psi.
$$
Substituting this back into the first equation and then solving for 
$S$ leads to the linear-fractional representation \eqref{LFTparamkap} 
for $S$.

\smallskip

Let now $G$ be any Schur-class function satisfying the additional 
constraint \eqref{LFTparampar}.  Since multiplication by $\Theta$ is 
a Kre\u{\i}n-space isomorphism from $H^{2}_{p+m}(\Pi_{+})$ to 
$\cM_{\mathfrak D}$ and $\sbm{ G \\ I_{m}} H^{2}_{m}(\Pi_{+})$  is a 
maximal negative shift-invariant subspace of $\cM_{\mathfrak D}$, it follows that 
$\Theta \cdot \sbm{ G \\ I_{m}} H^{2}_{m}(\Pi_{+})$ is maximal 
negative as a subspace of $\cM_{\mathfrak D}$.
By Lemma \ref{L:maxnegkap}, it follows that $\Theta \cdot \sbm{ G \\ 
I_{m}} H^{2}_{m}(\Pi_{+})$ has codimension $\kappa = 
\nu_{-}(\cM_{\mathfrak \cD}^{[ \perp] \cK})$ in a maximal negative 
subspace of $\cK$.  As $\Theta \cdot \sbm{ G \\ 
I_{m}} H^{2}_{m}(\Pi_{+})$ is also shift-invariant, it follows that
there must be a contractive matrix function $S$ on the unit circle 
and a  bounded analytic $m \times m$-matrix function $\Psi$ on 
$\Pi_{+}$ such that $\Psi$ has exactly $\kappa$ zeros in $\Pi_{+}$
and $\Psi$ is bounded and invertible on $i {\mathbb R}$ so that
\begin{equation}  \label{Srep}
\begin{bmatrix} S \\ I_{m} \end{bmatrix} \cdot \psi^{-1} \Psi 
   \cdot H^{2}_{m}(\Pi_{+}) = \Theta \cdot \begin{bmatrix} G \\ I 
\end{bmatrix} \cdot  H^{2}_{m}(\Pi_{+}).
\end{equation}
In particular, $\psi^{-1} \Psi\cdot  H^{2}_{m}(\Pi_{+}) \subset 
(\Theta_{21} G + \Theta_{22}) \cdot H^{2}_{m}(\Pi_{+})$, so
there is a $Q \in H^{\infty}_{p \times m}(\Pi_{+})$ so that
$ \psi^{-1} \Psi = (\theta_{21} G + \Theta_{22}) Q$, i.e., so that
$$
  \Psi = \psi (\Theta_{21} G + \Theta_{22}) Q.
$$
As $\psi (\Theta_{21} G + \Theta_{22})$ has no zeros in $\sigma(Z) 
\cup \sigma(W)$ by assumption, it follows that none of the zeros of 
$\Psi$ are in $\sigma(Z) \cup \sigma(W)$.  By the criterion 
\eqref{sol-rep'} for $\cS_{\kappa'}^{p \times m}(\Pi_{+})$-class 
solutions of the interpolation conditions with $\kappa' \le \kappa$, 
we read off from \eqref{Srep} that $S$ so constructed is a 
$\cS_{\kappa'}^{p \times m}(\Pi_{+})$-class solution of the 
interpolation conditions for some $\kappa' \le \kappa$. However, from 
the proof of the necessity direction already discussed, it follows 
that necessarily $\kappa' \ge \kappa$.  Thus $S$ so constructed is a 
$\cS_{\kappa'}^{p \times m}(\Pi_{+})$-class solution of the 
interpolation conditions.  The subspace identity \eqref{Srep} leads to
the formula \eqref{LFTparamkap} for $S$ in terms of $G$ just as in 
the previous paragraph.

\begin{remark} \label{R:summary}
We conclude that the Grassmannian approach extends to the generalized 
Schur-class setting.  As in the classical Schur-class case,  
one can avoid the elaborate winding-number argument used in Section 
\ref{S:statekap} by using Kre\u{\i}n-space geometry (namely, the fact 
the a Kre\u{\i}n-space isomorphism maps maximal negative 
subspaces to maximal negative subspaces combined with Lemma 
\ref{L:maxnegkap}),  unlike the story for the Fundamental Matrix Inequality 
Potapov approach, which avoids the winding number argument in an 
elegant way for the definite case but appears to still require such 
an argument for the indefinite generalized Schur-class setting.
\end{remark}

\subsection{State-space versus Grassmannian/Kre\u{\i}n-space-geometry solution criteria in the generalized 
Schur-class setting}  \label{S:synthesiskap}
The work of the previous subsections shows that each of conditions 
(1) and (2) in Theorem \ref{T:BTOA-NPkap} is equivalent to the 
existence of $\cS_{\kappa}^{p \times m}(\Pi_{+})$-class solutions f 
the interpolation conditions \eqref{BTOAint1}--\eqref{BTOAint3}, 
and that condition (2) is equivalent to condition 
(1).  It follows that conditions (1), (2), (3) are all equivalent to 
each other.  Here we wish to see this latter fact directly in a more 
concrete from, analogously to what is done in Section 
\ref{S:synthesis} above for the classical Schur-class setting.

\smallskip

As in Section \ref{S:synthesis}, we impose an assumption a little 
stronger than the condition that $\bGamma_{\mathfrak D}$ be 
invertible, namely, the Nondegeneracy Assumption:  {\em 
$\cM_{\mathfrak D}$, $\cM_{\mathfrak D} \cap H^{2}_{p+m}(\Pi_{+})$, 
and $\cM_{\mathfrak D} \cap H^{2}_{p+m}(\Pi_{-})$ are all regular 
subspaces of $L^{2}_{p+m}(i {\mathbb R})$ (with the $J\, (= \sbm{ I_{p} 
& 0 \\ 0 & -I_{m} })$-inner product).}  Then Lemmas \ref{L:cMDperp} and 
\ref{L:decom} go through with no change.  Lemma \ref{L:MperpK} goes 
through, but with the {\em in particular} statement generalized to 
the following (here $\nu_{-}(\cL)$ refers to negative signature of 
the given subspace $\cL$ of $L^{2}_{p+m}(i {\mathbb R})$ with respect 
to the $J$-inner product):
\begin{itemize}
    \item {\em In particular, $\nu_{-}(\cM_{\mathfrak D}^{[ \perp 
    \cK]}) = \kappa$ if and only if}
  $$
  \nu_{-}(\cM_{\mathfrak D}^{[\perp \cK]})_{0}) = \kappa
  $$
{\em if and only if}
$$
  \nu_{-}({\mathbb G}_{T^{[*]}}) + \nu_{-}((\cM_{\mathfrak D}^{[ 
  \perp]})_{1}) = \kappa.
$$
    \end{itemize}
Lemma \ref{L:posnegsubspaces} has the more general form:
\begin{enumerate}
    \item $\nu_{-}({\mathbb G}_{T^{[*]}}) =\kappa$ {\em if and only if }
    $\nu_{-}(I + T T^{[*]}) = \kappa$  ({\em where $I + T T^{[*]}$ is 
    considered as an operator on } $\operatorname{Ran} J (\cC_{Z, 
    \sbm{ X & -Y}})^{*}$).
    
    \item $\nu_{-}((\cM_{\mathfrak D}^{[ \perp]})_{1}) = 
   \nu_{-}(\operatorname{Ran} \cO_{\sbm{V \\ U}, W})$.
   
   \item $\nu_{-}(\widehat {\mathbb G}_{T}) = \nu_{-}( I + T^{[*]} 
   T)$ ({\em where } $I + T^{[*]}T$ {\em is considered as an operator 
   on } $\operatorname{Ran} \cO_{\sbm{ V \\ U}, W}$).
   
   \item $\nu_{+}(\cM_{{\mathfrak D},1}) = \nu_{-}(\operatorname{Ran} 
   J (\cC_{Z, \sbm{ X & -Y}})^{*})$.
    \end{enumerate}
 Lemma \ref{L:GammaD-factored} is already in general form but its 
 corollary, namely Lemma \ref{L:GammaDpos}, can be given in a more 
 general form:
 \begin{itemize}
     \item {\em The following conditions are equivalent:}
\begin{enumerate}
    \item $\nu_{-}(\bGamma_{\mathfrak D}) = \kappa$.
    
     \item $\nu_{+}(\operatorname{Ran} \cO_{\sbm{V \\ U}, W}) + 
    \nu_{-}({\mathbb G}_{T^{[*]}}) = \kappa.$
    
    \item $\nu_{-}( \operatorname{Ran} J (\cC_{Z, \sbm{ X & 
    -Y}})^{*} )  + \nu_{+}(\widehat{\mathbb G}_{T}) = \kappa.$
 \end{enumerate}
 \end{itemize}
 Putting the pieces together, we have the following chain of 
 reasoning.  By the generalized version of Lemma \ref{L:MperpK}, we 
 have
 \begin{equation}   \label{1}
 \nu_{-}(\cM_{\mathfrak D})^{[ \perp \cK]} = \nu_{-}({\mathbb 
 G}_{T^{[*]}}) + \nu_{-}((\cM_{\mathfrak D}^{[ \perp]})_{1})
 \end{equation}
 where, by the generalized version of Lemma \ref{L:posnegsubspaces} 
 part (2),
 $$
 \nu_{-}((\cM_{\mathfrak D}^{[\perp]})_{1}) = 
 \nu_{-}(\operatorname{Ran} \cO_{\sbm{V \\U}, W}).
 $$
 Thus \eqref{1} becomes
 $$
 \nu_{-}(\cM_{\mathfrak D})^{[ \perp \cK]} = \nu_{-}({\mathbb G}_{T^{[*]}}) 
 + \nu_{-}(\operatorname{Ran} \cO_{\sbm{V \\U}, W}).
 $$
 By (1) $\Leftrightarrow$ (2) in the generalized Lemma 
 \ref{L:GammaD-factored}, we get
 $$
 \nu_{-}(\cM_{\mathfrak D}^{[ \perp \cK]}) = 
 \nu_{-}(\bGamma_{\mathfrak D})
 $$
 which gives us (1) $\Leftrightarrow$ (2) in Theorem 
 \ref{T:BTOA-NPkap}.
 
\smallskip

 To give a direct proof of (1) $\Leftrightarrow$ (3) in Theorem 
 \ref{T:BTOA-NPkap}, we note the concrete identification 
 \eqref{HKThetaJid} of the space $\cH(K_{\Theta,J})$ (with 
 $J$-inner product on $\operatorname{Ran} (P^{J}H^{2}_{p+m}(\Pi_{+}) 
 - P^{J}_{\cM_{\mathfrak D}})$ which again leads to the more compact 
 identification \eqref{HKThetaJ} from which we immediately see that
 $$
 \nu_{-}(\cH(K_{\Theta,J})) = \nu_{-}(\operatorname{Ran} J(\cC_{Z, 
 \sbm{ X & -Y}})^{*}) + \nu_{+}(\widehat {\mathbb G}_{T}).
 $$
 By (1) $\Leftrightarrow$ (3) in the generalized Lemma 
 \ref{L:GammaDpos}, this last expression is equal to 
 $\nu_{-}(\bGamma_{\mathfrak D})$, and we have our more concrete 
 direct proof of the equivalence of conditions (1) and (3) in Theorem 
 \ref{T:BTOA-NPkap}.

\end{document}